\newtheorem{theorem}{Theorem}
\newtheorem{corollary}{Corollary}
\newtheorem{proposition}{Proposition}
\newtheorem{lemma}{Lemma}
\newtheorem{remark}{Remark}
\begin{document}

\numberwithin{equation}{section}
\numberwithin{theorem}{section}

\title{THE GENERALIZED JOIN THE SHORTEST ORBIT QUEUE SYSTEM: STABILITY, EXACT TAIL ASYMPTOTICS  AND STATIONARY APPROXIMATIONS}
\author{IOANNIS DIMITRIOU}
\affil{Department of Mathematics, University of Patras, 26504 Patras, Greece}
\affil[ ]{\textit {E-mail: idimit@math.upatras.gr}}
\renewcommand\Authands{ and }
\providecommand{\keywords}[1]{\textbf{\textit{Keywords---}} #1}
\maketitle

\begin{abstract}
We introduce the \textit{generalized join the shortest queue model with retrials} and two infinite capacity orbit queues. Three independent Poisson streams of jobs, namely a \textit{smart}, and two \textit{dedicated} streams, flow into a single server
system, which can hold at most one job. Arriving jobs that find the server occupied are routed to the orbits as follows: Blocked jobs from the \textit{smart} stream are routed to the shortest orbit queue, and in case of a tie, they choose an orbit randomly. Blocked jobs from the \textit{dedicated} streams are routed directly to their orbits. Orbiting jobs retry to connect with the server at different retrial rates, i.e., heterogeneous orbit queues. Applications of such a system are found in the modelling of wireless cooperative networks. We are interested in the asymptotic behaviour of the stationary distribution of this model, provided that the system is stable. More precisely, we investigate the conditions under which the tail asymptotic of the minimum orbit queue length is exactly geometric. Moreover, we apply a heuristic asymptotic approach to obtain approximations of the steady-state joint orbit queue-length distribution. Useful numerical examples are presented, and shown that the results obtained through the asymptotic analysis and the heuristic approach agreed.
\end{abstract}

\keywords{Generalized Join the Shortest Orbit Queue; Retrials; Exact tail asymptotics; Stationary approximations; Stability.}

\section{Introduction}
In this work, we focus on the asymptotic stationary behaviour of the \textit{generalized join the shortest orbit queue} (GJSOQ) policy with retrials. This model is a natural generalization of the join the shortest orbit queue system, recently introduced in \cite{dimisq}, by considering both non-identical retrial rates, and additional dedicated arrival streams that route jobs directly to the orbit queues if an arriving job finds the server busy; for more information about developments on the analysis of retrial queues, see the seminal books in \cite{falin,artalejo}. 

We consider a single server retrial system with two infinite capacity orbit queues accepting three independent arrival streams. The service station can handle at most one job. Arriving jobs are directed initially to the service station. An arriving job (of either stream) that finds the server idle starts service immediately. In case the server is busy upon a job's arrival, the blocked arriving job is routed to an orbit queue as follows: two of the arrival streams are \textit{dedicated} to each orbit queue, i.e., an arriving job of stream $m$ that finds the server busy, it joins orbit queue $m$, $m=1,2$. An arriving job of the third stream, i.e., the \textit{smart} stream, that finds the server busy, it joins the least loaded (i.e., the shorter) orbit queue, and in case of a tie, the \textit{smart} job joins either orbit queue with probability $1/2$. Orbiting jobs retry independently to connect with the service station after a random time period (and according to the \textit{constant} retrial policy) that depends on the type of the orbit queue. Note that the model at hand is described by a non-homogeneous Markov modulated two-dimensional random walk; see Figure \ref{unif}. Our main concern is to investigate the asymptotic behaviour of the GJSOQ system, and to derive stationary approximations of its joint orbit queue length distribution.
\subsection{Related work}
The join-the-shortest-queue (JSQ) policy is widely used for load-balancing in stochastic networks. Yet the analytic derivation of the stationary distribution is known to be far from trivial.

The stationary behaviour of the standard (i.e., non-modulated, without retrials) two-dimensional JSQ problem had initially studied in \cite{haight}, and further investigated in \cite{king}, which was shown that the minimum queue length has exactly geometric asymptotics using the generating function approach. In \cite{coh1,fay1}, the authors presented a robust mathematical approach through generating functions and complex variables arguments to study its stationary behaviour. However, this approach does not provide explicit expressions for the equilibrium distribution, and it is not so useful for numerical computations. The compensation method (CM), introduced in \cite{ad0,ad2,ad3}, provides an elegant and direct method to obtain explicitly the equilibrium joint queue-length distribution as an infinite series of product form terms, by solving directly the equilibrium equations. 

Numerical/approximation methods were also applied: see the power series algorithm (PSA), e.g., \cite{blanc1987}, and the matrix geometric method; see e.g., \cite{gerts,raoposner}, for which connections with CM was recently reported in \cite{stella}. PSA is numerically satisfactory for relatively lower dimensional models, although, the theoretical foundation of this method is still incomplete. By expressing the equilibrium distribution as a power series in some variable based on the model parameters (usually the load), PSA transforms the balance equations into a recursively solvable set of equations by adding one dimension to the state space. For the (non-modulated) multidimensional JSQ model the authors in \cite{Houtum} constructed upper and lower bounds for any performance measure based on two related systems that were easier to analyse. For a comparative analysis of the methods used for the analysis of multidimensional queueing models (including JSQ) see \cite{Onno}.

The stationary behaviour of a two-queue system under the JSQ policy with Erlang arrivals was investigated in \cite{ad1} by using the CM. The queueing model in \cite{ad1} is described by a multilayer random walk in the quarter plane. Quite recently, in \cite{dimisq} the CM was applied to investigate the joint stationary distribution for a Markov-modulated random walk in the quarter plane, which describes a standard (i.e., without dedicated traffic streams) symmetric join the shortest orbit queue system with retrials. For such a model, was also shown that the tail decay rate of the minimum orbit queue has an exactly geometric asymptotic.

Since the exact solutions discussed above are extremely complicated, it is useful to evaluate these
expressions in certain limiting cases, in order to gain more insight into the qualitative structure of
the particular model. Asymptotic formulas often clearly show the dependence of the solutions on
the various variables/parameters in the problem, whereas the full exact expressions may be difficult
to interpret in terms of the underlying model. Clearly, an asymptotic formula can never contain
as much quantitative (numerical) information as an exact answer, but it provides reasonably
accurate numerical results at a greatly reduced computational cost. 

The tail asymptotic behaviour for the two-queue case have been extensively studied. For the standard JSQ, i.e., without retrials, the problem for the case of homogeneous servers was answered in \cite{king}, while for the case of heterogeneous servers in \cite{taka}. The behaviour of the standard generalized JSQ (GJSQ) problem was investigated by using a random walk structure in \cite{foley}, and by using a quasi birth-death (QBD) formulation in \cite{limiya}; see also \cite{kurksuh} which extends Malyshev's approach \cite{malyas} to the GJSQ paradigm. However, those two papers have not completely solved the tail asymptotic problem, since they focus on the so-called \textit{strongly pooled} condition. By using the Markov additive approach and the optimization technique developed in \cite{miyatail}, the author in \cite{miya2sided} completely characterized the weak decay rates in terms of the transition probabilities and provided a complete solution. The decay rate
of stationary probabilities was also analyzed in heavy traffic, and via large deviations in \cite{turn1}, \cite{turn2}, respectively; see also \cite{puha}. We further mention \cite{kness}, where the authors obtain heuristics from the balance equations. 

In \cite{sakuma}, the authors studied the tail decay problem of JSQ system with two queues and PH-type arrivals; see also in \cite{sakuma2} for the case of more than two queues and threshold jockeying. These works investigate the tail behaviour of non-homogeneous multilayered random walks. We further mention the recent work in \cite{oza}, which focused on a thorough investigation of the tail behaviour of space homogeneous two-dimensional skip-free Markov modulated reflecting random walk; see also \cite{miya}. Using a QBD and a reflecting random walk formulation the authors in \cite{sakuma3} showed that the tail asymptotic of the minimum queue length in a Markovian system of $k\geq 3$ parallel queues is exactly geometric.
\subsection{Contribution}
This work considers for the first time the \textit{generalized join the shortest queue policy with retrials}. We use this model as a vehicle to investigate the asymptotic behaviour of non-homogeneous Markov-modulated two dimensional random walks, and with a particular interest on a modulation that allows a completely tractable analysis. We first investigate the stability conditions. Then, we focus on the tail asymptotic behaviour of the stationary distribution, and show that under specific conditions the tail asymptotic of the minimum orbit queue length is exactly geometric. Finally, by using directly the equilibrium equations, we provide an asymptotic approach to obtain
approximations to the steady-state joint orbit queue length distribution for both states of the server. The approximation results agreed with the results obtained by the asymptotic analysis. 
\paragraph{Fundamental Contribution}
In \cite{dimisq} we provided exact expressions for the stationary distribution of the standard (i.e., with \textit{smart} arrival stream and \textit{no dedicated} arrival streams), symmetric (i.e., identical retrial rates) JSOQ system by using the CM. To our best knowledge, the work in \cite{dimisq} is the only work in the related literature that provided analytic results regarding the stationary behaviour of the JSQ policy with retrials. It is well-known from the related literature on the standard (i.e., without retrials) JSQ model with two queues that under the presence of the dedicated streams, the CM collapses. This is because under the presence of dedicated traffic, the corresponding two dimensional random walk allows transitions to the \textit{North}, and to the \textit{East}. A similar situation arises also in our case, where in the corresponding Markov modulated two-dimensional random walk we have a similar behaviour; see Figures \ref{unif}, \ref{struc}. Thus, this work provides the only analytic results for the generalized JSQ problem with retrials available in the literature.

Note that our system is described by a non-homogeneous two-dimensional random walk modulated by a two-state Markov process. In such a case, the phase process represents the state of the server, and affects the evolution of the level process, i.e, the orbit queue lengths in two ways: i) The rates at which certain transitions in the level process occur depend on the state of the phase process; see Figure \ref{unifx}. Thus, a change in the phase might not immediately trigger a transition of the level process,
but changes its dynamics (indirect interaction). ii) A phase change does trigger an immediate transition of the level process (direct interaction).

For such a queueing system, we show that the tail asymptotic of the minimum orbit queue length for fixed values of their difference, and server's state is exactly geometric. To accomplish this task we transform the original process to a Markov modulated random walk in the half plane. Then, we focused on the censored Markov chain referring to the busy states, which is now a standard two-dimensional random walk in the half plane, and studied the tail asymptotic behaviour. Using a relation among idle and busy states we also study the tail behaviour for the idle states. To our best knowledge there is no other analytic study on the tail asymptotics properties of Markov modulated random walks in the half plane.

Moreover, we provide a simple heuristic approach to approximate the stationary distribution, valid when one of the orbit queue lengths is large, by distinguishing the analysis between the symmetric and asymmetric cases (this is because the asymmetric case reveals additional technical difficulties compared with the symmetric case; see subsections \ref{prsym}, \ref{prasym}). Our derived theoretical results through the heuristic approach agreed with those derived by the tail asymptotic analysis. Moreover, the advantage of our approach is that we cope directly with the equilibrium equations, without using generating functions or diffusion approximations.

Stability conditions are also investigated. In particular, having in mind that the orbit queues grow only when the server is busy, we focused on the censored chain at busy states, and provide necessary and sufficient conditions for the ergodicity of the censored chain by using Foster-Lyapunov arguments. We conjecture that these conditions are also necessary and sufficient for the original process. Simulation experiments indicate that our conjecture is true but a more formal justification is needed. We postponed the formal proof in a future work.
\paragraph{Application oriented contribution}
Besides its theoretical interest, the queueing model at hand has interesting applications in the modelling of wireless relay-assisted cooperative networks. Such systems operate as follows: There is a finite number of source users that transmit packets (i.e., the arrival streams) to a common destination node (i.e., the single service station), and a finite number of relay nodes (i.e., the orbit queues) that assist the source users by retransmitting their blocked packets, i.e., the packets that cannot access upon arrival the destination node; see e.g., \cite{dim3,PappasTWC2015}. We may consider here a \textit{smart} source user (possibly the user of highest importance that transmits priority packets) that employs the JSQ protocol (i.e., the cooperation strategy among the \textit{smart} source and the relays, under which, the \textit{smart} user chooses to forward its blocked packet to the least loaded relay node), and two \textit{dedicated} source users (that transmit packets of lower priority, or packets that can only handled by the specific relay). This work serves as a major step towards the analysis of even general retrial models operating under the JSQ policy.

The rest of the paper is organized as follows. In Section \ref{model}, we present the mathematical model in detail. Stability conditions are investigated in Section \ref{stabo}. The main results of this work are presented in Section \ref{main}. More precisely, in subsection \ref{taildecay} we present the exact geometric behaviour in the minimum orbit queue length direction under specific conditions (see Theorem \ref{decay} and Corollary \ref{corol}), while in subsection \ref{heur} we present explicit asymptotic expressions for approximating the stationary distribution of the model; see Lemmas \ref{sym}, \ref{asym}. The proofs of the main results are presented in section \ref{prmain}. Our theoretical findings are validated through detailed numerical results that are presented in Section \ref{num}.
\section{System model}\label{model}
We consider a single-server retrial system with two infinite capacity orbit queues. Three independent Poisson streams of jobs, say $S_{m}$, $m=0,1,2$, flow into the single-server service
system, namely the \textit{smart} stream $S_{0}$, and the \textit{dedicated} streams $S_{m}$, $m=1,2$. The arrival rate of stream $S_{m}$ is $\lambda_{m}$, $m=0,1,2,$, and let $\lambda:=\lambda_{0}+\lambda_{1}+\lambda_{2}$. The service system can hold at most one job. The required service time of each job is independent of
its type and is exponentially distributed with rate $\mu$. If an arriving type-$m$, $m=1,2,$ job finds
the (main) server busy, it is routed to a dedicated retrial (orbit) queue that operates as an $\cdot/M/1/\infty$ queue. An arriving type-$0$ job (i.e., the smart job) that finds the server busy, it is routed to the shortest orbit queue (i.e., the least loaded orbit), while in case of a tie, it is routed to either orbit with probability 1/2. Orbiting jobs try to access the server according to a constant retrial policy (i.e., orbits behave as $\cdot/M/1/\infty$ queues). In particular, the orbit queue $m$ attempts to retransmit a job (if any) to the main service system at a Poisson rate of $\alpha_{m}$, $m=1,2$.

Let $N_{m}(t)$ be the number of jobs in orbit queue $m$, $m=1,2$, at time $t$, and $C(t)$ be the state of the server, i.e., $C(t)=1$, when the server is busy, and $C(t)=0$ when it is idle at time $t$, respectively. Then, $X(t)=\{(N_{1}(t),N_{2}(t),C(t));t\geq 0\}$ is an irreducible Markov process with state space $S=\mathbb{N}_{0}\times\mathbb{N}_{0}\times\{0,1\}$, where $\mathbb{N}_{0}=\{0,1,2,\ldots\}$. Let $X=\{(N_{1},N_{2},C)\}$ the stationary version of $\{X(t);t\geq0\}$, and define the stationary probabilities
\begin{displaymath}
p_{i,j}(k)=\lim_{t\to\infty}\mathbb{P}((N_{1}(t),N_{2}(t),C(t))=(i,j,k))=\mathbb{P}((N_{1},N_{2},C)=(i,j,k)),
\end{displaymath}
for $(i,j,k)\in S$. The equilibrium equations are
\begin{eqnarray}
(\lambda+\alpha_{1}1_{\{i>0\}}+\alpha_{2}1_{\{j>0\}})p_{i,j}(0)=\mu p_{i,j}(1),\,i,j\geq 0,\label{e1}\vspace{2mm}\\
(\lambda+\mu)p_{i,j}(1)=[\lambda_{0}H(i-j+1)+\lambda_{2}]p_{i,j-1}(1)+[\lambda_{0}H(j-i+1)+\lambda_{1}]p_{i-1,j}(1)\nonumber\\
+\lambda p_{i,j}(0)+\alpha_{1}p_{i+1,j}(0)+\alpha_{2}p_{i,j+1}(0),\,i,j\geq 1,\label{e2}\vspace{2mm}\\
(\lambda+\mu)p_{i,0}(1)=\lambda p_{i,0}(0)+\lambda_{1}p_{i-1,0}(1)+\alpha_{1}p_{i+1,0}(0)+\alpha_{2}p_{i,1}(0),\,i\geq 2,\label{e3}\vspace{2mm}\\
(\lambda+\mu)p_{0,j}(1)=\lambda p_{0,j}(0)+\lambda_{2}p_{0,j-1}(1)+\alpha_{1}p_{1,j}(0)+\alpha_{2}p_{0,j+1}(0),\,j\geq 2,\label{e4}\vspace{2mm}\\
(\lambda+\mu)p_{1,0}(1)=\lambda p_{1,0}(0)+\alpha_{1}p_{2,0}(0)+\alpha_{2}p_{1,1}(0)+(\lambda_{1}+\frac{\lambda_{0}}{2})p_{0,0}(1),\label{e5}\vspace{2mm}\\
(\lambda+\mu)p_{0,1}(1)=\lambda p_{0,1}(0)+\alpha_{1}p_{1,1}(0)+\alpha_{2}p_{0,2}(0)+(\lambda_{2}+\frac{\lambda_{0}}{2})p_{0,0}(1),\label{e6}\vspace{2mm}\\
(\lambda+\mu)p_{0,0}(1)=\lambda p_{0,0}(0)+\alpha_{1}p_{1,0}(0)+\alpha_{2}p_{0,1}(0),\label{e7}
\end{eqnarray}
with the normalization condition $\sum_{i=0}^{\infty}\sum_{j=0}^{\infty}(p_{i,j}(0)+p_{i,j}(1))=1$, and where
\begin{displaymath}
H(n)=\left\{\begin{array}{ll}
1,&n\geq 1,\\
\frac{1}{2},&n=0,\\
0,&n\leq-1.
\end{array}\right.
\end{displaymath}

For reasons that will become clear in the following sections, we consider the corresponding uniformized discrete time Markov chain through the uniformization technique. Since the total transition rate from each state is bounded by $\lambda+\mu+\alpha_{1}+\alpha_{2}$, we can construct by uniformization a discrete time Markov chain with the same stationary distribution as that of $\{X(t);t\geq 0\}$. Without loss of generality, let the uniformization parameter
$\theta =\lambda+\mu+\alpha_{1}+\alpha_{2}= 1$. The uniformized Markov chain $X(n)=\{(N_{1,n},N_{2,n},C_{n})\}$ of $\{X(t);t\geq 0\}$ has six regions of spatial homogeneity: two angles, say $r_{1}=\{i>j> 0,k=0,1\}$ and $r_{2}=\{j>i> 0,k=0,1\}$, three rays, say $h=\{j=0,i>0,k=0,1\}$, $v=\{i=0,j>0,k=0,1\}$, $d=\{i=j>0,k=0,1\}$ and the points $\{(0,0,k), k=0,1\}$. Then, the matrix transition diagram partitioned according to the state of the
server, is depicted in Figure \ref{unif}, where
\begin{displaymath}
\begin{array}{c}
A_{1,0}^{(r_1)}=\begin{pmatrix}
0&0\\
0&\lambda_{1}
\end{pmatrix}=A_{1,0}^{(h)}=A_{1,0}^{(0)},\,A_{1,0}^{(r_2)}=\begin{pmatrix}
0&0\\
0&\lambda_{1}+\lambda_{0}
\end{pmatrix}=A_{1,0}^{(v)},\,\,A_{1,0}^{(d)}=\begin{pmatrix}
0&0\\
0&\lambda_{1}+\lambda_{0}/2
\end{pmatrix},\\
A_{0,1}^{(r_2)}=\begin{pmatrix}
0&0\\
0&\lambda_{2}
\end{pmatrix}=A_{0,1}^{(v)}=A_{0,1}^{(0)},\,A_{0,1}^{(r_1)}=\begin{pmatrix}
0&0\\
0&\lambda_{0}+\lambda_{2}
\end{pmatrix}=A_{0,1}^{(h)},\,A_{0,1}^{(d)}=\begin{pmatrix}
0&0\\
0&\lambda_{0}/2+\lambda_{2}
\end{pmatrix},\\
A_{0,-1}^{(r_2)}=\begin{pmatrix}
0&\alpha_{2}\\
0&0
\end{pmatrix}=A_{0,-1}^{(v)}=A_{0,-1}^{(d)}=A_{0,-1}^{(r_1)},\,A_{0,0}^{(v)}=\begin{pmatrix}
\mu+\alpha_{1}&\lambda\\
\mu&\alpha_{1}+\alpha_{2}
\end{pmatrix},\\
A_{-1,0}^{(r_1)}=\begin{pmatrix}
0&\alpha_{1}\\
0&0
\end{pmatrix}=A_{-1,0}^{(h)}=A_{-1,0}^{(d)}=A_{-1,0}^{(r_2)},\,A_{0,0}^{(h)}=\begin{pmatrix}
\mu+\alpha_{2}&\lambda\\
\mu&\alpha_{1}+\alpha_{2}
\end{pmatrix},\\
A_{0,0}^{(r_1)}=\begin{pmatrix}
\mu&\lambda\\
\mu&\alpha_{1}+\alpha_{2}
\end{pmatrix}=A_{0,0}^{(r_2)}=A_{0,0}^{(d)},\,A_{0,0}^{(0)}=\begin{pmatrix}
\mu+\alpha_{1}+\alpha_{2}&\lambda\\
\mu&\alpha_{1}+\alpha_{2}
\end{pmatrix}.
\end{array}
\end{displaymath}
\begin{figure}[H]
\centering
\includegraphics[scale=0.65]{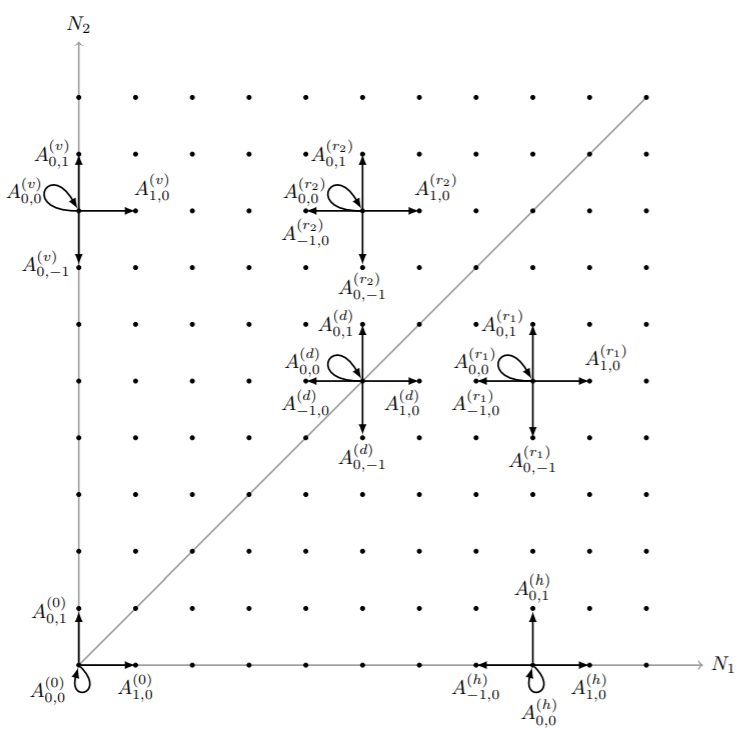}
\caption{Matrix transition diagram.}
\label{unif}
\end{figure}
An illustration of the transitions from a state belonging to the angle $r_{1}$ (e.g., from the state $(4,2,k)$, $k=0,1$), and from a state belonging to the ray $d$ (e.g., from the state $(3,3,k)$, $k=0,1$), is given in Figure \ref{unifx}, left and right, respectively.
\begin{figure}[H]
\centering
\includegraphics[scale=0.5]{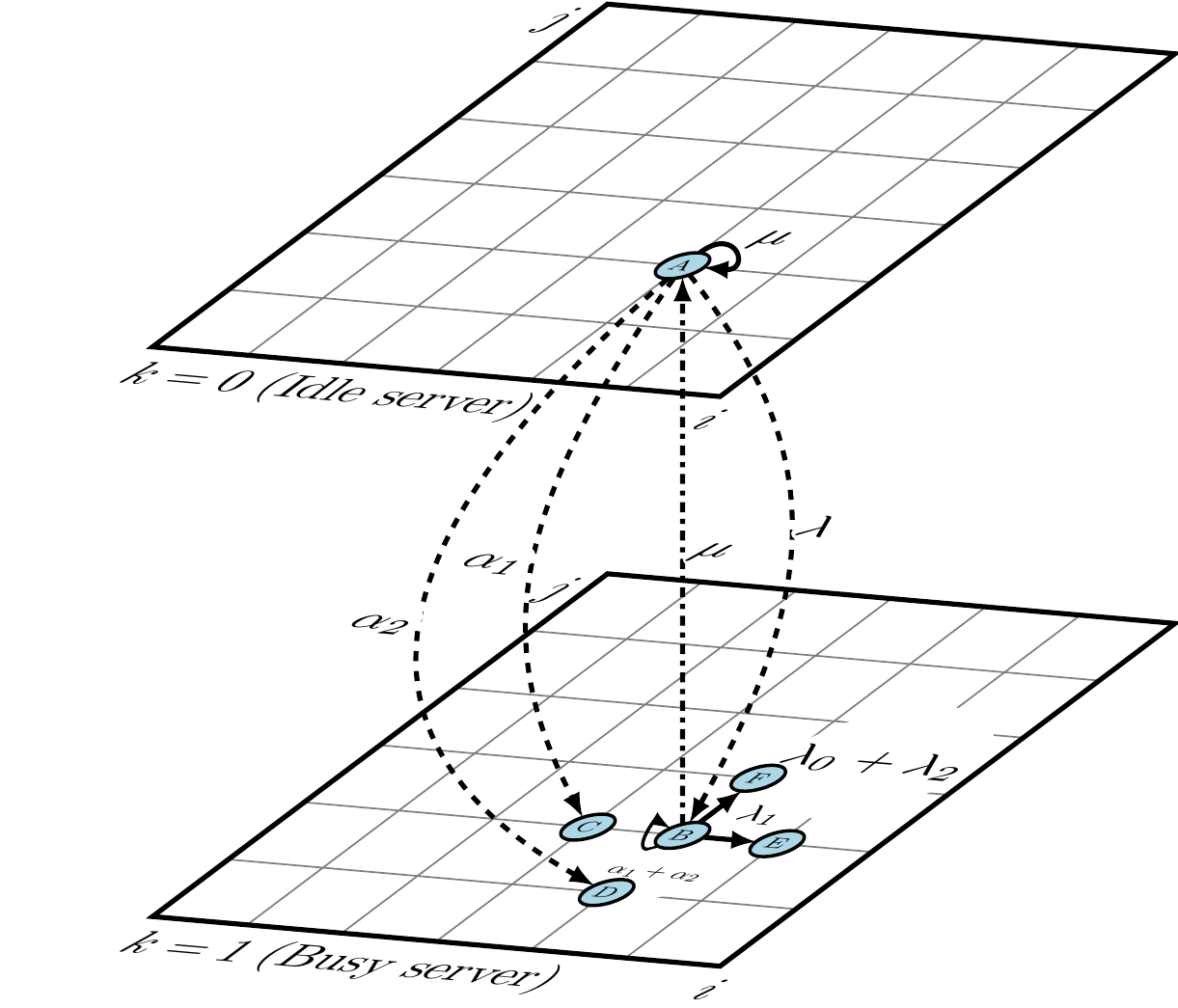}\includegraphics[scale=0.5]{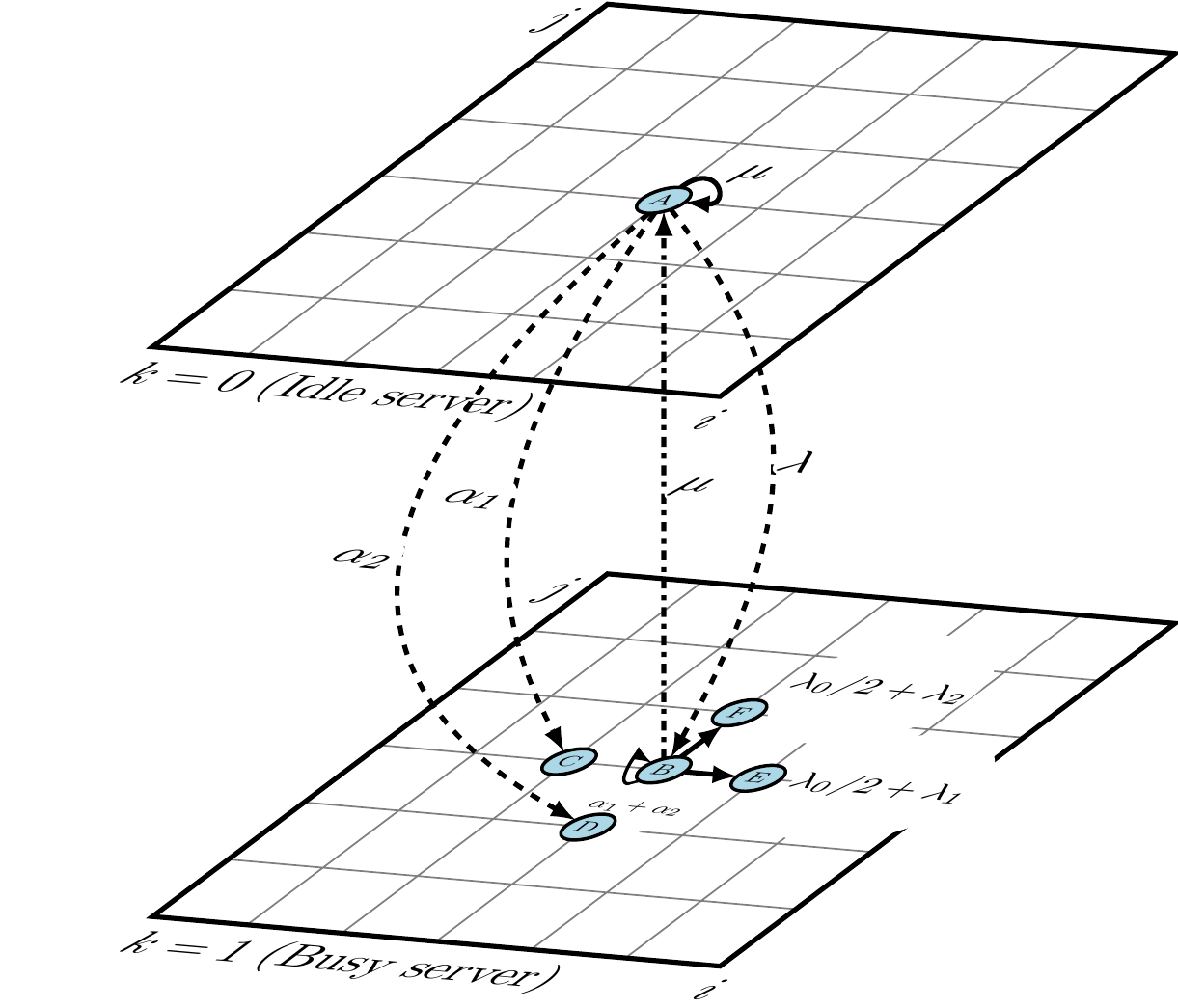}
\caption{An instance of state transitions from states belonging to angle $r_{1}$ (left), and ray $d$ (right), given the state of the server.}
\label{unifx}
\end{figure}
Lets say some more words regarding the derivation of $A_{i,j}^{(q)}$, $-1\leq i,j\leq 1$, $q=r_{1},r_{2},h,v,0$. For example, $A_{1,0}^{(r_1)}$ contains the transition probabilities from a state that belongs to the angle $r_{1}$ and results in an increase by one at the orbit queue 1, i.e.,  from $(i,j,k)$ to $(i+1,j,l)$, where $i>j$, $k,l=0,1$. Clearly, such a transition occurs only when we have a dedicated arrival of type 1, and the server is busy. Similarly, given a state in the ray $d$, $A_{1,0}^{(d)}$ contains transition probabilities that result in an increase by one at orbit queue 1, given that both orbits have the same occupancy. Such a transition is done either with the arrival of a dedicated job, or with the arrival of a smart job who sees both orbits with the same number of jobs and is routed with probability $1/2$ to orbit queue 1. The rest of $A_{i,j}^{(q)}$, $-1\leq i,j\leq 1$, $q=r_{1},r_{2},h,v,0$, are constructed similarly. 

In the following sections, we provide our main results that refer to the asymptotic behaviour of $\{X(t);t\geq 0\}$. We first investigate the stability conditions. Then, our first main result refers to the investigation of the decay rate of the tail probabilities for the shortest queue length in steady state; see Theorem \ref{decay}. We cope with this task in a number of steps, after considering the corresponding discrete time Markov chain $\{X(n);n\geq 0\}$ through the uniformization technique: 
\begin{enumerate}
\item We transform the uniformized Markov chain to create the minimum and the difference of the queue states process (i.e., the Markov modulated random walk in the half-plane), and then consider the censored process at busy states.
\item We investigate the stationary tail decay rate of the censored process at busy states.
\item Using a relation among busy and idle states, we show that the asymptotic properties of the shortest queue for the idle states is the same with its asymptotic properties for the busy states.
\end{enumerate}

Our second main result relies on a heuristic approach to construct approximations for the stationary distribution of the original process. In particular, our  aim (see Lemmas \ref{sym}, \ref{asym}) is to solve \eqref{e1}-\eqref{e7} as either $i$ or $j\to\infty$. This task is accomplished by constructing a solution of \eqref{e1}-\eqref{e7} separately in each of the regions defined as follows: Region I: $i\gg 1$, $j\gg 1$, Region II: $i=0$ or $1$, $j\gg 1$,
Region III: $i\gg 1$, $j=0$ or $1$.
\section{On the stability condition}\label{stabo}
In this section, we investigate the stability condition of the model at hand. Stability condition for the standard generalized join the shortest queue (GJSQ) model without retrials was recently investigated; see e.g., \cite{foley,kurk}. Clearly, the presence of retrials along with the presence of the dedicated arrival flows to each orbit when the server is busy, complicates the problem considerably. Note that our model is described by a non-homogeneous Markov modulated two-dimensional nearest-neighbour random walk, and its stability condition, to our best knowledge, is still an open problem. We mention here that the stability condition of a \textit{homogeneous} Markov modulated two-dimensional nearest neighbour random walk was recently investigated in \cite{ozawa} by using the concept of \textit{induced Markov chains}. 

Here on, by noting that the orbit queues grow only when the server is busy, we construct a new discrete time Markov chain embedded at epochs in which the server is busy, i.e., the \textit{censored} Markov chain at busy states (censored Markov chains have been widely used for proving the uniqueness of the stationary vector
for a recurrent countable-state Markov chain \cite{kem,zhaoliu}). Then, using standard Foster-Lyapunov arguments, we provide its stability conditions, and having in mind that orbits grow only when the server is busy (it is natural to assume that the behaviour of the original process at the busy states heavily affects its convergence), we conjecture that the obtained stability conditions for the censored Markov chain on the busy states coincides with those of the original one. Simulation experiments indicate that our conjecture is true. The formal justification of our conjecture is an interesting open problem and we let it be a future study.

We first consider the uniformized discrete time Markov
chain, say $X(n)=\{(N_{1,n},N_{2,n},C_{n});n\geq 0\}$, of $\{X(t);t\geq0\}$ with transition diagram given in \eqref{unif} and state space $S=E\cup \tilde{E}$, where $E=\mathbb{N}_{0}\times\mathbb{N}_{0}\times\{1\}$, $E^{c}=\mathbb{N}_{0}\times\mathbb{N}_{0}\times\{0\}$. Then, we partition the transition matrix $P$ of $X(n)$ according to $E$, $E^{c}$ into:
\begin{displaymath}
P=\bordermatrix{&E^{c}&E\cr 
E^{c}&P_{0,0}& P_{0,1}\cr
E& P_{1,0} &P_{1,1}},
\end{displaymath}
where 
\begin{displaymath}
\begin{array}{rl}
P_{0,0}=diag(A_{0},A_{1},A_{1},\ldots),&P_{1,0}=diag(L_{0},L_{0},L_{0},\ldots),\\
P_{0,1}=\begin{pmatrix}
B_{0}&&&&\\
B_{1}&B_{0}&&&\\
&B_{1}&B_{0}&&\\
&&\ddots&\ddots&
\end{pmatrix},&P_{1,1}=\begin{pmatrix}
D_{0,0}&D_{0,1}&&&\\
&D_{1,1}&D_{1,2}&&\\
&&D_{2,2}&D_{2,3}&\\
&&&\ddots&\ddots
\end{pmatrix},\\
A_{0}=diag(\mu+\alpha_{1}+\alpha_{2},\mu+\alpha_{1},\mu+\alpha_{1},\ldots),&A_{1}=diag(\mu+\alpha_{2},\mu,\mu,\ldots),\\
L_{0}=\mu I_{\infty},&B_{1}=\alpha_{1}I_{\infty},\\
B_{0}=\begin{pmatrix}
\lambda&&&&\\
\alpha_{2}&\lambda&&&\\
&\alpha_{2}&\lambda&&\\
&&\ddots&\ddots&
\end{pmatrix},& D_{i,i}=(\alpha_{1}+\alpha_{2})I_{\infty}+Q_{i+1,i+2},\,i=0,1,\ldots,
\end{array}
\end{displaymath}
where $I_{\infty}$ is the identity matrix of infinite dimension and $Q_{i+1,i+2}$ is a infinite dimension matrix with only non-zero entries at the superdiagonal and such that the $(k,k+1)-$entry equal to $\lambda_{0}+\lambda_{2}$, $k=1,2,\ldots,i$, the $(i+1,i+2)-$entry equals $\frac{\lambda_{0}}{2}+\lambda_{2}$, and the $(i+k,i+k+1)-$entry equals $\lambda_{2}$, $k=2,3,\ldots$. Finally, the matrix $D_{i,i+1}$, $i=0,1,\ldots$ is a diagonal matrix of infinite dimension with $(k,k)-$entry equal to $\lambda_{1}$, $k=1,2,\ldots,i$, the $(i+1,i+1)-$entry equals $\frac{\lambda_{0}}{2}+\lambda_{1}$, and the $(i+k,i+k)-$entry equals $\lambda_{0}+\lambda_{1}$, $k=2,3,\ldots$. In what follows, denote $\widehat{\lambda}:=\lambda(\lambda+\alpha_{1}+\alpha_{2})$, $\widehat{\lambda}_{k}:=\lambda_{k}(\lambda+\alpha_{1}+\alpha_{2})$, $k=0,1,2,$ and $\widehat{\mu}_{k}=\mu\alpha_{k}$, $k=1,2$.

Since $P_{0,0}$ is a diagonal matrix, its fundamental matrix, say $\tilde{P}_{0,0}$ has the form
\begin{displaymath}
\begin{array}{rl}
\tilde{P}_{0,0}=&\sum_{n=0}^{\infty}P_{0,0}^{n}=diag(\tilde{A}_{0},\tilde{A}_{1},\tilde{A}_{1}),\\
\tilde{A}_{0}=&diag(\frac{1}{\lambda},\frac{1}{\lambda+\alpha_{2}},\frac{1}{\lambda+\alpha_{2}},\ldots)\\
\tilde{A}_{1}=&diag(\frac{1}{\lambda+\alpha_{1}},\frac{1}{\lambda+\alpha_{1}+\alpha_{2}},\frac{1}{\lambda\alpha_{1}+\alpha_{2}},\ldots).
\end{array}
\end{displaymath}
The censored chain $\{\tilde{X}(n);n\geq0\}$ at busy states has six regions
of spatial homogeneity: two angles, say $r_{1}=\{i>j> 0\}$ and $r_{2}=\{j>i> 0\}$, three rays, say $h=\{j=0,i>0\}$, $v=\{i=0,j>0\}$, $d=\{i=j>0\}$ and the point $(0,0)$. Then, the one step transition probability matrix of the censored chain $\{\tilde{X}(n);n\geq0\}$, given by $P^{(E)}=P_{1,1}+P_{1,0}\tilde{P}_{0,0}P_{0,1}$ is as follows:
\begin{itemize}
\item In region $i>j>0$ (labelled as $r_{1}$),
\begin{displaymath}
\begin{array}{c}
p_{1,0}^{(r_{1})}=\lambda_{1},p_{0,1}^{(r_{1})}=\lambda_{0}+\lambda_{2},p_{-1,0}^{(r_{1})}=\frac{\widehat{\mu}_{1}}{\lambda+\alpha_{1}+\alpha_{2}},p_{0,-1}^{(r_{1})}=\frac{\widehat{\mu}_{2}}{\lambda+\alpha_{1}+\alpha_{2}},p_{0,0}^{(r_{1})}=\alpha_{1}+\alpha_{2}+\frac{\lambda\mu}{\lambda+\alpha_{1}+\alpha_{2}},
\end{array}
\end{displaymath}
\item In region $j>i>0$ (labelled as $r_{2}$),
\begin{displaymath}
\begin{array}{c}
p_{1,0}^{(r_{2})}=\lambda_{0}+\lambda_{1},p_{0,1}^{(r_{2})}=\lambda_{2},p_{-1,0}^{(r_{2})}=\frac{\widehat{\mu}_{1}}{\lambda+\alpha_{1}+\alpha_{2}},p_{0,-1}^{(r_{2})}=\frac{\widehat{\mu}_{2}}{\lambda+\alpha_{1}+\alpha_{2}},p_{0,0}^{(r_{2})}=\alpha_{1}+\alpha_{2}+\frac{\lambda\mu}{\lambda+\alpha_{1}+\alpha_{2}},
\end{array}
\end{displaymath}
\item In region $i=j>0$ (labelled as $d$),
\begin{displaymath}
\begin{array}{c}
p_{1,0}^{(d)}=\frac{\lambda_{0}}{2}+\lambda_{1},p_{0,1}^{(d)}=\frac{\lambda_{0}}{2}+\lambda_{2},p_{-1,0}^{(d)}=\frac{\widehat{\mu}_{1}}{\lambda+\alpha_{1}+\alpha_{2}},p_{0,-1}^{(d)}=\frac{\widehat{\mu}_{2}}{\lambda+\alpha_{1}+\alpha_{2}},p_{0,0}^{(d)}=\alpha_{1}+\alpha_{2}+\frac{\lambda\mu}{\lambda+\alpha_{1}+\alpha_{2}},
\end{array}
\end{displaymath}
\item In region $i=0$, $j>0$ (labelled as $v$),
\begin{displaymath}
\begin{array}{c}
p_{1,0}^{(v)}=\lambda_{0}+\lambda_{1},p_{0,1}^{(v)}=\lambda_{2},p_{0,-1}^{(v)}=\frac{\widehat{\mu}_{2}}{\lambda+\alpha_{2}},p_{0,0}^{(v)}=\alpha_{1}+\alpha_{2}+\frac{\lambda\mu}{\lambda+\alpha_{2}},
\end{array}
\end{displaymath}
\item In region $j=0$, $i>0$ (labelled as $h$),
\begin{displaymath}
\begin{array}{c}
p_{1,0}^{(h)}=\lambda_{1},p_{0,1}^{(h)}=\lambda_{0}+\lambda_{2},p_{-1,0}^{(h)}=\frac{\widehat{\mu}_{1}}{\lambda+\alpha_{1}},p_{0,0}^{(h)}=\alpha_{1}+\alpha_{2}+\frac{\lambda\mu}{\lambda+\alpha_{1}},
\end{array}
\end{displaymath}
\item For $i=j=0$ (labelled as $O$),
\begin{displaymath}
\begin{array}{c}
p_{1,0}^{(O)}=\lambda_{1}+\frac{\lambda_{0}}{2},p_{0,1}^{(O)}=+\frac{\lambda_{0}}{2}+\lambda_{2},p_{0,0}^{(O)}=\alpha_{1}+\alpha_{2}+\mu.
\end{array}
\end{displaymath}
\end{itemize}
\begin{remark}
Note that the censored chain $\{\tilde{X}(n);n\geq 0\}$ describes a new discrete time queueing system consisting of two parallel \textit{coupled queues} with three job arrival streams, where one of them joins the shortest queue, and the other two are dedicated to each queue. The \textit{coupling} feature of the two queues is easily realised by noting that e.g., $p_{-1,0}^{(h)}=\frac{\widehat{\mu}_{2}}{\lambda+\alpha_{1}}>\frac{\widehat{\mu}_{2}}{\lambda+\alpha_{1}+\alpha_{2}}=p_{-1,0}^{(r_{2})}$. The combination of the JSQ feature, along with the coupled processors feature considerably complicates the analysis.
\end{remark}

\begin{lemma}\label{stab}
The censored chain $\{\tilde{X}(n);n\geq 0\}$ is positive recurrent if and only if one of the following conditions hold:
\begin{enumerate}
\item $\rho_{1}:=\frac{\widehat{\lambda}_{1}}{\widehat{\mu}_{1}}<1$, $\rho_{2}:=\frac{\widehat{\lambda}_{2}}{\widehat{\mu}_{2}}<1$, $\rho=\frac{\widehat{\lambda}}{\widehat{\mu}_{1}+\widehat{\mu}_{2}}<1$,
\item $\rho_{1}\geq 1$, $f_{1}:=\frac{\lambda(\lambda_{1}+\alpha_{1})}{\widehat{\mu}_{1}}-1<0$,
\item $\rho_{2}\geq 1$, $f_{2}:=\frac{\lambda(\lambda_{2}+\alpha_{2})}{\widehat{\mu}_{2}}-1<0$.
\end{enumerate}
\end{lemma}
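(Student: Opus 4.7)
The plan is to establish positive recurrence of the irreducible censored DTMC $\{\tilde{X}(n)\}$ on $\mathbb{N}_0^2$ via the Foster--Lyapunov drift criterion. Since $\tilde{X}(n)$ has only six regions of spatial homogeneity, it suffices to exhibit a non-negative test function $V:\mathbb{N}_0^2\to[0,\infty)$ with $V(i,j)\to\infty$ as $i+j\to\infty$ whose one-step mean drift $\Delta V(i,j)=\sum_{(i',j')} p(i,j;i',j')[V(i',j')-V(i,j)]$ is bounded above by $-\varepsilon<0$ outside a finite exceptional set. My natural first choice is the linear ansatz $V(i,j)=a\,i+b\,j$ with positive weights $(a,b)$ to be tuned case by case, and for the converse I plan to show that failure of all three conditions forces a non-negative drift outside every finite set.

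Using the displayed transition probabilities of $\tilde{X}(n)$ and clearing positive denominators, the drifts in the five unbounded regions reduce to
\begin{align*}
\Delta_{r_1}&=a(\widehat{\lambda}_1-\widehat{\mu}_1)+b(\widehat{\lambda}_0+\widehat{\lambda}_2-\widehat{\mu}_2),\\
\Delta_{r_2}&=a(\widehat{\lambda}_0+\widehat{\lambda}_1-\widehat{\mu}_1)+b(\widehat{\lambda}_2-\widehat{\mu}_2),\\
\Delta_{d}&=\tfrac{1}{2}(\Delta_{r_1}+\Delta_{r_2}),\\
\Delta_{h}&=a\widehat{\mu}_1 f_1+(\lambda_0+\lambda_2)\bigl[b\lambda+(b-a)\alpha_1\bigr],\\
\Delta_{v}&=b\widehat{\mu}_2 f_2+(\lambda_0+\lambda_1)\bigl[a\lambda+(a-b)\alpha_2\bigr].
\end{align*}
The identity $\Delta_d=(\Delta_{r_1}+\Delta_{r_2})/2$ renders the diagonal drift automatic once the two angles are controlled, reducing the verification to four inequalities. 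For Case 1 I would set $a=b=1$: both interior drifts collapse to $\widehat{\lambda}-(\widehat{\mu}_1+\widehat{\mu}_2)$, which is negative precisely by $\rho<1$, and the ray drifts $\Delta_h,\Delta_v$ are then secured by $\rho_1<1$ and $\rho_2<1$ after a bounded correction of $V$ supported on finitely many boundary states. For Cases 2 and 3, which are symmetric under $1\leftrightarrow 2$, I would first observe that joint consistency of $\rho_k\ge 1$ with $f_k<0$ forces, by a direct algebraic comparison of the two defining inequalities, the auxiliary bounds $\lambda_k\alpha_{3-k}>(\lambda_0+\lambda_{3-k})\alpha_k$, and hence $\widehat{\lambda}_0+\widehat{\lambda}_{3-k}<\widehat{\mu}_{3-k}$, $\rho_{3-k}<1$, and $f_{3-k}<0$. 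Choosing $b\gg a$ in Case 2 (and $a\gg b$ in Case 3) then makes the negative $b$-contribution dominate in $\Delta_{r_1}$ and $\Delta_{r_2}$, while $f_1<0$ keeps the leading $a$-coefficient of $\Delta_h$ negative.

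The main technical obstacle I foresee is that in Cases 2 and 3 the lower bound on $b/a$ required for $\Delta_{r_2}<0$ may, for certain parameter ranges, exceed the upper bound on $b/a$ imposed by $\Delta_h<0$, so no purely linear test function will work in full generality. To overcome this I would either (i) augment $V$ by a bounded piecewise perturbation $\psi(\min(i,j))$ that reshapes the drift near the rays without altering its asymptotic slope, or (ii) replace the one-step criterion by an $N$-step drift condition that time-averages the dynamics over excursions between the interior and the boundary rays. The intuition is that in Case 2 the chain oscillates between the interior (where queue $1$ is locally overloaded because $\rho_1\ge 1$) and the ray $h$ (where $f_1<0$ makes queue $1$ contract), and $f_1<0$ is precisely what guarantees that the boundary sojourns are frequent enough to absorb the interior growth on average. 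For the converse direction I would use the same drift formulas to show that failure of all three conditions yields a non-negative function with non-negative drift outside every finite set, invoking the converse of Foster's theorem (or the instability of the associated deterministic fluid model) to preclude positive recurrence.
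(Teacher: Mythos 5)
Your skeleton---Foster--Lyapunov drift for sufficiency, converse Foster for necessity---is exactly the paper's, and your drift algebra and Case~1 treatment with $a=b$ are fine (the paper uses $\sqrt{i^2+j^2}$, which at the level of asymptotic drift is the same). You also correctly diagnose the obstruction in Cases~2 and~3: the lower bounds on $b/a$ coming from $r_1,r_2,v$ can be incompatible with the upper bound on $b/a$ coming from $h$, so no linear $V(i,j)=ai+bj$ will work in general. But the final leg of your argument does not close the gap, and in one place is internally inconsistent.

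First, the claim ``Choosing $b\gg a$ \ldots makes the negative $b$-contribution dominate in $\Delta_{r_1},\Delta_{r_2}$, while $f_1<0$ keeps the leading $a$-coefficient of $\Delta_h$ negative'' is a non sequitur: in your own formula $\Delta_h=a\widehat{\mu}_1 f_1+(\lambda_0+\lambda_2)[b\lambda+(b-a)\alpha_1]$, the $b$-contribution $(\lambda_0+\lambda_2)(\lambda+\alpha_1)b$ is strictly positive and grows linearly in $b$; taking $b\gg a$ makes $\Delta_h>0$, killing the drift condition on ray $h$. A negative $a$-coefficient is cold comfort once the $b$-term swamps it. Second, your proposed repair (i) of adding a bounded perturbation $\psi(\min(i,j))$ cannot work: a bounded modification of $V$ changes the drift only by an $O(1/V)$ amount, and if the linear part already has asymptotically positive drift on a ray, a bounded perturbation cannot flip its sign outside a finite set. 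Option (ii), an $N$-step or averaged-over-excursion drift, is in principle viable but is never carried out, and that is where the actual work lies.

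What the paper does instead---and what you would need to add---is the quadratic Lyapunov function $V(i,j)=\sqrt{k i^2+l j^2+w i j}$ with a nonzero cross term $w$ (this is the classical Fayolle--Malyshev--Menshikov construction for quarter-plane walks). The essential point is that with the quadratic form the drift at a state $(i,j)$ is asymptotically $\bigl[i(2kM_i+wM_j)+j(wM_i+2lM_j)\bigr]/(2V)$, so the constraint on ray $h$ (where $j=0$) is $2kM_i^{(h)}+wM_j^{(h)}<0$---controlled by the ratio $k/w$---while the constraints deep inside the angles are controlled by $w/l$ and can be met by taking $l$ large without perturbing the ray-$h$ constraint. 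That decoupling is what a single ratio $b/a$ lacks, and the condition $f_1<0$ (resp.\ $f_2<0$) is precisely the determinant inequality $M_i^{(r_1)}M_j^{(h)}-M_i^{(h)}M_j^{(r_1)}<0$ guaranteeing the $(k,w)$-interval is nonempty. On a secondary point, your auxiliary claim that $\rho_k\ge 1$ and $f_k<0$ imply $\widehat{\lambda}_0+\widehat{\lambda}_{3-k}<\widehat{\mu}_{3-k}$ is true, but not by the route you sketch: the direct comparison gives $\lambda_k\alpha_{3-k}>(\lambda_0+\lambda_{3-k})\alpha_k$, which does not yield the bound by itself; the clean argument is to first show that $\rho_1\ge 1$ together with $f_1<0$ forces $\rho<1$ (substitute the lower bound $\lambda_1\ge\mu\alpha_1/(\lambda+\alpha_1+\alpha_2)$ into $\lambda\lambda_1<(\mu-\lambda)\alpha_1$), and then combine $M_i^{(r_1)}+M_j^{(r_1)}<0$ with $M_i^{(r_1)}\ge 0$ to get $M_j^{(r_1)}<0$. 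Finally, your converse sketch is at the right level of generality, but to actually invoke converse Foster you need explicit non-negative functions with non-negative drift (the paper uses $i+j$ when $\rho\ge1$ and the tilted linear form $-M_j^{(r_1)}i+M_i^{(r_1)}j$ when $f_1\ge 0$); appealing to a fluid-model instability criterion would be a genuinely different, and heavier, route.
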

\begin{proof}
The proof is given in Appendix \ref{app1}.
\end{proof}

We now study the dynamics of the orbits in the original process $\{X(t);t\geq 0\}$ based on the stability criteria of the censored process $\{\tilde{X}(n);n\geq 0\}$ given in Lemma \ref{stab}. 

Figure \ref{ssst1} refers to the stability criteria 1. In Figure \ref{ssst1} (left) we have $\rho=0.78>max\{\rho_{1}=0.433,\rho_{2}=0.325\}$ and $\widehat{\lambda}_{0}-|\widehat{\lambda}_{1}-\widehat{\lambda}_{2}+\rho^{2}(\widehat{\mu}_{2}-\widehat{\mu}_{1})|=19.084>0$. It is seen that in such a case the original process is stable, and that both orbits are well balanced. This case corresponds to the strongly pooled case mentioned in \cite[Theorem 2]{foley}, i.e., the proportion of jobs that is routed to the least loaded orbit queue in case the server is busy is large. In Figure \ref{ssst1} (middle), we have the case $1>\rho>max\{\rho_{1},\rho_{2}\}$, with $\rho_{1}>\rho_{2}$ but now $\widehat{\lambda}_{0}-|\widehat{\lambda}_{1}-\widehat{\lambda}_{2}+\rho^{2}(\widehat{\mu}_{2}-\widehat{\mu}_{1})|<0$. The original process is still stable but we can observe that the orbit lengths are not so close any more. This means that the stream that joins the shortest orbit failed to keep the orbit queue lengths close enough to each other. In case $\rho>1$ both orbits are unstable, even though $\rho_{1}<1$, $\rho_{2}<1$ (Figure \ref{ssst1} (right)). 

\begin{figure}[H]
\centering
\includegraphics[scale=0.25]{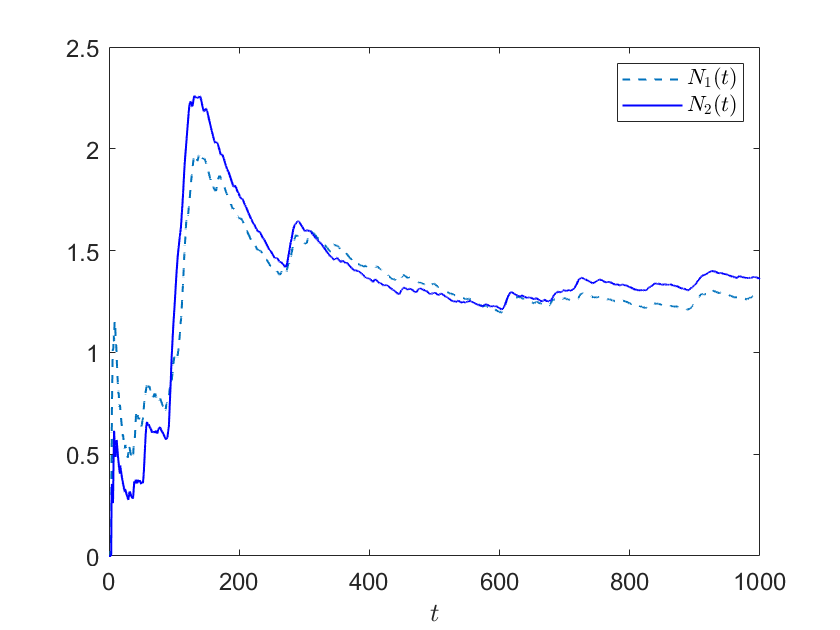}\includegraphics[scale=0.25]{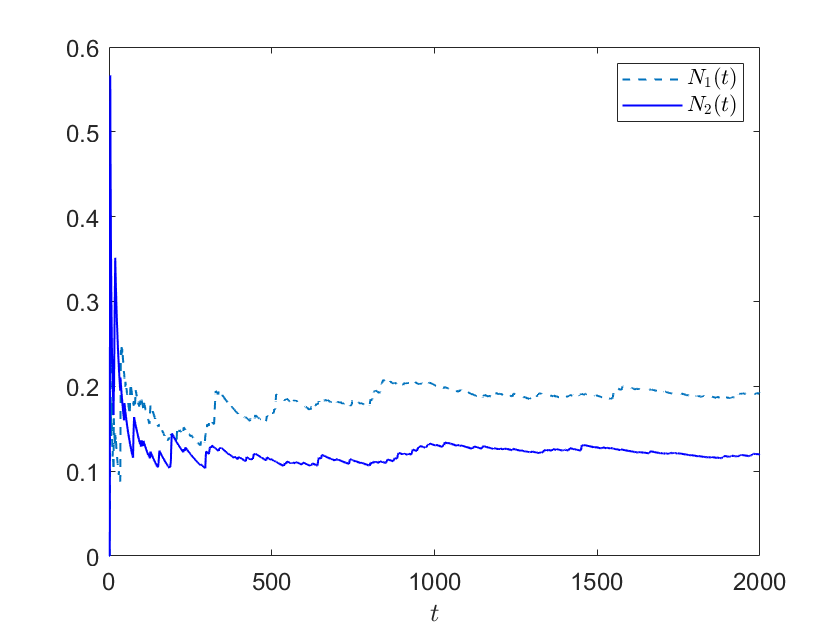}\includegraphics[scale=0.25]{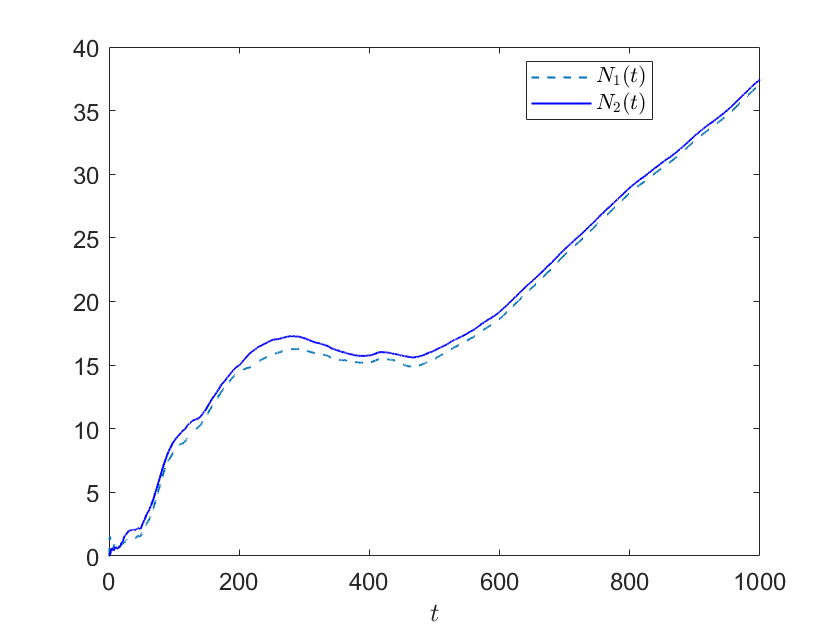}
\caption{Orbit dynamics for case $1>\rho>max\{\rho_{1},\rho_{2}\}$, $\widehat{\lambda}_{0}-|\widehat{\lambda}_{1}-\widehat{\lambda}_{2}+\rho^{2}(\widehat{\mu}_{2}-\widehat{\mu}_{1})|>0$ (left), $1>\rho>max\{\rho_{1},\rho_{2}\}$, $\widehat{\lambda}_{0}-|\widehat{\lambda}_{1}-\widehat{\lambda}_{2}+\rho^{2}(\widehat{\mu}_{2}-\widehat{\mu}_{1})|<0$ (middle), and for the case $\rho>1$, $\rho_{1}<1$, $\rho_{2}<1$ (right).}
\label{ssst1}
\end{figure}
In Figure \ref{ssstr2} (left) we have $\rho_{1}=0.428>\max\{\rho=0.4042,\rho_{2}=0.2675\}$, and $\widehat{\lambda}_{0}-|\widehat{\lambda}_{1}-\widehat{\lambda}_{2}+\rho^{2}(\widehat{\mu}_{2}-\widehat{\mu}_{1})|=-2.393<0$. In such a case the system is still stable, but the \textit{smart} stream failed to keep the orbit queue length very close. Similar observations can be also deduced from Figure \ref{ssstr2} (right) for which $\rho_{1}=0.0944>\max\{\rho=0.061,\rho_{2}=0.021\}$, and $\widehat{\lambda}_{0}-|\widehat{\lambda}_{1}-\widehat{\lambda}_{2}+\rho^{2}(\widehat{\mu}_{2}-\widehat{\mu}_{1})|=0.7663>0$.
\begin{figure}[H]
\centering
\includegraphics[scale=0.25]{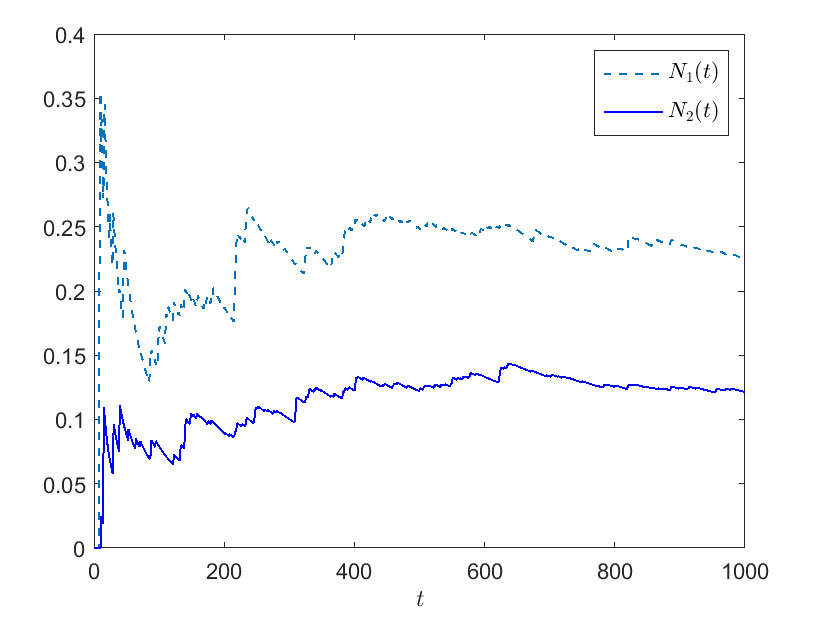}\includegraphics[scale=0.25]{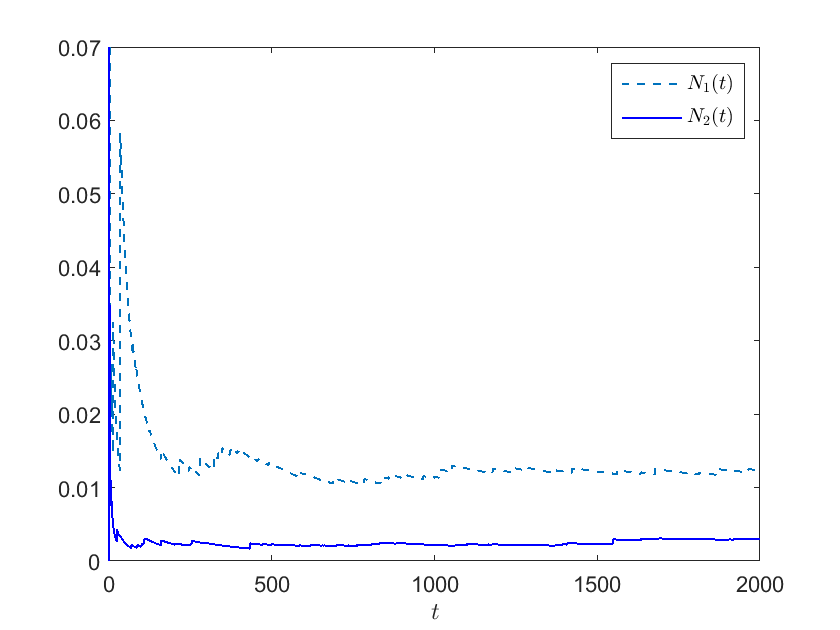}
\caption{Orbit dynamics for case $1>\rho_{1}>max\{\rho,\rho_{2}\}$, $\widehat{\lambda}_{0}-|\widehat{\lambda}_{1}-\widehat{\lambda}_{2}+\rho^{2}(\widehat{\mu}_{2}-\widehat{\mu}_{1})|<0$ (left), $1>\rho_{1}>max\{\rho,\rho_{2}\}$, $\widehat{\lambda}_{0}-|\widehat{\lambda}_{1}-\widehat{\lambda}_{2}+\rho^{2}(\widehat{\mu}_{2}-\widehat{\mu}_{1})|<0$ (right).}
\label{ssstr2}
\end{figure}

In  Figure \ref{ssst2} (left), although $\rho_{1}>1$ ($\rho<1$, $\rho_{2}<1$), the fact that $f_{1}<0$ keeps the network stable. On the other hand when $f_{1}>0$ (Fig. \ref{ssst2}, middle), orbit queue 1 becomes unstable, and so it is the network. Note that in such a case orbit queue 2 remains stable. In Figure \ref{ssst2} (right), when $\rho>1$ both orbits becomes unstable, even though $\rho_{2}<1$. Similar observations can be deduced from Figure \ref{ssst3} that refers to the stability criteria 3. 

\begin{figure}[H]
\centering
\includegraphics[scale=0.25]{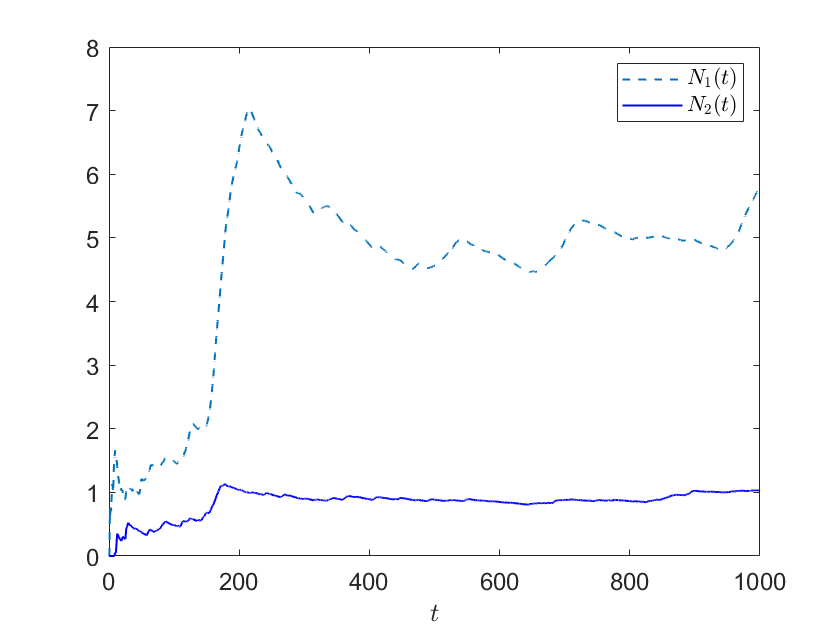}\includegraphics[scale=0.25]{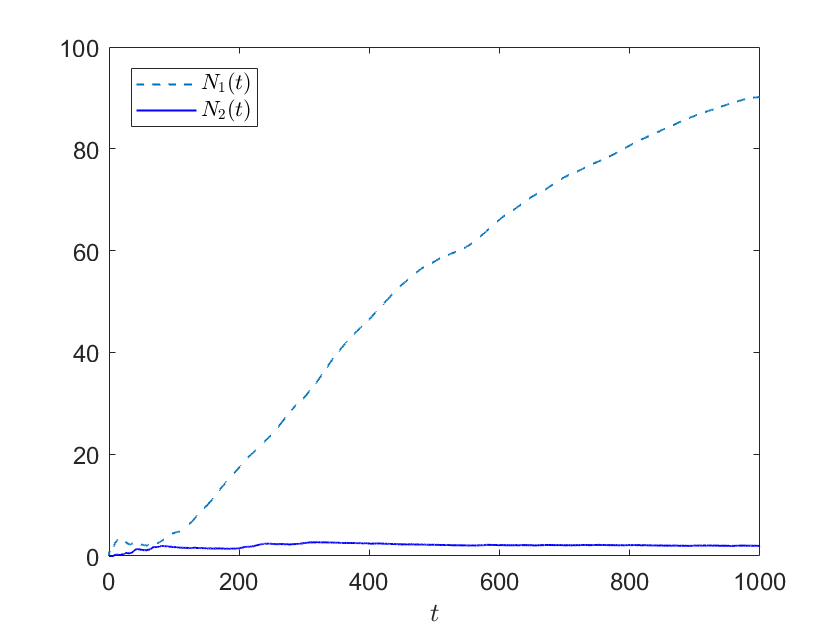}\includegraphics[scale=0.25]{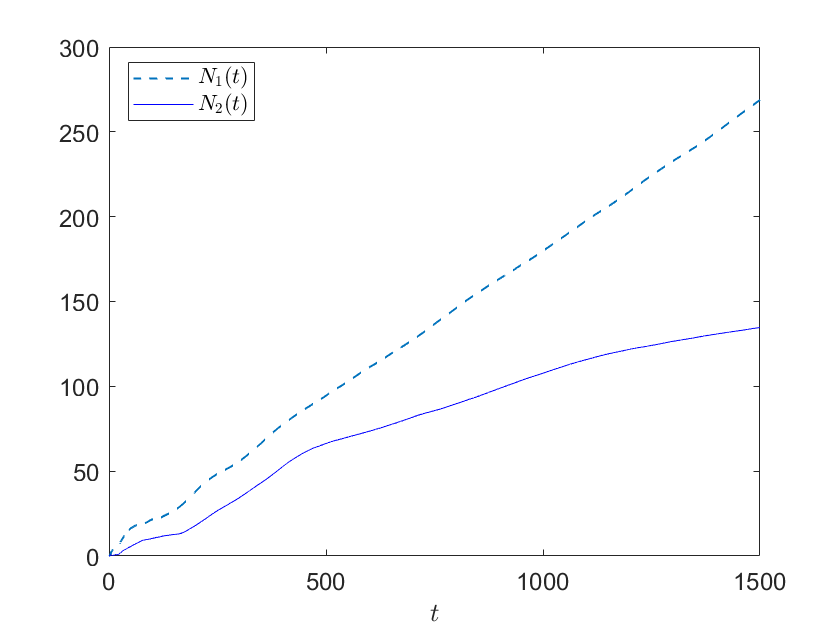}
\caption{Orbit dynamics for case $\rho<1$, $\rho_{1}>1$, $\rho_{2}<1$, $f_{1}<0$ (left), for the case $\rho<1$, $\rho_{1}>1$, $\rho_{2}<1$, $f_{1}>0$ (middle), and for the case $\rho>1$, $\rho_{1}>1$, $\rho_{2}<1$, $f_{1}>0$ (right).}
\label{ssst2}
\end{figure}
\begin{figure}[H]
\centering
\includegraphics[scale=0.25]{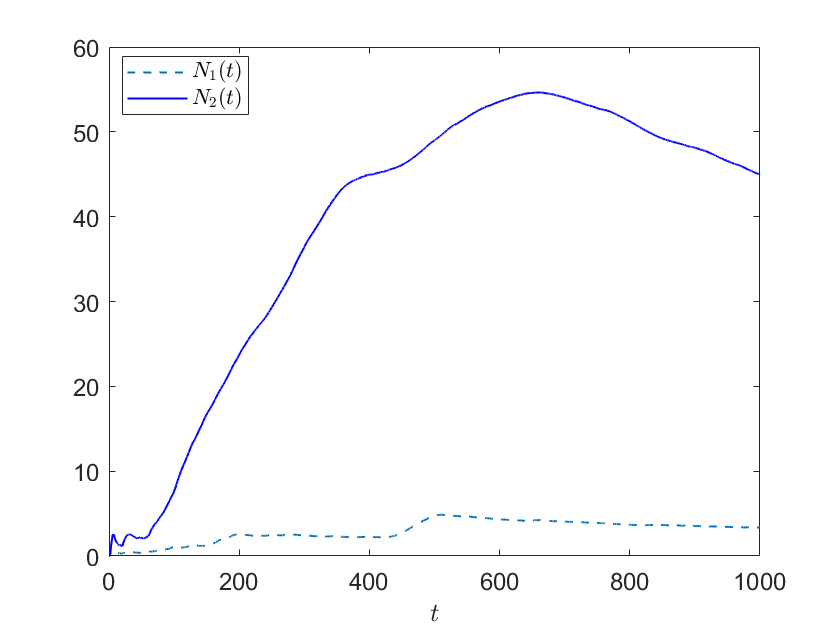}\includegraphics[scale=0.25]{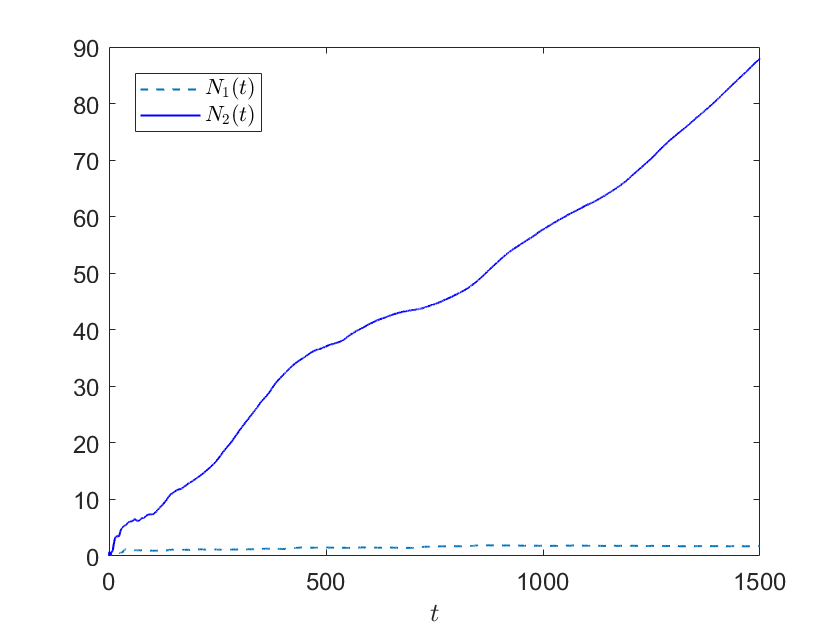}
\caption{Orbit dynamics for case $\rho<1$, $\rho_{1}<1$, $\rho_{2}>1$, $f_{2}<0$ (left), for the case $\rho<1$, $\rho_{1}<1$, $\rho_{2}>1$, $f_{2}>0$ (right).}
\label{ssst3}
\end{figure}

Therefore, simulations experiments indicate that the stability criteria given in Lemma \ref{stab} constitute stability criteria for the GJSOQ system. A formal justification of this result is postponed in a future work.
\section{Main results}\label{main}
\subsection{Geometric decay in the minimum direction}\label{taildecay}
In the following, we investigated the tail asymptotic of the stationary distribution of our model provided that $\rho_{1}<1$, $\rho_{2}<1$, $\rho<1$. We show that the tail asymptotic for the minimum orbit queue lengths for a fixed value of their difference, and server's state is exactly geometric. Moreover, we show that its decay rate is the square of the decay rate, i.e., $\rho^{2}$, of the corresponding modified single orbit queue, say \textit{the reference system}, which is formally described at the end of this section.

Clearly, when $\lambda_{1}=\lambda_{2}=0$, both orbit queues are well balanced by the \textit{smart} stream, and we again expect that the decay rate of the minimum of orbit queues equals $\rho^{2}$. Note that this result is motivated by the standard Markovian JSQ model (without retrials), in which the tail decay rate of the minimum of queue lengths is the square of the tail decay rate of the M/M/2. Theorem \ref{decay} states that our model has a similar behaviour. 

Tail asymptotic properties for random walks in the half plane are
still very limited including a recent paper by \cite{miya2sided}, in which the
method developed in \cite{miyatail} was extended for studying the standard generalized join the-shortest-queue model. Our goal here is to cope with the tail asymptotic properties of a Markov modulated random walk in the half-plane, with particular interest on a modulation that results in a tractable analysis. 

Our approach is summarized in the following steps:
\begin{enumerate}
\item[Step 1] We focus on the uniformized discrete time Markov chain $\{X(n);n\geq 0\}$, and perform the transformation $Z_{1,n}=min\{N_{1,n},N_{2,n}\}$, $Z_{2,n}=N_{2,n}-N_{1,n}$.
\item[Step 2] The discrete time Markov chain $Z(n)=\{(Z_{1,n},Z_{2,n},C_{n});n\geq 0\}$ is a Markov modulated random walk in the half plane with transition diagram as shown in Figure \ref{struc}, where the matrices $A_{i,j}^{(m)}$, $i,j=0,\pm1$, $m=r_{1},r_{2},d,v,h,0$ are given in Section \ref{model}. Then, we consider the censored Markov chain of $\{Z(n);n\geq 0\}$ on the busy states, which can be expressed explicitly. This censored Markov chain is a random walk in the half plane.
\item[Step 3] We investigate the tail asymptotic behaviour of this censored Markov chain at the busy states following \cite{limiya}, and show that the minimum orbit queue length decays geometrically; see Theorem \ref{decay}.
\item[Step 4] By exploiting a relation between idle and busy states (see \eqref{clop}), we further show that the minimum orbit queue length at the idle states decays also geometrically; see Corollary \ref{corol}.
\end{enumerate}
\begin{figure}[htp]
\centering
\includegraphics[scale=0.8]{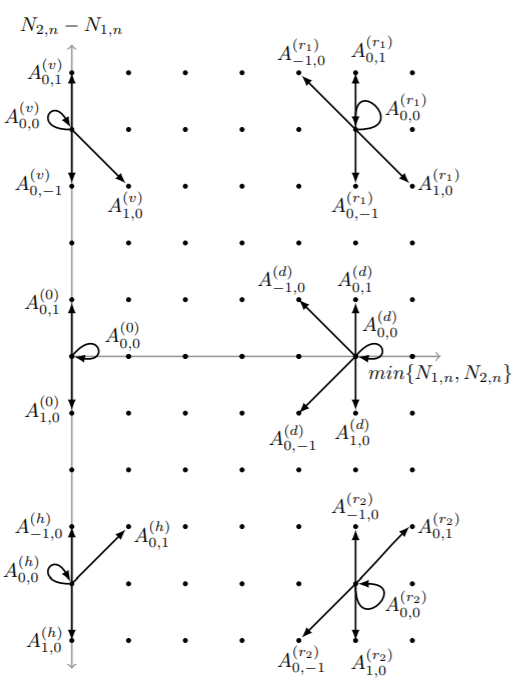}
\caption{The matrix transition diagram of $\{Z(n);n\geq 0\}$.}
\label{struc}
\end{figure}
Note that the presence of dedicated streams that are routed to the shortest orbit in case of a busy server complicates considerably the problem of decay rates. We are also interested in when the two orbit queues are
balanced. Intuitively, the two orbit queues being balanced when the smart arrival
stream, which routes the job to the least loaded orbit queue, keeps the two orbit queue lengths
close. Since the difference of the two orbit queues is the background state, the balancing of the two orbit queues is characterized
by how the background process behaves as the level process (i.e., the shortest orbit queue) goes up.

By applying a similar procedure as in Section \ref{stabo}, the censored Markov chain on the busy states of $\{Z(n);n\geq 0\}$, is a two-dimensional random walk on the half plane $\{(m,l)\in\mathbb{Z}^{2};l\geq 0\}$, say $Y(n)=\{(Y_{1,n},Y_{2,n},1);n\geq 0\}$, where $Y_{1,n}=N_{2,n}-N_{1,n}$, $Y_{2,n}=min\{N_{1,n},N_{2,n}\}$. Its transition rate diagram is given in Figure \ref{tr1}, where its one step transition probabilities are:
\begin{equation}
\begin{array}{c}
q_{0,-1}^{(-)}=\lambda_{1},\,q_{-1,-1}^{(-)}=\frac{\widehat{\mu}_{2}}{\lambda+\alpha_{1}+\alpha_{2}},\,q_{0,1}^{(-)}=\frac{\widehat{\mu}_{1}}{\lambda+\alpha_{1}+\alpha_{2}},\,q_{1,1}^{(-)}=\lambda_{0}+\lambda_{2},\,q_{0,0}^{(-)}=\alpha_{1}+\alpha_{2}+\frac{\lambda\mu}{\lambda+\alpha_{1}+\alpha_{2}},\\
q_{0,1}^{(+)}=\lambda_{2},\,q_{0,-1}^{(+)}=\frac{\widehat{\mu}_{2}}{\lambda+\alpha_{1}+\alpha_{2}},\,q_{-1,1}^{(+)}=\frac{\widehat{\mu}_{1}}{\lambda+\alpha_{1}+\alpha_{2}},\,q_{1,-1}^{(+)}=\lambda_{0}+\lambda_{1},\,q_{0,0}^{(+)}=\alpha_{1}+\alpha_{2}+\frac{\lambda\mu}{\lambda+\alpha_{1}+\alpha_{2}},\\
q_{0,1}^{(2)}=\lambda_{2}+\frac{\lambda_{0}}{2},\,q_{-1,-1}^{(2)}=\frac{\widehat{\mu}_{2}}{\lambda+\alpha_{1}+\alpha_{2}},\,q_{-1,1}^{(2)}=\frac{\widehat{\mu}_{1}}{\lambda+\alpha_{1}+\alpha_{2}},\,q_{0,-1}^{(2)}=\frac{\lambda_{0}}{2}+\lambda_{1},\,q_{0,0}^{(2)}=\alpha_{1}+\alpha_{2}+\frac{\lambda\mu}{\lambda+\alpha_{1}+\alpha_{2}},\\
q_{0,-1}^{(1-)}=\lambda_{1},\,q_{0,1}^{(1-)}=\frac{\widehat{\mu}_{1}}{\lambda+\alpha_{1}},\,q_{1,1}^{(1-)}=\lambda_{0}+\lambda_{2},\,q_{0,0}^{(1-)}=\alpha_{1}+\alpha_{2}+\frac{\lambda\mu}{\lambda+\alpha_{1}},\\
q_{0,1}^{(1+)}=\lambda_{2},\,q_{0,-1}^{(1+)}=\frac{\widehat{\mu}_{2}}{\lambda+\alpha_{2}},\,q_{1,-1}^{(1+)}=\lambda_{0}+\lambda_{1},\,q_{0,0}^{(1+)}=\alpha_{1}+\alpha_{2}+\frac{\lambda\mu}{\lambda+\alpha_{2}},\\
q_{0,1}^{(0)}=\lambda_{2}+\frac{\lambda_{0}}{2},\,q_{0,0}^{(0)}=\alpha_{1}+\alpha_{2}+\mu
,\,q_{0,-1}^{(0)}=\lambda_{1}+\frac{\lambda_{0}}{2}.
\end{array}\label{one}
\end{equation}
\begin{figure}[H]
\centering
\includegraphics[scale=0.8]{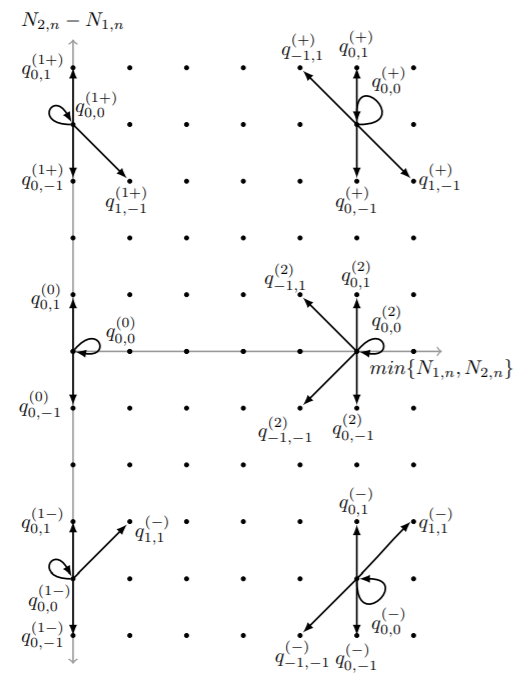}
\caption{Transition rate diagram of the censored chain $\{Y(n);n\geq 0\}$.}
\label{tr1}
\end{figure}
\begin{remark}
Note that $\{Y(n);n\geq 0\}$ can be also obtained by the censored Markov chain $\{\tilde{X}(n);n\geq 0\}$ (on the busy states of $\{X(n);n\geq 0\}$), by applying the transformation $Y_{1,n}=min\{N_{1,n},N_{2,n}\}$, $Y_{2,n}=N_{2,n}-N_{1,n}$.
\end{remark}
The main result is summarized in the following theorem. Let $\boldsymbol{\pi}\equiv\{\boldsymbol{\pi}_{m},m=0,1,\ldots\}$ be the stationary joint distribution of $min\{N_{1,n},N_{2,n}\}$ and $N_{2,n}-N_{1,n}$ when the server is busy, where $\boldsymbol{\pi}_{m}=\{\pi_{m,l}(1),l=0,\pm 1,\pm 2,\ldots\}$ is the subvector for level $m$, i.e.,
\begin{displaymath}
\begin{array}{c}
\pi_{m,l}(1)=\lim_{n\to\infty}\mathbb{P}(min\{N_{1,n},N_{2,n}\}=m,N_{2,n}-N_{1,n}=l,C_{n}=1),\,m\geq 0,l\in\mathbb{Z}.
\end{array}
\end{displaymath}
\begin{theorem}\label{decay}
For the generalized join the shortest orbit queue system satisfying $max\{\rho_{1},\rho_{2}\}<\rho<1$, and $\widehat{\lambda}_{0}>|\widehat{\lambda}_{2}-\widehat{\lambda}_{1}+\rho^{2}(\widehat{\mu}_{1}-\widehat{\mu}_{2})|$, the stationary probability vector $\boldsymbol{\pi}_{m}$ decays geometrically as $m\to\infty$ with decay rate $\rho^{2}$, i.e.,
\begin{equation}
\lim_{m\to\infty}\rho^{-2m}\pi_{m,l}(1)=x_{l}(\rho^{-2}),\label{bnmm}
\end{equation}
where
\begin{equation}
\begin{array}{rl}
x_{l}(\rho^{-2})=\left\{\begin{array}{ll}
\frac{\gamma_{2}+\widehat{\lambda}_{0}/2}{\gamma_{2}}x_{0}\left(\rho\frac{\gamma_{2}}{\gamma_{1}+\widehat{\lambda}_{0}}\right)^{|l|},&l\leq -1,\\
x_{0},&l=0,\\
\frac{\gamma_{1}+\widehat{\lambda}_{0}/2}{\gamma_{1}}x_{0}\left(\rho\frac{\gamma_{1}}{\gamma_{2}+\widehat{\lambda}_{0}}\right)^{l},&l\geq 1,
\end{array}\right.
\end{array}
\label{bnm}
\end{equation}
where $x_{0}$ is a constant and $\gamma_{1}=\widehat{\mu}_{1}\rho^{2}+\widehat{\lambda}_{2}$, $\gamma_{2}=\widehat{\mu}_{2}\rho^{2}+\widehat{\lambda}_{1}$. Moreover, the two orbit queues are \textit{strongly balanced} if and only if $\gamma_{2}<\rho(\gamma_{1}+\widehat{\lambda}_{0})$, $\gamma_{1}<\rho(\gamma_{2}+\widehat{\lambda}_{0})$.
\end{theorem}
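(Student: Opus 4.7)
The plan is to exploit the transformation $(Z_{1,n},Z_{2,n})=(\min\{N_{1,n},N_{2,n}\},N_{2,n}-N_{1,n})$ from Step 1 of the excerpt and then work entirely with the censored chain $\{Y(n);n\geq 0\}$ on the busy states, whose interior and boundary transition probabilities are already listed in \eqref{one}. Since $\{Y(n)\}$ is a spatially homogeneous reflecting random walk on $\mathbb{Z}\times\mathbb{N}_0$ with level $Y_2$ (the minimum) and unbounded background $Y_1$ (the signed difference), I would adopt the matrix-analytic / Markov-additive point of view of \cite{limiya,miya2sided,miyatail}: seek a harmonic solution of the interior balance equations of the form $\pi_{m,l}(1)=r^m\phi_l$, identify the admissible $r$, and then upgrade this to the exact limit \eqref{bnmm}.

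Substituting the ansatz into the interior balance equation for $m\gg 1$ decouples into three regimes of the background variable $l$. For $l>0$ only the $(+)$-transitions in \eqref{one} appear, so trying $\phi_l=c_+ s_+^l$ yields a quadratic in $s_+$ whose coefficients depend linearly on $r$; the analogous computation for $l<0$ (using the $(-)$-transitions) yields a quadratic in $s_-$. Using the shorthand $\gamma_1=\widehat{\mu}_1\rho^2+\widehat{\lambda}_2$ and $\gamma_2=\widehat{\mu}_2\rho^2+\widehat{\lambda}_1$, I would check that the two quadratics admit compatible positive roots precisely when $r=\rho^2$, giving
\begin{displaymath}
s_+=\rho\,\frac{\gamma_1}{\gamma_2+\widehat{\lambda}_0},\qquad s_-=\rho\,\frac{\gamma_2}{\gamma_1+\widehat{\lambda}_0}.
\end{displaymath}
The junction equation at $l=0$, which uses the $(2)$-transitions of ray $d$, then pins down the ratios $\phi_{\pm 1}/\phi_0=(\gamma_{2,1}+\widehat{\lambda}_0/2)/\gamma_{2,1}$ and delivers the piecewise formula \eqref{bnm}. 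The strongly balanced claim is exactly the statement $s_+,s_-<1$, so that the piecewise-geometric $\phi$ is absolutely summable in $l$; the hypothesis $\widehat{\lambda}_0>|\widehat{\lambda}_2-\widehat{\lambda}_1+\rho^2(\widehat{\mu}_1-\widehat{\mu}_2)|$ of the theorem places us in this regime and guarantees that $\rho^2$ is the correct dominant rate rather than a sub-dominant eigenvalue, in analogy with Foley--McDonald's strongly pooled case for the standard GJSQ.

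The main obstacle is upgrading the existence of this (unique up to scalar) harmonic vector to the actual limit \eqref{bnmm}, rather than merely an exponential upper bound. For this I would invoke an $R$-positivity / convergence-parameter argument in the spirit of \cite{limiya}: view $\{Y(n)\}$ as a level-independent QBD with countable background, show that $\rho^2$ is the convergence parameter of the associated rate matrix, and identify its left Perron eigenvector with $\{x_l\}$ up to normalisation. Two ingredients feed into this step: (i) a Foster--Lyapunov/drift estimate on the background chain to guarantee tightness, which is where the stability assumptions $\rho_1,\rho_2<\rho<1$ of Lemma \ref{stab} enter; and (ii) the two-sided geometric decay of $x_l$ obtained above, which supplies the dominated-convergence bounds needed to extract the leading term and to exclude any smaller decay rate. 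Combining these with the matrix-geometric representation of the stationary vector yields \eqref{bnmm}, and Corollary \ref{corol} for the idle states then follows immediately by substituting the tail estimate into the idle--busy identity \eqref{e1}.
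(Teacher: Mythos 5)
Your plan is structurally the same as the paper's proof: pass to the censored chain $\{Y(n)\}$ on busy states, view it as a QBD with countable background given by the difference $N_{2,n}-N_{1,n}$, plug in the product ansatz $\pi_{m,l}(1)=r^m\phi_l$, solve the two one-sided quadratics in $l>0$ and $l<0$, match at $l=0$ via the ray-$d$ boundary equation, and then invoke the Li--Miyazawa machinery to promote the harmonic vector to the actual limit. The quantities you extract, $r=\rho^2$, $s_+=\rho\gamma_1/(\gamma_2+\widehat{\lambda}_0)$, $s_-=\rho\gamma_2/(\gamma_1+\widehat{\lambda}_0)$, and the ratios $\phi_{\pm1}/\phi_0$, agree with the paper's $\theta_{\min}$, $\eta_{\min}$ and the coefficients in \eqref{bnm}.

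There are two issues to fix. First, your characterisation of \textit{strong balance} is wrong: you equate it with $s_+,s_-<1$ (absolute summability of $\phi$ in $l$), but under the theorem's hypothesis $\widehat{\lambda}_0>|\gamma_1-\gamma_2|$ one already has $s_+,s_-<1$ automatically, so this cannot be the content of the ``moreover'' part. Strong balance in the sense of \cite[Def.~2.2]{limiya} is the stronger requirement that the level dominates the background under the \emph{same} geometric scaling, i.e.\ $\lim_{l\to\pm\infty}\rho^{\mp2l}x_{\pm l}(\rho^{-2})=0$, which is $s_+<\rho^2$ and $s_-<\rho^2$, exactly $\gamma_1<\rho(\gamma_2+\widehat{\lambda}_0)$ and $\gamma_2<\rho(\gamma_1+\widehat{\lambda}_0)$. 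Second, your ``(i) Foster--Lyapunov on the background to guarantee tightness'' conflates two different steps. Stability of the censored chain (Lemma~\ref{stab}) needs only $\rho_1,\rho_2,\rho<1$; the strictly stronger hypothesis $\max\{\rho_1,\rho_2\}<\rho$ enters specifically to verify the boundary condition $\boldsymbol{\pi}_0\mathbf{y}<\infty$ of \cite[Thm.~2.1]{limiya}, i.e.\ $\sum_{l\geq0}\rho^{-l}\bigl(\mathbb{P}(N_1=l,N_2=0,C=1)+\mathbb{P}(N_1=0,N_2=l,C=1)\bigr)<\infty$, for which the paper leans on Foley--McDonald's Proposition~1 (itself based on Meyn--Tweedie's Theorem~14.3.7). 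You should name that specific condition and note the role of $\max\{\rho_1,\rho_2\}<\rho$ there rather than attributing it to background-chain tightness. You also need to rule out, via the condition $\widehat{\lambda}_0>|\gamma_1-\gamma_2|$, the three spurious sign choices of $(\eta_{\min},\theta_{\min})$ in the quadratics so that $A_*(\rho^{-2})$ is $1$-positive; this is where that hypothesis enters, not merely as a summability aid.
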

\begin{proof}
The proof is based on a series of results, and is given in Section \ref{decayp}.
\end{proof}

Having known the exact tail asymptotic properties for $C(t) = 1$ (i.e., when the server is busy), we can investigate the tail asymptotic properties
for $C(t) = 0$ (i.e., when the server is idle) based on the relationship given in \eqref{clop} (which is similar to \eqref{e1}).

\begin{corollary}\label{corol}
Based on the relationship given in \eqref{clop},
\begin{equation}
\rho^{-2m}\pi_{m,l}(0)\sim \frac{\mu}{\lambda+\alpha_{1}+\alpha_{2}}x_{l}(\rho^{-2}),\,l\in\mathbb{Z},\label{idle}
\end{equation}
where $x_{l}(\rho^{-2})$, $l\in\mathbb{Z}$ as given in Theorem \ref{decay}.
\end{corollary}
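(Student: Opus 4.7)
The plan is to read off equation \eqref{clop} (the analog of \eqref{e1} in the transformed $(\min,\text{difference})$ coordinates) and combine it pointwise with Theorem \ref{decay}. Equation \eqref{e1} says $(\lambda+\alpha_{1}\mathbf{1}_{\{i>0\}}+\alpha_{2}\mathbf{1}_{\{j>0\}})p_{i,j}(0)=\mu p_{i,j}(1)$, which relates the stationary mass at an idle state to the mass at the corresponding busy state by a purely local, algebraic factor. Under the transformation $Z_{1,n}=\min\{N_{1,n},N_{2,n}\}$, $Z_{2,n}=N_{2,n}-N_{1,n}$, a state $(m,l)$ with $m\geq 1$ corresponds to $(i,j)=(m,m+l)$ if $l\geq 0$ and to $(i,j)=(m-l,m)$ if $l\leq 0$; in both cases $i\geq 1$ and $j\geq 1$, so the indicator factors are both active, and \eqref{clop} collapses to
\[
\pi_{m,l}(0)=\frac{\mu}{\lambda+\alpha_{1}+\alpha_{2}}\pi_{m,l}(1),\qquad m\geq 1,\ l\in\mathbb{Z}.
\]

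With this exact identity in hand, I would multiply both sides by $\rho^{-2m}$ and let $m\to\infty$ for each fixed $l\in\mathbb{Z}$. Theorem \ref{decay} already provides the limit on the right-hand side: $\rho^{-2m}\pi_{m,l}(1)\to x_{l}(\rho^{-2})$, with $x_{l}(\rho^{-2})$ as displayed in \eqref{bnm}. Since the prefactor $\mu/(\lambda+\alpha_{1}+\alpha_{2})$ does not depend on $m$, the limit passes through, yielding exactly \eqref{idle}. Observe that the range $m\geq 1$ is harmless for an asymptotic as $m\to\infty$ with $l$ fixed: we can restrict attention to $m$ sufficiently large (in particular $m\geq \max\{1,|l|\}$ is automatic once $m\geq 1$ since both underlying orbit coordinates are nonnegative anyway).

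There is essentially no hard step: the only thing to verify carefully is the form of \eqref{clop}, namely that the Markov-chain relation of idle to busy at $(m,l)$ for $m\geq 1$ carries no off-diagonal contributions in $l$ and that both indicator functions in the $(i,j)$ version evaluate to $1$. This is the same observation used in deriving \eqref{e1}: conditional on the server being idle, the only outgoing transitions are Poisson arrivals (rate $\lambda$) and the two retrial streams (rates $\alpha_{1}$ when $i>0$, $\alpha_{2}$ when $j>0$), while the only incoming transition from a busy state at the same level is the service completion at rate $\mu$. Once this is checked in the transformed coordinates, Corollary \ref{corol} is an immediate one-line consequence of Theorem \ref{decay}.
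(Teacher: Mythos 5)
Your argument is correct and follows essentially the same route the paper takes: you establish \eqref{clop} by transporting \eqref{e1} to the $(\min,\text{difference})$ coordinates (noting that $m\geq 1$ forces both indicators in \eqref{e1} to be active), and then multiply by $\rho^{-2m}$ and pass to the limit using Theorem \ref{decay}. The paper simply asserts \eqref{clop} without spelling out the coordinate check, so your write-up is a slightly more explicit version of the same one-line proof.
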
 
\begin{proof}
We consider the Markov chain $Z(n)=\{(Z_{1,n},Z_{2,n},C_{n});n\geq 0\}$ where $Z_{1,n}=min\{N_{1,n},N_{2,n}\}$, $Z_{2,n}=N_{2,n}-N_{1,n}$; see Figure \ref{struc}. Knowing the asymptotic properties of $\pi_{m,l}(1)$, we will investigate those of $\pi_{m,l}(0)=\lim_{n\to\infty}\mathbb{P}(min\{N_{1,n},N_{2,n}=m\},N_{2,n}-N_{1,n}=l,C_{n}=0)$. Note that the equilibrium equations that deal with the idle states are for $m>0$,
\begin{equation}
\pi_{m,l}(0)(\lambda+\alpha_{1}+\alpha_{2})=\mu\pi_{m,l}(1),\,l\in\mathbb{Z},\label{clop}
\end{equation}
which obviously leads to \eqref{idle}.
\end{proof}

Therefore, we shown that under the conditions $max\{\rho_{1},\rho_{2}\}<\rho<1$, $|\gamma_{1}-\gamma_{2}|<\widehat{\lambda}_{0}$, the tail decay rate for the shortest orbit queue in our GJSOQ system equals $\rho^{2}$. It is easy to see that this is the square of the tail decay rate for the orbit queue length of the \textit{reference system} mentioned at the beginning of this section. The \textit{reference system} operates as follows: Jobs arrive according to a Poisson process with rate $\lambda=\lambda_{0}+\lambda_{1}+\lambda_{2}$, and if they find the server available, they occupy it and get served after an exponentially distributed time with rate $\mu$. Otherwise they enter an infinite capacity orbit queue. Orbiting jobs retry to access the server according to the following rule: If there is only one job in orbit it retries after exponentially distributed time with rate $\alpha_{1}$ (note that this is not a restriction and whatever the retrial rate is in this case does not affect the final result). If there are more than one orbiting jobs, the first job in orbit queue retries with rate $\alpha_{1}+\alpha_{2}$ (This situation is similar with the case where there is a basic server in orbit queue, which transmit jobs to the service station at a rate $\alpha_{1}$, and in case there are more than one orbiting jobs, an additional server, which transmits jobs at a rate $\alpha_{2}$ helps the former one). Details on the stationary orbit queue length distribution of the reference system as well as its decay rate are given in Appendix \ref{app0}.

\begin{remark}
Another point of interest refers to the determination of the decay rate of the marginal distribution, since Theorem \ref{decay} refers to the decay rate of the stationary minimum orbit queue length distribution for a fixed value of the orbit queue difference, and server state. Clearly, the states of the server can be aggregated, since they are finite. However, the aggregation on the difference of queue sizes is not a trivial task and extra conditions maybe are needed. One may expect that
\begin{equation}
\begin{array}{c}
\lim_{m\to\infty}\rho^{-2m}\mathbb{P}(min\{N_{1},N_{2}\}=m)=\frac{1}{1-\mu}\sum_{l\in\mathbb{Z}}x_{l}(\rho^{-2}),
\end{array}\label{mlm}
\end{equation}
where $\{(N_{1},N_{2})\}$, the stationary version of $\{(N_{1,n},N_{2,n});n\geq 0\}$. This can be obtained using Theorem \ref{decay} and summing \eqref{bnmm}, \eqref{idle} over the difference of the two orbit queues. Note here that the sum in the right hand side of \eqref{mlm} definitely converges under the \textit{strongly balanced} condition (see Theorem \ref{decay}), since in such a case $\frac{\rho\gamma_{2}}{\widehat{\lambda}_{0}+\gamma_{1}}<\rho^{2}<1$, $\frac{\rho\gamma_{1}}{\widehat{\lambda}_{0}+\gamma_{2}}<\rho^{2}<1$. More importantly, under the conditions of Theorem 1, we always have $\frac{\rho\gamma_{1}}{\gamma_{2}+\widehat{\lambda}_{0}}<1$, $\frac{\rho\gamma_{1}}{\gamma_{2}+\widehat{\lambda}_{0}}<1$. Indeed, under conditions of Theorem 1, $\rho<1$, and $-\widehat{\lambda}_{0}<\gamma_{1}-\gamma_{2}<\widehat{\lambda}_{0}$. Thus, $\frac{\rho\gamma_{1}}{\gamma_{2}+\widehat{\lambda}_{0}}<1\Leftrightarrow\rho\gamma_{1}-\gamma_{2}<\widehat{\lambda}_{0}$, which is true since $\rho\gamma_{1}-\gamma_{2}<\gamma_{1}-\gamma_{2}<\widehat{\lambda}_{0}$. Similarly, $\frac{\rho\gamma_{2}}{\gamma_{1}+\widehat{\lambda}_{0}}<1\Leftrightarrow\rho\gamma_{2}-\gamma_{1}<\widehat{\lambda}_{0}$, which is true since $\rho\gamma_{2}-\gamma_{1}<\gamma_{2}-\gamma_{1}<\widehat{\lambda}_{0}$.
 The major problem to be resolved comes from the fact that since the difference of the two orbit queues is unbounded, in order to obtain the left hand side of \eqref{mlm}, it requires to verify the exchange of the summation and the limit. It seems that following the lines in \cite[Theorem 1.5]{miya2sided} by using a different framework based on \cite{miyatail}, and focusing on the \textit{weak} decay rates we can show that the marginal distribution of the $min\{N_{1},N_{2}\}$, under the same assumptions as given in Theorem \ref{decay}, also decays with rate $\rho^{2}$. 
 
 Moreover, for the standard GJSQ the authors in \cite{limiya} also focused on the decay rate for the marginal probabilities of the $\min\{Q_{1},Q_{2}\}$; see Corollary 3.2.1, Theorem 2.2.1 and Corollary 2.2.1 in \cite{limiya}. Following their lines, under the conditions of Theorem \ref{decay}, let
\begin{displaymath}
\lim_{l\to\infty}\frac{\pi_{0,l,1}}{x_{l}(\rho^{-2})}=s,
\end{displaymath}
where $\pi_{0,l,1}$, $l\geq 0$ is the stationary probability of an empty orbit queue for the censored Markov chain $\{Y(n);n\geq 0\}$, and $x_{l}(\rho^{-2})$ as given in \eqref{bnm}. Following \cite[Corollary 2.2.1]{limiya}, if $0<s<\infty$, the marginal distribution of $\min\{Q_{1},Q_{2}\}$ for the busy states decays geometrically with rate $\rho^{2}$. Now, using the relation in \eqref{clop}, the marginal distribution of $\min\{Q_{1},Q_{2}\}$ for the idle states decays also geometrically with the same rate. Thus, \eqref{mlm} has been proved. Therefore, the boundedness of $\pi_{0,l,1}/x_{l}(\rho^{-2})$ is very crucial in proving that the marginal probabilities of $\min\{Q_{1},Q_{2}\}$ decay geometrically with rate $\rho^{2}$. We postpone this verification in a future work. 
\end{remark}
\begin{remark}
In \cite{sakuma}, the authors proved that the tail decay rate of the stationary minimum queue length in the standard symmetric JSQ system fed by PH arrivals (no dedicated traffic) is the square of the decay rate for the queue length in the standard PH/M/2. Such a system was described by a Markov-modulated two-dimensional random walk in the quarter plane. They proved this result based on matrix analytic approach and standard results on the decay rates. We believe that it is possible to extend their approach to cope with Markov-modulated two-dimensional random walk in the half plane, and get the same result as in Theorem \ref{decay}. 
\end{remark}
\subsection{A heuristic approach for stationary approximations}\label{heur}
Our aim here is to develop a scheme to obtain approximations for the joint orbit queue length distribution for each server state, valid when one of the queue lengths is large. The main results are summarized in Lemmas \ref{sym}, and \ref{asym}, where we distinguish the analysis to the symmetric and the asymmetric case, respectively. With the term symmetric, we mean that $\lambda_{1}=\lambda_{2}\equiv\lambda_{+}$, $\alpha_{1}=\alpha_{2}\equiv \alpha$. 

We have to note that although the proofs of Lemmas \ref{sym}, and \ref{asym} (see subsections \ref{prsym}, \ref{prasym}, respectively) are similar, the asymmetric case reveals additional difficulties in the sense that we need further conditions to ensure that the approximated probabilities are well defined.

The main result for the symmetric case is the following.\begin{lemma}\label{sym}
For $\widehat{\lambda}<2\widehat{\mu}$,
\begin{equation}
\begin{array}{rl}
p_{i,j}(1)\sim& c\left\{\begin{array}{ll}
\left(\frac{\widehat{\lambda}(\widehat{\lambda}^{2}+4\widehat{\mu}(\widehat{\lambda}_{0}+\widehat{\lambda}_{+}))}{2\widehat{\mu}(\widehat{\lambda}^{2}+4\widehat{\mu}\widehat{\lambda}_{+})}\right)^{i}\left(\frac{\widehat{\lambda}(\widehat{\lambda}^{2}+4\widehat{\lambda}_{+}\widehat{\mu})}{2\widehat{\mu}(\widehat{\lambda}^{2}+4(\widehat{\lambda}_{0}+\widehat{\lambda}_{+})\widehat{\mu})}\right)^{j}[1+A_{-}\left(\frac{4\widehat{\mu}(\widehat{\lambda}_{0}+\widehat{\lambda}_{+})(\widehat{\lambda}^{2}+4\widehat{\mu}\widehat{\lambda}_{+})^{2}}{\widehat{\lambda}^{2}(\widehat{\lambda}^{2}+4\widehat{\mu}(\widehat{\lambda}_{0}+\widehat{\lambda}_{+}))^{2}}\right)^{i}],&i<j,\,j\gg 1,\vspace{2mm}\\
\left(\frac{\widehat{\lambda}}{2\widehat{\mu}}\right)^{i+j}\left(\frac{\widehat{\lambda}^{2}+4\widehat{\lambda}_{+}\widehat{\mu}}{\widehat{\lambda}(\widehat{\lambda}+2\widehat{\mu})}\right),&i=j\gg 1,\vspace{2mm}\\
\left(\frac{\widehat{\lambda}(\widehat{\lambda}^{2}+4\widehat{\mu}(\widehat{\lambda}_{0}+\widehat{\lambda}_{+}))}{2\widehat{\mu}(\widehat{\lambda}^{2}+4\widehat{\mu}\widehat{\lambda}_{+})}\right)^{j}\left(\frac{\widehat{\lambda}(\widehat{\lambda}^{2}+4\widehat{\lambda}_{+}\widehat{\mu})}{2\widehat{\mu}(\widehat{\lambda}^{2}+4(\widehat{\lambda}_{0}+\widehat{\lambda}_{+})\widehat{\mu}}\right)^{i}[1+A_{-}\left(\frac{4\widehat{\mu}(\widehat{\lambda}_{0}+\widehat{\lambda}_{+})(\widehat{\lambda}^{2}+4\widehat{\mu}\widehat{\lambda}_{+})^{2}}{\widehat{\lambda}^{2}(\widehat{\lambda}^{2}+4\widehat{\mu}(\widehat{\lambda}_{0}+\widehat{\lambda}_{+}))^{2}}\right)^{j}],&j<i,\,i\gg 1,
\end{array}\right.\vspace{2mm}\\
p_{i,j}(0)=&\frac{\mu}{\lambda+\alpha (1_{\{i\geq 1\}}+1_{\{j\geq 1\}})}p_{i,j}(1),
\end{array}
\end{equation}
where $c$ be the normalization constant, and
\begin{displaymath}
\begin{array}{lcr}
A_{-}=\frac{\lambda(\lambda+\alpha)+\widehat{\mu}(1-x_{-}(1+\frac{\lambda+\alpha}{\lambda+2\alpha}))+\frac{\lambda_{+}(\lambda+\alpha)}{x_{+}}}{\widehat{\mu}(\frac{\lambda+\alpha}{\lambda+2\alpha}x_{-}+x_{+}-1)-\lambda(\lambda+\alpha)-\frac{\lambda_{+}(\lambda+\alpha)}{x_{+}}},&
x_{+}=\frac{\widehat{\lambda}(\widehat{\lambda}^{2}+4\widehat{\mu}(\widehat{\lambda}_{0}+\widehat{\lambda}_{+}))}{2\widehat{\mu}(\widehat{\lambda}^{2}+4\widehat{\mu}\widehat{\lambda}_{+})},&x_{-}=\frac{\widehat{\lambda}_{0}+\widehat{\lambda}_{+}}{\widehat{\mu}x_{+}}.
\end{array}
\end{displaymath}
\end{lemma}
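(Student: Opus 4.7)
The plan is to implement the heuristic outlined in the preamble: use \eqref{e1} to eliminate the idle-server probabilities pointwise, insert a two-term product-form ansatz for $p_{i,j}(1)$ in each of the three interior sub-regions --- the angles $r_2=\{j>i\ge 1\}$ and $r_1=\{i>j\ge 1\}$, and the ray $d=\{i=j\ge 1\}$ --- and pin down the unknown decay rates and compensation constant by enforcing the equilibrium equation on the ray $d$ together with the symmetry $p_{i,j}(1)=p_{j,i}(1)$ imposed by $\lambda_1=\lambda_2$, $\alpha_1=\alpha_2$.

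Concretely, \eqref{e1} yields $p_{i,j}(0)=\tfrac{\mu}{\lambda+\alpha(\mathbf{1}_{\{i>0\}}+\mathbf{1}_{\{j>0\}})}p_{i,j}(1)$, and substituting into \eqref{e2} for $i,j\ge 1$, after clearing denominators by $\lambda+2\alpha$, gives in $r_2$ a closed linear recursion of the form
\[
(\widehat{\lambda}+2\widehat{\mu})\,p_{i,j}(1)=\widehat{\lambda}_+\,p_{i,j-1}(1)+(\widehat{\lambda}_0+\widehat{\lambda}_+)\,p_{i-1,j}(1)+\widehat{\mu}\bigl(p_{i+1,j}(1)+p_{i,j+1}(1)\bigr),
\]
and the swapped version in $r_1$. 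Plugging the ansatz $p_{i,j}(1)=x^{\,i}y^{\,j}$ into the $r_2$-recursion yields a characteristic curve which, for each fixed $y$, is quadratic in $x$ with the two roots $x_\pm$ satisfying $x_+x_-=(\widehat{\lambda}_0+\widehat{\lambda}_+)/\widehat{\mu}$. The separate equation on the ray $d$, combined with the symmetry ansatz $p_{i,i}(1)\propto (x_+y)^i$, forces $x_+y=(\widehat{\lambda}/(2\widehat{\mu}))^2$ and fixes $x_+$, $y$ and the diagonal prefactor $(\widehat{\lambda}^2+4\widehat{\lambda}_+\widehat{\mu})/(\widehat{\lambda}(\widehat{\lambda}+2\widehat{\mu}))$ exactly as in the lemma. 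The compensation constant $A_-$ is then determined as the unique coefficient for which the two-term combination $c\bigl(x_+^{\,i}+A_-\,x_-^{\,i}\bigr)y^{\,j}$ additionally satisfies the diagonal equation; this is a linear equation in $A_-$ whose solution is the displayed rational function of $\widehat{\lambda}$, $\widehat{\mu}$, $\widehat{\lambda}_0$, $\widehat{\lambda}_+$. Finally, $p_{i,j}(0)$ is recovered directly from \eqref{e1}.

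The main obstacles I anticipate are twofold: (i) selecting the admissible pair of roots of the quadratic characteristic curve so that $|x_\pm|<1$ and $|y|<1$, which is precisely where the hypothesis $\widehat{\lambda}<2\widehat{\mu}$ enters and guarantees the two-term ansatz is summable; and (ii) verifying that the two-term truncation is internally consistent, in the sense that the residuals generated by the ansatz on the axes $\{i=0\}$ and $\{j=0\}$ are of strictly smaller geometric order than the leading products as $\max\{i,j\}\to\infty$, so that the omitted higher-order compensation terms do not contribute to the stated asymptotics. The remaining axis and corner equations \eqref{e3}--\eqref{e7} serve only to fix the low-index probabilities in terms of the overall normalization constant $c$, without altering the bulk behaviour described in the statement.
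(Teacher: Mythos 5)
Your outline matches the paper's strategy through the determination of $\gamma=\widehat{\lambda}/(2\widehat{\mu})$, $\delta$, and the diagonal prefactor, but it misidentifies where the compensation constant $A_-$ comes from, and that gap is fatal to the argument as written. You claim that $A_-$ is fixed by requiring the two-term combination $c\bigl(x_+^{\,i}+A_-x_-^{\,i}\bigr)y^{\,j}$ to ``additionally satisfy the diagonal equation,'' and you explicitly dismiss the axis equations \eqref{e3}--\eqref{e7} as ``serving only to fix the low-index probabilities.'' This has the logic backwards. Since $x_-/x_+<1$, the term $A_-x_-^{\,i}$ is geometrically subdominant to $x_+^{\,i}$ as $i\to\infty$; on the diagonal (where $i=j\gg1$) it contributes nothing at leading order, so the diagonal equation imposes no constraint on $A_-$ whatsoever. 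In fact, once $\gamma$, $\delta$, and $Q(0)$ are pinned down by the interior and near-diagonal relations \eqref{q0}--\eqref{q3}, those equations are exhausted and cannot yield a further condition. The correct source of $A_-$ is the boundary-layer equation at $i=0$: the paper substitutes \eqref{e1} into the axis balance equation \eqref{e4} to obtain \eqref{ee3}, in which the retrial term carries the asymmetric factor $\widehat{\mu}\,(\lambda+\alpha)/(\lambda+2\alpha)$ because only one orbit is nonempty on the vertical axis. Imposing \eqref{f0} (the $i=0$ instance of the recursion for $F$) is precisely the linear equation in $A_-$ that produces the displayed expression — and this is visible in the answer itself, since $A_-$ contains the ratio $(\lambda+\alpha)/(\lambda+2\alpha)$, which no interior or diagonal equation ever generates. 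Your proposal would produce either a vacuous constraint or a different, incorrect $A_-$ lacking that factor, so the axis equation cannot be sidelined; it is the equation that closes the boundary layer.
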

\begin{proof}
The proof is given in Section \ref{prsym}.
\end{proof}
The main result for the asymmetric case is the following.
\begin{lemma}\label{asym}
When, $\rho<1$, and $\widehat{\lambda}_{0}>|\rho^{2}(\widehat{\mu}_{2}-\widehat{\mu}_{1})+\widehat{\lambda}_{1}-\widehat{\lambda}_{2}|$,
\begin{equation}
\begin{array}{rl}
p_{i,j}(1)\sim c&\left\{\begin{array}{ll}
\sqrt{\frac{\epsilon_{-}}{\epsilon_{+}}}\left(\frac{\rho(\widehat{\lambda}_{2}+\rho^{2}\widehat{\mu}_{1})}{\widehat{\lambda}_{0}+\widehat{\lambda}_{1}+\rho^{2}\widehat{\mu}_{2}}\right)^{i}\left(\frac{\rho(\widehat{\lambda}_{0}+\widehat{\lambda}_{1}+\rho^{2}\widehat{\mu}_{2})}{\widehat{\lambda}_{2}+\rho^{2}\widehat{\mu}_{1}}\right)^{j}[1+A_{-}\left(\frac{x_{-}}{x_{+}}\right)^{i}],&j>i,\,j\gg 1,\vspace{2mm}\\
\rho^{i+j} [\frac{\widehat{\lambda}_{1}+\rho^{2}\widehat{\mu}_{2}}{\widehat{\lambda}(1+\rho)}\sqrt{\frac{\epsilon_{+}}{\epsilon_{-}}}+\frac{\widehat{\lambda}_{2}+\rho^{2}\widehat{\mu}_{1}}{\widehat{\lambda}(1+\rho)}\sqrt{\frac{\epsilon_{-}}{\epsilon_{+}}}],&i=j\gg 1,\\
\sqrt{\frac{\epsilon_{+}}{\epsilon_{-}}}\left(\frac{\rho(\widehat{\lambda}_{1}+\rho^{2}\widehat{\mu}_{2})}{\widehat{\lambda}_{0}+\widehat{\lambda}_{2}+\rho^{2}\widehat{\mu}_{1}}\right)^{i}\left(\frac{\rho(\widehat{\lambda}_{0}+\widehat{\lambda}_{2}+\rho^{2}\widehat{\mu}_{1})}{\widehat{\lambda}_{1}+\rho^{2}\widehat{\mu}_{2}}\right)^{j}[1+B_{-}\left(\frac{y_{-}}{y_{+}}\right)^{j}],&i>j,\,i\gg 1,
\end{array}\right.\\
p_{i,j}(0)=&\frac{\mu}{\lambda+\alpha_{1}1_{\{i>0\}}+\alpha_{2}1_{j>0}}p_{i,j}(1),
\end{array}\label{sol}
\end{equation}
where $c$ be the normalization constant, $\epsilon_{\pm}$ are given in \eqref{epsi}, and
\begin{displaymath}
\begin{array}{rl}
x_{+}=\frac{\rho(\widehat{\lambda}_{0}+\widehat{\lambda}_{1}+\rho^{2}\widehat{\mu}_{2})}{\widehat{\lambda}_{2}+\rho^{2}\widehat{\mu}_{1}},&x_{-}=\frac{(\widehat{\lambda}_{0}+\widehat{\lambda}_{1})(\widehat{\lambda}_{2}+\rho^{2}\widehat{\mu}_{1})}{\widehat{\mu}_{1}\rho(\widehat{\lambda}_{0}+\widehat{\lambda}_{1}+\rho^{2}\widehat{\mu}_{2})},\\
y_{-}=\frac{(\widehat{\lambda}_{0}+\widehat{\lambda}_{2})(\widehat{\lambda}_{1}+\rho^{2}\widehat{\mu}_{2})}{\widehat{\mu}_{2}\rho(\widehat{\lambda}_{0}+\widehat{\lambda}_{2}+\rho^{2}\widehat{\mu}_{1})},&y_{+}=\frac{\rho(\widehat{\lambda}_{0}+\widehat{\lambda}_{2}+\rho^{2}\widehat{\mu}_{1})}{\widehat{\lambda}_{1}+\rho^{2}\widehat{\mu}_{2}}\vspace{2mm}\\
A_{-}=&\frac{x_{+}^{2}(\widehat{\mu}_{1}\frac{\lambda+\alpha_{2}}{\lambda+\alpha_{1}+\alpha_{2}}+\frac{\lambda_{2}(\lambda+\alpha_{2})}{\rho^{2}})-x_{+}(\lambda(\lambda+\alpha_{2})+\widehat{\mu}_{2})+\rho^{2}\widehat{\mu}_{2}}{x_{+}(\lambda(\lambda+\alpha_{2})+\widehat{\mu}_{2})-[(\lambda_{0}+\lambda_{1})(\lambda+\alpha_{2})+\rho^{2}\widehat{\mu}_{2}]-\frac{\lambda_{2}(\lambda+\alpha_{2})}{\rho^{2}}x_{+}^{2}},\vspace{2mm}\\
B_{-}=&\frac{y_{+}^{2}(\widehat{\mu}_{2}\frac{\lambda+\alpha_{1}}{\lambda+\alpha_{1}+\alpha_{2}}+\frac{\lambda_{1}(\lambda+\alpha_{1})}{\rho^{2}})-y_{+}(\lambda(\lambda+\alpha_{1})+\widehat{\mu}_{1})+\rho^{2}\widehat{\mu}_{1}}{y_{+}(\lambda(\lambda+\alpha_{1})+\widehat{\mu}_{1})-[(\lambda_{0}+\lambda_{2})(\lambda+\alpha_{1})+\rho^{2}\widehat{\mu}_{1}]-\frac{\lambda_{1}(\lambda+\alpha_{1})}{\rho^{2}}y_{+}^{2}}.
\end{array}
\end{displaymath}
\end{lemma}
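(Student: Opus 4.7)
The plan is to work directly with the equilibrium equations \eqref{e1}--\eqref{e7} and first use \eqref{e1} to eliminate the idle-state probabilities via $p_{i,j}(0)=\mu\, p_{i,j}(1)/(\lambda+\alpha_{1}1_{\{i>0\}}+\alpha_{2}1_{\{j>0\}})$, which immediately yields the second line of \eqref{sol}. Substituting this into \eqref{e2} reduces the interior balance to a single scalar recurrence for $p_{i,j}(1)$ whose coefficients change discontinuously across the diagonal because of the indicators $H(i-j+1)$ and $H(j-i+1)$. We therefore obtain two different interior recurrences, one valid in the region $j>i$ and another in $i>j$, coupled through the diagonal $i=j$ and the boundary axes.

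In the region $j\gg i$ I would insert the product-form ansatz $p_{i,j}(1)=x^i y^j$ into the reduced interior equation. The resulting quadratic kernel, combined with the knowledge from Theorem \ref{decay} that the minimum orbit queue length decays at rate $\rho^{2}$, suggests restricting attention to solutions on the curve $xy=\rho^{2}$. Solving then produces two candidate pairs $(x_{+},y_{+})$ and $(x_{-},y_{+})$ with a common $y$-component (to preserve the correct decay in $j$) but the two $x$-branches $x_{\pm}$ written in the statement. The heuristic trial solution in this region is the two-term combination $C_{II}(x_{+}^{i}+A_{-}x_{-}^{i})y_{+}^{j}$, mimicking the first two terms of a compensation-type series. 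The leading product form alone fails the vertical boundary balance coming from \eqref{e4},\eqref{e6} after elimination, and demanding that the truncated sum annihilate the coefficient of $y_{+}^{j}$ in that boundary equation fixes $A_{-}$ as the ratio in the lemma. An entirely symmetric argument in $i\gg j$, using the corresponding horizontal boundary \eqref{e3},\eqref{e5}, yields the analogous combination with $y_{\pm}$ and coefficient $B_{-}$.

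The two asymptotic pieces must agree on the diagonal. Substituting them both into the diagonal recurrence arising from region $d$ (again after using \eqref{e1}) produces two linear relations for the remaining overall multipliers $C_{II}$ and $C_{III}$. Solving the resulting $2\times 2$ system forces $C_{II}=c\sqrt{\epsilon_{-}/\epsilon_{+}}$ and $C_{III}=c\sqrt{\epsilon_{+}/\epsilon_{-}}$, with $c$ the normalisation constant and $\epsilon_{\pm}$ read off from the boundary coefficients as in \eqref{epsi}; setting $i=j$ in either piece then reproduces the middle line of \eqref{sol}.

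The main obstacle, and the reason the condition $\widehat{\lambda}_{0}>|\rho^{2}(\widehat{\mu}_{2}-\widehat{\mu}_{1})+\widehat{\lambda}_{1}-\widehat{\lambda}_{2}|$ appears in the hypothesis, is verifying that the two-term truncation is asymptotically self-consistent: one has to check that the kernel branches are real and positive and that $|x_{-}/x_{+}|<1$ and $|y_{-}/y_{+}|<1$, so that the correction terms genuinely decay relative to the leading ones as $i,j\to\infty$. A manipulation analogous to the one in the remark following Theorem \ref{decay} shows that this is precisely the asymmetric version of the strongly balanced condition, so under the stated hypothesis the approximation is well defined. Matching all boundary equations exactly would require appending further compensation terms in the spirit of \cite{ad2,ad3}, but since each new term is smaller by a factor $x_{-}/x_{+}$ or $y_{-}/y_{+}$, the truncation error has the same order as the first neglected term; this is the sense in which \eqref{sol} is an asymptotic, rather than exact, equality.
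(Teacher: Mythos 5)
Your proposal traces the same route as the paper's proof of Lemma \ref{asym}: eliminate the idle states via \eqref{e1}, insert a product-form ansatz into the interior balance equation with decay rate $\rho^{2}$ in the minimum direction, use the boundary balance in each of the regions $j\gg i$ and $i\gg j$ to fix a two-term compensation combination with coefficients $A_{-}$ and $B_{-}$, and match the two pieces on the diagonal to pin down the relative weights $\sqrt{\epsilon_{-}/\epsilon_{+}}$ and $\sqrt{\epsilon_{+}/\epsilon_{-}}$. The paper formally carries out the interior step in the rotated coordinates $m=i+j$, $n=j-i$ with a piecewise-in-$n$ ansatz, but that is only a change of variables relative to your $(i,j)$ formulation; the five relations \eqref{u1}--\eqref{u5} that emerge are precisely your kernel equations plus the diagonal-matching conditions.

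Where you go astray is in the attribution of what the hypothesis $\widehat{\lambda}_{0}>|\rho^{2}(\widehat{\mu}_{2}-\widehat{\mu}_{1})+\widehat{\lambda}_{1}-\widehat{\lambda}_{2}|$ delivers. A direct computation shows this condition is equivalent to $0<\delta_{+}<1$ and $0<\delta_{-}<1$, where $\delta_{+}=\frac{\widehat{\lambda}_{2}+\rho^{2}\widehat{\mu}_{1}}{\widehat{\lambda}_{0}+\widehat{\lambda}_{1}+\rho^{2}\widehat{\mu}_{2}}$ and $\delta_{-}$ is its mirror; these are the transverse (off-diagonal) decay factors of the interior product form, and their being below $1$ is what makes the Region-I expansion a proper decaying solution. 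The ratios $|x_{-}/x_{+}|<1$ and $|y_{-}/y_{+}|<1$ that you claim the hypothesis certifies are strictly stronger conditions: the paper records them separately as $(\widehat{\lambda}_{2}+\widehat{\mu}_{1}\rho^{2})\sqrt{(\widehat{\lambda}_{0}+\widehat{\lambda}_{1})/\widehat{\mu}_{1}}<\rho(\widehat{\lambda}_{0}+\widehat{\lambda}_{1}+\widehat{\mu}_{2}\rho^{2})$ and its symmetric counterpart, which do not follow from the lemma's hypothesis alone. You are conflating the ``strongly pooled'' condition (which the hypothesis corresponds to) with the ``strongly balanced'' condition of Theorem \ref{decay} (which governs $x_{+}>1$, $y_{+}>1$ and the decay of the compensation ratios). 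A related imprecision is the claim that both candidate pairs $(x_{+},y_{+})$ and $(x_{-},y_{+})$ lie on the curve $xy=\rho^{2}$: only the leading pair does; $x_{-}$ is the second root of the kernel quadratic at the fixed $y$-base $\rho\delta_{+}$, and $x_{-}\cdot\rho\delta_{+}\neq\rho^{2}$ in general. Neither slip alters the structure of the argument, but your justification for why the truncation is well defined would not survive a careful check.
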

\begin{proof}
The proof is given in Section \ref{prasym}.
\end{proof}
\begin{remark}
Note that in proving Lemma \ref{asym}, we need to have the condition $\widehat{\lambda}_{0}>|\rho^{2}(\widehat{\mu}_{2}-\widehat{\mu}_{1})+\widehat{\lambda}_{1}-\widehat{\lambda}_{2}|$. This condition was also needed in Theorem \ref{decay}, and corresponds to the so-called \textit{strongly pooled case} in \cite{foley}. This condition refers to the case where the proportion of smart jobs is large compared with the dedicated traffic, and results in balancing the orbit queue lengths. In such a case the orbit queues overload in
 tandem; see Figure \ref{fghm} (left). More importantly, by technical point of view, this condition is crucial in proving that the approximated join orbit queue length probabilities are well defined as given in Lemma \ref{asym}; see subsection \ref{prasym}.

 In case $\widehat{\lambda}_{0}\leq |\rho^{2}(\widehat{\mu}_{2}-\widehat{\mu}_{1})+\widehat{\lambda}_{1}-\widehat{\lambda}_{2}|$, i.e., \textit{the weakly pooled case}, there is a spectrum of possible drift directions, depending on how large is the proportion of the dedicated traffic. Moreover, the approximated joint orbit queue length distribution cannot any more written in such an elegant form and is not well defined. An illustration of both scenarios are given in Figure \ref{fghm}, where we performed some simulations. Note that in the strongly pooled case, orbit queue lengths grow in tandem as expected; Figure \ref{fghm} (left). In Figure \ref{fghm} (right), $\widehat{\lambda}_{0}< |\rho^{2}(\widehat{\mu}_{2}-\widehat{\mu}_{1})+\widehat{\lambda}_{1}-\widehat{\lambda}_{2}|$, and $\lambda_{1}>\lambda_{0}>\lambda_{2}$, $\alpha_{1}>\alpha_{2}$, we observe that $N_{2}(t)$ lags behind $N_{1}(t)$.  \begin{figure}[H]
\centering
\includegraphics[scale=0.4]{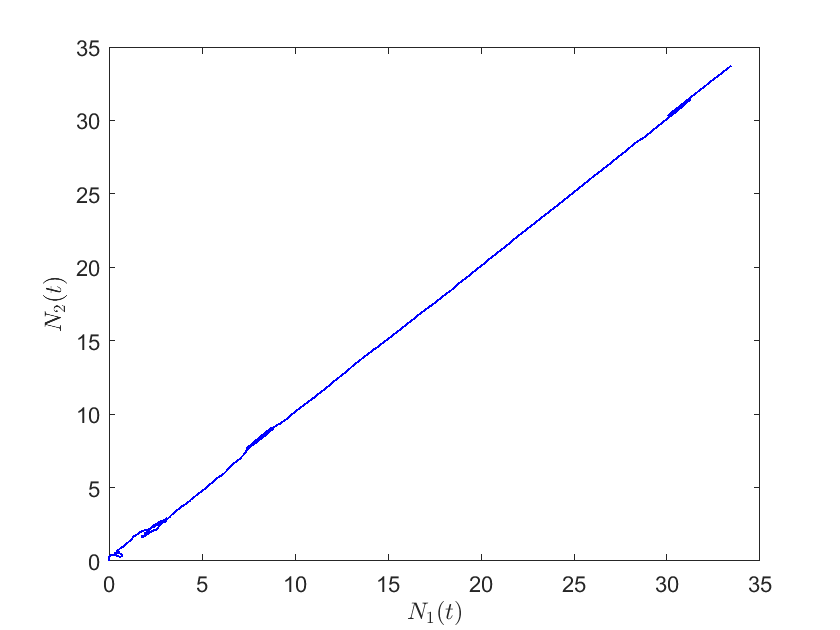}\includegraphics[scale=0.4]{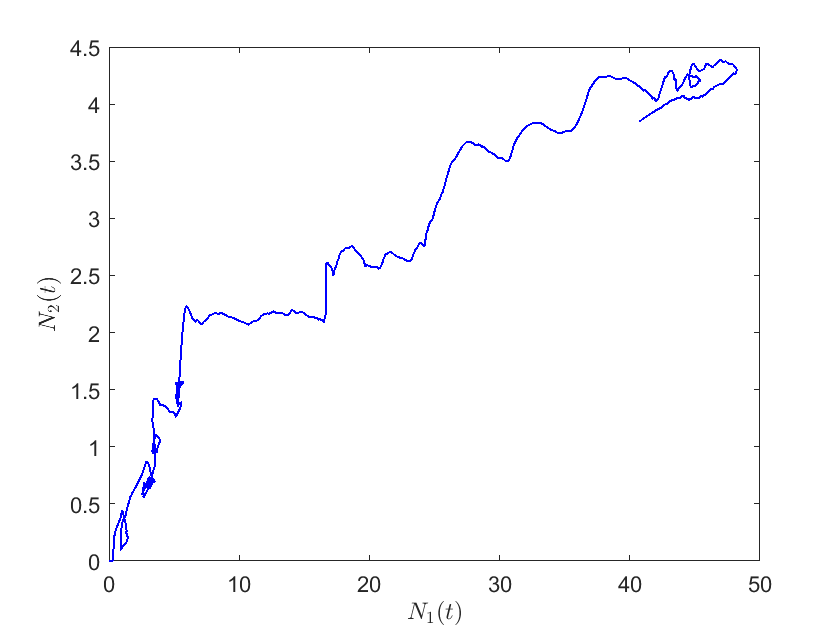}
\caption{Orbit dynamics when $\rho<1$, $\alpha_{1}>\alpha_{2}$, and $\widehat{\lambda}_{0}> |\rho^{2}(\widehat{\mu}_{2}-\widehat{\mu}_{1})+\widehat{\lambda}_{1}-\widehat{\lambda}_{2}|$ (left), and when $\widehat{\lambda}_{0}< |\rho^{2}(\widehat{\mu}_{2}-\widehat{\mu}_{1})+\widehat{\lambda}_{1}-\widehat{\lambda}_{2}|$, and $\lambda_{1}>\lambda_{0}>\lambda_{2}$ (right).}
\label{fghm}
\end{figure}
\end{remark}
\section{Proofs of main results}\label{prmain}
To improve the readability of the paper we group in this section the proofs of the main results presented in section \ref{main}.
\subsection{Proof of Theorem \ref{decay}}\label{decayp}
The proof follows the lines in \cite{limiya}, and a series of results are needed to be proven. Note that $\{Y(n);n\geq 0\}$ is a discrete time QBD process with the following block structured transition matrix 
\begin{displaymath}
P_{Y}=\begin{pmatrix}
L_{0}&A_{1}&&&\\
A_{-1}&A_{0}&A_{1}&&\\
&A_{-1}&A_{0}&A_{1}&\\
&&A_{-1}&A_{0}&A_{1}\\
&&&\ddots&\ddots&\ddots
\end{pmatrix},
\end{displaymath}
where the infinite dimensional matrices $L$, $A_{i}$, $i=0,\pm 1$ are
\begin{displaymath}
\begin{array}{rl}
(L)_{l,l^{\prime}}=&\mathbb{P}(Y(n+1)=(0,l^{\prime},1)|Y(n)=(0,l,1)),\,l,l^{\prime}\in\mathbb{Z},\\
(A_{i})_{l,l^{\prime}}=&\mathbb{P}(Y(n+1)=(m+i,l^{\prime},1)|Y(n)=(m,l,1)),\,l,l^{\prime}\in\mathbb{Z},\,m\geq 1,
\end{array}
\end{displaymath}
with their elements (i.e., the one step transition probabilities) are given in \eqref{one}. Under the stability condition, the stationary distribution exists. Denote it by the row vector $\boldsymbol{\pi}=(\boldsymbol{\pi_{0}},\boldsymbol{\pi_{1}},\ldots)$. It is well known that the stationary distribution has the following matrix geometric form:
\begin{equation}
\boldsymbol{\pi_{m}}=\boldsymbol{\pi_{0}}R^{m},\,m\geq 1,\label{solma}
\end{equation}
where $R$ is the minimal nonnegative solution of the equation
\begin{displaymath}
R=A_{1}+RA_{0}+R^{2}A_{-1}.
\end{displaymath}
Since the size of $R$ is infinite, conditions for geometric tail decay rate of \eqref{solma} are given in \cite{limiya}:
\begin{proposition}
(Theorem 2.1 in \cite{limiya}) Let $A\equiv A_{-1}+A_{0}+A_{1}$ is irreducible and aperiodic, and assume that the Markov additive process generated by $\{A_{i},i=0,\pm 1\}$ is 1-arithmetic. If there exists $z>1$ and positive vectors $\mathbf{x}$, $\mathbf{y}$ such that
\begin{eqnarray}
\mathbf{x}=\mathbf{x}A_{*}(z),\,\mathbf{y}=A_{*}(z)\mathbf{y},\label{s1}\\
\mathbf{x}\mathbf{y}<\infty,\label{s2}
\end{eqnarray}
where $A_{*}(w)=w^{-1}A_{-1}+A_{0}+wA_{1}$, $w\neq
0$. Then $R$ has left and right eigenvectors $\mathbf{x}$ and $\mathbf{r}\equiv (I-A_{0}-RA_{-1}-z^{-1}A_{-1})\mathbf{y}$, respectively, with eigenvalue $z^{-1}$. Moreover, if
\begin{equation}
\boldsymbol{\pi}_{0}\mathbf{y}<\infty,\label{s3}
\end{equation}
then
\begin{eqnarray}
\lim_{m\to\infty}z^{n}\boldsymbol{\pi}_{m}=\frac{\boldsymbol{\pi}_{0}\mathbf{r}}{\mathbf{x}\mathbf{r}}\mathbf{x}.\label{geomd}
\end{eqnarray}
\end{proposition}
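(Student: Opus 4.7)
The plan is to decompose the statement into two stages: first identify $\mathbf{x}$ and $\mathbf{r}$ as a left/right eigenpair of $R$ with eigenvalue $z^{-1}$, and then convert this spectral data into the asymptotic limit formula for $\boldsymbol{\pi}_{m}$.

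First I would verify the right-eigenvector identity $R\mathbf{r} = z^{-1}\mathbf{r}$ by direct substitution. Starting from the Ricatti relation $R = A_{1} + RA_{0} + R^{2}A_{-1}$ and computing
\[
R\mathbf{r} = R(I - A_{0} - RA_{-1})\mathbf{y} - z^{-1}RA_{-1}\mathbf{y} = A_{1}\mathbf{y} - z^{-1}RA_{-1}\mathbf{y},
\]
I would then equate this with $z^{-1}\mathbf{r} = z^{-1}\mathbf{y} - z^{-1}A_{0}\mathbf{y} - z^{-1}RA_{-1}\mathbf{y} - z^{-2}A_{-1}\mathbf{y}$, whereupon cancellation of the common term $z^{-1}RA_{-1}\mathbf{y}$ reduces the identity to the hypothesized relation $zA_{1}\mathbf{y} + A_{0}\mathbf{y} + z^{-1}A_{-1}\mathbf{y} = \mathbf{y}$, i.e.\ $A_{*}(z)\mathbf{y} = \mathbf{y}$. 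The necessary direction of the left-eigenvector identity is symmetric: if $\mathbf{x}R = z^{-1}\mathbf{x}$ then $\mathbf{x}R^{2} = z^{-2}\mathbf{x}$, and multiplying the Ricatti equation by $\mathbf{x}$ on the left and dividing through by $z^{-1}$ yields $\mathbf{x} = \mathbf{x}A_{*}(z)$.

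The delicate point is the \emph{converse} for the left eigenvector, and this is where the minimality of $R$ together with irreducibility/aperiodicity of $A = A_{-1}+A_{0}+A_{1}$ and the 1-arithmetic assumption on the Markov additive process come in. My plan is to exploit the monotone iteration $R_{n+1} = A_{1} + R_{n}A_{0} + R_{n}^{2}A_{-1}$, $R_{0} = 0$: using $\mathbf{x}A_{*}(z) = \mathbf{x}$ and a bootstrap argument one shows by induction that $\mathbf{x}R_{n} \leq z^{-1}\mathbf{x}$ componentwise, so that in the monotone limit $\mathbf{x}R \leq z^{-1}\mathbf{x}$. The reverse inequality requires that $z^{-1}$ coincide with the convergence parameter of $R$; 1-arithmeticity rules out periodic spurious eigenvalues on the spectral circle of the same radius, while irreducibility of $A$ prevents $\mathbf{x}$ from being a strict superinvariant vector. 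Once both inequalities are in place, $\mathbf{x}R = z^{-1}\mathbf{x}$ follows.

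With the eigenpair established, the limit formula follows from a generating-function/renewal argument. Using $\boldsymbol{\pi}_{m} = \boldsymbol{\pi}_{0}R^{m}$ together with the right-eigenvector identity, the scalar product $z^{m}\boldsymbol{\pi}_{m}\mathbf{r} = \boldsymbol{\pi}_{0}\mathbf{r}$ is constant in $m$; finiteness of this scalar is secured by the conditions $\mathbf{xy} < \infty$ and $\boldsymbol{\pi}_{0}\mathbf{y} < \infty$, which also guarantee that $\mathbf{xr}$ is a nonzero finite number so that the normalization $\boldsymbol{\pi}_{0}\mathbf{r}/\mathbf{xr}$ is well defined. To upgrade this to the full vector limit $z^{m}\boldsymbol{\pi}_{m} \to (\boldsymbol{\pi}_{0}\mathbf{r}/\mathbf{xr})\mathbf{x}$ I would examine the generating function $G(w) = \sum_{m \geq 0} w^{m}\boldsymbol{\pi}_{m}\mathbf{y}$ (or a test-vector analogue): its nearest singularity to the origin is a simple pole at $w = z$ with residue proportional to $\mathbf{x}\mathbf{y}$, and the 1-arithmeticity hypothesis ensures no further singularity of the same modulus exists. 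A standard Tauberian/dominated-convergence extraction of the residue then yields the stated asymptotic.

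The main obstacle, as flagged above, is the sufficiency direction of the left-eigenvector identification in the infinite-phase setting: because the background state $N_{2,n} - N_{1,n}$ ranges over all of $\mathbb{Z}$, the matrices $A_{i}$ are of infinite dimension and one cannot simply invoke finite-dimensional Perron--Frobenius to conclude that $z^{-1}$ is the Perron eigenvalue of $R$. The argument must route through the probabilistic (expected taboo occupation) interpretation of $R$, the monotonicity of the Ricatti iteration, and the convexity of the spectral radius of $A_{*}(z)$ in $\log z$, which is exactly the machinery developed in \cite{limiya} and which we invoke as a black box in applying this proposition to our model.
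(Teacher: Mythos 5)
This proposition carries the attribution ``(Theorem 2.1 in \cite{limiya})'' and the paper supplies no proof of it at all --- only a short Remark noting that the hypotheses (irreducibility and aperiodicity of $A$, 1-arithmeticity of the Markov additive process) are satisfied by the model under study. So there is no internal paper proof to compare your sketch against; the paper's ``proof'' is a citation, and it is that theorem which is then applied.

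As for the content of your sketch: the algebra verifying $R\mathbf{r} = z^{-1}\mathbf{r}$ from the Riccati relation $R = A_{1} + RA_{0} + R^{2}A_{-1}$ together with $A_{*}(z)\mathbf{y} = \mathbf{y}$ is correct (the cancellation of $z^{-1}RA_{-1}\mathbf{y}$ and multiplication by $z$ do reduce the identity to $A_*(z)\mathbf{y} = \mathbf{y}$), as is the symmetric deduction that $\mathbf{x}R = z^{-1}\mathbf{x}$ forces $\mathbf{x}A_{*}(z) = \mathbf{x}$, and the observation that $z^{m}\boldsymbol{\pi}_{m}\mathbf{r} = \boldsymbol{\pi}_{0}\mathbf{r}$ is constant in $m$. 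You also correctly locate the two genuinely hard points in the infinite-phase setting: upgrading a superinvariant inequality $\mathbf{x}R \leq z^{-1}\mathbf{x}$ (obtained from the monotone $R_n$-iteration) to an equality, and passing from the scalar constancy to the full vector limit by a Markov-additive renewal/Tauberian argument. But you explicitly defer both to \cite{limiya} as a black box, which means your proposal in the end does exactly what the paper does --- invoke the cited result --- rather than supply an independent proof. This is perfectly acceptable in the context of the paper, but it should be stated as a citation rather than presented as a proof.
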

\begin{remark}
It is easy to see that for our queueing model both the aperiodicity and the irreducibility of $A$ are easily verified. Moreover,the 1-arithmetic property of the Markov
additive process generated by $\{A_{i};i=0,\pm 1\}$ is readily satisfied
by its transition structure.
\end{remark}
For our case the matrix $A_{*}(z)$ is given by
\begin{displaymath}
A_{*}(z)=\bordermatrix{ \text{}&\ldots&-3
&-2&-1&0&1&2&3&\ldots\cr \vdots&\ddots & \ddots &
\ddots & &&&&&\cr
-2&& a_{-1} & a_{0} & a_{1} &&&&&\cr -1 & &&a_{-1}
&a_{0}&a_{1}&&&&\cr 0 &&&&c_{-1}
& c_0&c_{1}& &&\cr 1&&&&&b_{-1}&b_{0}&b_{1}&&\cr 2&&&&&&b_{-1}&b_{0}&b_{1}&\cr \vdots&&&&&&&\ddots&\ddots&\ddots},
\end{displaymath}
where
\begin{displaymath}
\begin{array}{c}
a_{-1}=\frac{\widehat{\mu}_{2}}{\lambda+\alpha_{1}+\alpha_{2}}z^{-1}+\lambda_{1},\,a_{0}=\alpha_{1}+\alpha_{2}+\frac{\lambda\mu}{\lambda+\alpha_{1}+\alpha_{2}},\,a_{1}=\frac{\widehat{\mu}_{1}}{\lambda+\alpha_{1}+\alpha_{2}}+(\lambda_{0}+\lambda_{2})z,\\
c_{-1}=\frac{\widehat{\mu}_{2}}{\lambda+\alpha_{1}+\alpha_{2}}z^{-1}+\lambda_{1}+\frac{\lambda_{0}}{2},\,c_{0}=a_{0},\,c_{1}=\frac{\widehat{\mu}_{1}}{\lambda+\alpha_{1}+\alpha_{2}}z^{-1}+\frac{\lambda_{0}}{2}+\lambda_{2},\\
b_{-1}=\frac{\widehat{\mu}_{2}}{\lambda+\alpha_{1}+\alpha_{2}}+(\lambda_{0}+\lambda_{1})z,\,b_{0}=a_{0},\,b_{1}=\frac{\widehat{\mu}_{1}}{\lambda+\alpha_{1}+\alpha_{2}}z^{-1}+\lambda_{2}.
\end{array}
\end{displaymath}
Denote the left and the right invariant vectors of $A_{*}(z)$, $z>1$ by
\begin{displaymath}
\begin{array}{c}
\mathbf{x}(z)=(\ldots,x_{-1}(z),x_{0},x_{1}(z),\ldots),\,\mathbf{y}(z)=(\ldots,y_{-1}(z),y_{0},y_{1}(z),\ldots)^{T}.
\end{array}
\end{displaymath}
We are left to examine conditions \eqref{s1}, \eqref{s2}, \eqref{s3}. To this end, let
\begin{displaymath}
\begin{array}{rl}
\beta_{i,j}(z)=&(\widehat{\lambda}+\widehat{\mu}_{1}+\widehat{\mu}_{2})^{2}-4(\widehat{\mu}_{i}+(\widehat{\lambda}_{0}+\widehat{\lambda}_{j})z)(\widehat{\mu}_{j}z^{-1}+\widehat{\lambda}_{i}),\,i,j=1,2,\\
f(z)=&\frac{\widehat{\mu}_{2}z^{-1}+\widehat{\lambda}_{1}+\frac{\widehat{\lambda}_{0}}{2}}{2(\widehat{\mu}_{2}z^{-1}+\widehat{\lambda}_{1})}(1-\frac{\sqrt{\beta_{1,2}(z)}}{\widehat{\lambda}+\widehat{\mu}_{1}+\widehat{\mu}_{2}})+\frac{\widehat{\mu}_{1}z^{-1}+\widehat{\lambda}_{2}+\frac{\widehat{\lambda}_{0}}{2}}{2(\widehat{\mu}_{1}z^{-1}+\widehat{\lambda}_{2})}(1-\frac{\sqrt{\beta_{2,1}(z)}}{\widehat{\lambda}+\widehat{\mu}_{1}+\widehat{\mu}_{2}}).
\end{array}
\end{displaymath}
\begin{lemma}\label{kl1}
For the censored chain $\{Y(n);n\geq 0\}$ on the busy states of our GJSOQ, the conditions \eqref{s1}, \eqref{s2} hold if and only if there exists a $z>1$ such that 
\begin{eqnarray}
\beta_{1,2}(z)>0,\,\beta_{2,1}(z)>0,\label{ss1}
\end{eqnarray} 
and $f(z)=1$. In such a case, the left and right invariant vectors $\mathbf{x}(z)$ and $\mathbf{y}(z)$ of
$A_{*}(z)$ satisfying $\mathbf{x}(z)\mathbf{y}(z)<\infty$ have the following forms:
\begin{equation}
\begin{array}{rl}
x_{l}(z)=\left\{\begin{array}{ll}
(1+\frac{\widehat{\lambda}_{0}}{2(\widehat{\mu}_{2}z^{-1}+\widehat{\lambda}_{1})})x_{0}\eta_{min}^{|l|},&l\leq -1,\\
(1+\frac{\widehat{\lambda}_{0}}{2(\widehat{\mu}_{1}z^{-1}+\widehat{\lambda}_{2})})x_{0}\theta_{min}^{l},&l\geq 1,
\end{array}\right.,& y_{l}(z)=\left\{\begin{array}{ll}
y_{0}\eta_{max}^{-|l|},&l\leq -1,\\
y_{0}\theta_{max}^{-l},&l\geq 1,
\end{array}\right.
\end{array}\label{inva}
\end{equation}
where $\eta_{min}<\eta_{max}$ are the roots of the equation:
\begin{eqnarray}
\phi_{-}(\eta):=\eta^{2}(\widehat{\mu}_{1}+(\widehat{\lambda}_{0}+\widehat{\lambda}_{2})z)-\eta(\widehat{\lambda}+\widehat{\mu}_{1}+\widehat{\mu}_{2})+\widehat{\mu}_{2}z^{-1}+\widehat{\lambda}_{1}=0,\label{pol1}
\end{eqnarray}
and $\theta_{min}<\theta_{max}$ are the roots of the equation:
\begin{eqnarray}
\phi_{+}(\theta):=\theta^{2}(\widehat{\mu}_{2}+(\widehat{\lambda}_{0}+\widehat{\lambda}_{1})z)-\theta(\widehat{\lambda}+\widehat{\mu}_{1}+\widehat{\mu}_{2})+\widehat{\mu}_{1}z^{-1}+\widehat{\lambda}_{2}=0.\label{pol2}
\end{eqnarray}
\end{lemma}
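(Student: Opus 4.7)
The plan is to analyze the left‐ and right‐invariant equations of $A_*(z)$ column‐ and row‐wise. Because $A_*(z)$ is tridiagonal and spatially homogeneous on $l\le -2$ and on $l\ge 2$ (with a three‐row ``defect'' at $l=-1,0,1$ coming from the special coefficients $c_{-1},c_0,c_1$), the systems $\mathbf{x}(z)A_*(z)=\mathbf{x}(z)$ and $A_*(z)\mathbf{y}(z)=\mathbf{y}(z)$ split into two constant‐coefficient second‐order recurrences glued together by three boundary equations. First I would write the column equation at $l'\le-2$ explicitly; plugging $x_{l'}=\eta^{-l'}$ and clearing $(\lambda+\alpha_{1}+\alpha_{2})$ produces the characteristic equation $\phi_{-}(\eta)=0$ in \eqref{pol1}. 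The same computation on the right side ($l'\ge 2$) gives $\phi_{+}(\theta)=0$ in \eqref{pol2}. For a positive, summable $\mathbf{x}(z)$ the admissible exponential mode on each side is forced to be the \emph{smaller} root, so we need $\eta_{\min},\theta_{\min}\in(0,1)$ with $\eta_{\min}\ne\eta_{\max}$ and $\theta_{\min}\ne\theta_{\max}$; realness and distinctness of the roots are exactly $\beta_{1,2}(z)>0$ and $\beta_{2,1}(z)>0$, proving the necessity of condition \eqref{ss1}.

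Second, I would extract the prefactors $C_{-},C_{+}$ from the ``one‐sided'' boundary equations. Substituting $x_{l}=C_{-}\eta_{\min}^{|l|}$ ($l\le -1$) into the column equation at $l'=-1$,
\begin{equation*}
x_{-2}a_{1}+x_{-1}a_{0}+x_{0}c_{-1}=x_{-1},
\end{equation*}
and using $a_{1}\eta_{\min}^{2}+(a_{0}-1)\eta_{\min}+a_{-1}=0$ collapses the equation to $C_{-}=x_{0}c_{-1}/a_{-1}=x_{0}\bigl(1+\widehat{\lambda}_{0}/[2(\widehat{\mu}_{2}z^{-1}+\widehat{\lambda}_{1})]\bigr)$, which is exactly the factor in \eqref{inva}. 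The symmetric computation at $l'=+1$ yields the corresponding factor $C_{+}=x_{0}c_{1}/b_{1}$ on the right. This accounts for two of the three defect equations.

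Third, the remaining equation at $l'=0$, namely $x_{-1}a_{1}+x_{0}c_{0}+x_{1}b_{-1}=x_{0}$, becomes a single scalar constraint after substituting the forms above. Using Vieta ($a_{1}\eta_{\min}=a_{-1}/\eta_{\max}$ and $b_{-1}\theta_{\min}=b_{1}/\theta_{\max}$, from $\phi_{\pm}$) one rewrites it as
\begin{equation*}
\frac{c_{-1}}{\eta_{\max}}+\frac{c_{1}}{\theta_{\max}}=1-c_{0},
\end{equation*}
and inserting $\eta_{\max}^{-1},\theta_{\max}^{-1}$ from the quadratic formula (carefully tracking that $\eta_{\max}^{-1}$ produces $-\sqrt{\beta_{1,2}}$ in the numerator after multiplication by the product of roots) yields precisely $f(z)=1$. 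The analysis for $\mathbf{y}(z)$ is parallel but now applied to \emph{row} equations: the same polynomials $\phi_{\pm}$ arise, and summability/positivity forces the \emph{larger} roots $\eta_{\max},\theta_{\max}$, so that $y_{l}=y_{0}\eta_{\max}^{-|l|}$ on the left and $y_{0}\theta_{\max}^{-l}$ on the right. No prefactor is picked up (since the rows $l=\pm 1$ have the ``bulk'' coefficients $a_{1},b_{-1}$), and the row equation at $l=0$ reproduces the same scalar constraint $f(z)=1$.

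Finally, $\mathbf{x}(z)\mathbf{y}(z)<\infty$ is immediate: term‐by‐term $x_{l}y_{l}$ decays as $(\eta_{\min}/\eta_{\max})^{|l|}$ on $l\le -1$ and as $(\theta_{\min}/\theta_{\max})^{l}$ on $l\ge 1$, both geometric with ratio $<1$ by $\beta_{1,2}(z),\beta_{2,1}(z)>0$. The necessity of \eqref{ss1} and $f(z)=1$ follows by reversing the argument: any positive solution of $\mathbf{x}A_{*}(z)=\mathbf{x}$ (or its transpose) must, by the standard theory of second‐order linear recurrences with constant coefficients, be a linear combination of the two modes, and positivity plus $\ell^{1}$‐summability (needed for $\mathbf{x}\mathbf{y}<\infty$) pin down the geometric form above, whence $\beta_{i,j}(z)>0$ and the boundary row at $l=0$ yields $f(z)=1$. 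I expect the main technical obstacle to be Step 3, i.e., the algebraic reduction of the boundary equation at $l=0$ to the stated closed form of $f(z)$: in particular keeping the $\pm\sqrt{\beta_{i,j}(z)}$ sign conventions consistent between the left‐ and right‐invariant computations, and verifying that the same condition $f(z)=1$ is produced by both. Everything else is a direct, if notation‐heavy, substitution.
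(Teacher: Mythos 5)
Your proposal is correct and follows essentially the same route as the paper's Appendix~C proof: write the left/right invariant equations for $A_{*}(z)$, observe that the tridiagonal, piecewise-homogeneous structure reduces them to the constant-coefficient recurrences with characteristic polynomials $\phi_{\mp}$, select $\eta_{\min},\theta_{\min}$ for $\mathbf{x}$ and $\eta_{\max},\theta_{\max}$ for $\mathbf{y}$ from $\mathbf{x}\mathbf{y}<\infty$ (forcing distinct real roots, hence $\beta_{1,2},\beta_{2,1}>0$), extract the prefactors $c_{-1}/a_{-1}$ and $c_{1}/b_{1}$ from the column equations at $l'=\pm 1$ via the characteristic relation, and obtain $f(z)=1$ from the $l'=0$ equation after a Vieta reduction; the same constraint appears again from the row equation at $l=0$ for $\mathbf{y}$. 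The only imprecision is your early phrase ``positive, summable $\mathbf{x}(z)$'' — summability of $\mathbf{x}$ itself is not assumed (and is in fact a delicate point the paper defers when discussing marginal decay rates); what pins down the smaller roots is exactly $\mathbf{x}\mathbf{y}<\infty$, as you correctly use in the final paragraph, so this is a matter of wording rather than a gap.
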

\begin{proof}
See Appendix \ref{app2}.\end{proof}
   
\begin{remark}
Note that from \eqref{pol1}, \eqref{pol2},
\begin{displaymath}
\begin{array}{llcrl}
\eta_{min}=&\frac{\widehat{\lambda}+\widehat{\mu}_{1}+\widehat{\mu}_{2}-\sqrt{\beta_{1,2}(z)}}{2(\widehat{\mu}_{1}+(\widehat{\lambda}_{2}+\widehat{\lambda}_{0})z)},&\eta_{max}=&\frac{\widehat{\lambda}+\widehat{\mu}_{1}+\widehat{\mu}_{2}+\sqrt{\beta_{1,2}(z)}}{2(\widehat{\mu}_{1}+(\widehat{\lambda}_{2}+\widehat{\lambda}_{0})z)},\\
\theta_{min}=&\frac{\widehat{\lambda}+\widehat{\mu}_{1}+\widehat{\mu}_{2}-\sqrt{\beta_{2,1}(z)}}{2(\widehat{\mu}_{2}+(\widehat{\lambda}_{1}+\widehat{\lambda}_{0})z)},&\theta_{max}=&\frac{\widehat{\lambda}+\widehat{\mu}_{1}+\widehat{\mu}_{2}+\sqrt{\beta_{2,1}(z)}}{2(\widehat{\mu}_{2}+(\widehat{\lambda}_{1}+\widehat{\lambda}_{0})z)}.
\end{array}
\end{displaymath}
\end{remark}

We now turn our attention in the solution of the equation $f(z)=1$. To improve the readability, set $\gamma_{1}=\widehat{\mu}_{1}\rho^{2}+\widehat{\lambda}_{2}$, $\gamma_{2}=\widehat{\mu}_{2}\rho^{2}+\widehat{\lambda}_{1}$. Note that $\gamma_{1}+\gamma_{2}+\widehat{\lambda}_{0}=\rho(\widehat{\lambda}+\widehat{\mu}_{1}+\widehat{\mu}_{2})$.
\begin{lemma}\label{kl2}
$z=\rho^{-2}$ is the only solution of $f(z)=1$, $z>1$, and
\begin{eqnarray}
(\eta_{min},\eta_{max})=(\rho\frac{\gamma_{2}}{\gamma_{1}+\widehat{\lambda}_{0}},\rho)\text{ or }(\rho,\rho\frac{\gamma_{2}}{\gamma_{1}+\widehat{\lambda}_{0}}),\label{rot1}\\
(\theta_{min},\theta_{max})=(\rho\frac{\gamma_{1}}{\gamma_{2}+\widehat{\lambda}_{0}},\rho)\text{ or }(\rho,\rho\frac{\gamma_{1}}{\gamma_{2}+\widehat{\lambda}_{0}}).\label{rot2}
\end{eqnarray}
\end{lemma}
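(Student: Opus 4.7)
The plan is to establish the lemma in two stages: first verify that $z=\rho^{-2}$ is a solution of $f(z)=1$ and that the stated pairs are indeed the roots of \eqref{pol1}--\eqref{pol2}, and then address uniqueness. The computational verification relies on a single algebraic identity already noted just before the lemma, namely $\gamma_{1}+\gamma_{2}+\widehat{\lambda}_{0}=\rho(\widehat{\lambda}+\widehat{\mu}_{1}+\widehat{\mu}_{2})$, which follows immediately from the definition $\rho=\widehat{\lambda}/(\widehat{\mu}_{1}+\widehat{\mu}_{2})$ combined with $\widehat{\lambda}-\widehat{\lambda}_{0}=\widehat{\lambda}_{1}+\widehat{\lambda}_{2}$.

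To obtain the roots in \eqref{rot1}--\eqref{rot2}, I would substitute $z=\rho^{-2}$ into $\phi_{-}$, rewrite the leading coefficient as $\rho^{-2}(\gamma_{1}+\widehat{\lambda}_{0})$ and the constant term as $\gamma_{2}$, and then plug in $\eta=\rho$: the value of $\phi_{-}(\rho)$ collapses to $(\gamma_{1}+\widehat{\lambda}_{0})+\gamma_{2}-\rho(\widehat{\lambda}+\widehat{\mu}_{1}+\widehat{\mu}_{2})$, which vanishes by the identity above. Vieta's product formula then forces the companion root to be $\rho\gamma_{2}/(\gamma_{1}+\widehat{\lambda}_{0})$; an identical argument applied to $\phi_{+}$ produces the pair $\{\rho,\rho\gamma_{1}/(\gamma_{2}+\widehat{\lambda}_{0})\}$. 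The hypothesis $\widehat{\lambda}_{0}>|\gamma_{1}-\gamma_{2}|$ carried over from Theorem \ref{decay} makes both $\gamma_{1}+\widehat{\lambda}_{0}-\gamma_{2}$ and $\gamma_{2}+\widehat{\lambda}_{0}-\gamma_{1}$ strictly positive, which is what decides which of the two candidate roots plays the role of the minimum in each pair.

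Having the roots, the step $f(\rho^{-2})=1$ becomes essentially mechanical. Since $\sqrt{\beta_{1,2}(z)}$ is by construction the discriminant of $\phi_{-}$, it equals the leading coefficient of $\phi_{-}$ times the absolute difference of its roots; evaluating at $z=\rho^{-2}$ gives $\sqrt{\beta_{1,2}(\rho^{-2})}=\rho^{-1}(\gamma_{1}+\widehat{\lambda}_{0}-\gamma_{2})$, and symmetrically $\sqrt{\beta_{2,1}(\rho^{-2})}=\rho^{-1}(\gamma_{2}+\widehat{\lambda}_{0}-\gamma_{1})$. Using once more the identity $\rho(\widehat{\lambda}+\widehat{\mu}_{1}+\widehat{\mu}_{2})=\gamma_{1}+\gamma_{2}+\widehat{\lambda}_{0}$, the bracket $1-\sqrt{\beta_{1,2}(\rho^{-2})}/(\widehat{\lambda}+\widehat{\mu}_{1}+\widehat{\mu}_{2})$ simplifies to $2\gamma_{2}/(\gamma_{1}+\gamma_{2}+\widehat{\lambda}_{0})$, so the first summand of $f$ reduces to $(\gamma_{2}+\widehat{\lambda}_{0}/2)/(\gamma_{1}+\gamma_{2}+\widehat{\lambda}_{0})$. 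The second summand, treated identically, equals $(\gamma_{1}+\widehat{\lambda}_{0}/2)/(\gamma_{1}+\gamma_{2}+\widehat{\lambda}_{0})$, and the two fractions sum exactly to $1$.

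The main obstacle is the uniqueness claim, that no other $z>1$ solves $f(z)=1$. The cleanest route I see is to invoke the QBD machinery used in Lemma \ref{kl1}: $f(z)=1$ encodes precisely the condition that the maximal eigenvalue of $A_{*}(z)$ equals $1$, and by standard Perron--Frobenius results for irreducible nonnegative matrices, the log of this eigenvalue is a strictly convex function of $\log z$, with unique minimum at some $z^{*}\in(0,1)$ under the stability condition $\rho<1$, so the equation $f(z)=1$ can have at most one root in $(1,\infty)$. Alternatively, one can attack monotonicity of $f$ on $(1,\infty)$ by direct differentiation of each summand, observing that both $\sqrt{\beta_{i,j}(z)}/(\widehat{\mu}_{j}z^{-1}+\widehat{\lambda}_{i})$ are decreasing in $z$ on $(1,\infty)$ because of the product structure of $\beta_{i,j}$; this forces each summand to be strictly increasing, hence $f$ itself, giving at most one crossing of $1$. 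I expect this monotonicity/convexity step to be the most delicate point; the value $z=\rho^{-2}$ provides that crossing, and uniqueness follows.
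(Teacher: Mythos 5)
Your computation of the roots of $\phi_-$ and $\phi_+$ at $z=\rho^{-2}$ is correct and is in fact a cleaner route than the paper takes: you observe directly that $\eta=\rho$ annihilates $\phi_-$ via the identity $\gamma_1+\gamma_2+\widehat{\lambda}_0=\rho(\widehat{\lambda}+\widehat{\mu}_1+\widehat{\mu}_2)$ and then read off the companion root from Vieta, whereas the paper derives the same pairs as a by-product of its algebraic treatment of $f(z)=1$. Your verification that $f(\rho^{-2})=1$ by expressing $\sqrt{\beta_{i,j}}$ through the difference of roots is also correct.

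The genuine gap is in the uniqueness claim, and your own proposed monotonicity argument cannot work. The function $f$ also satisfies $f(1)=1$ whenever the two orbit parameters satisfy $\widehat{\mu}_1+\widehat{\lambda}_0+\widehat{\lambda}_2\ge\widehat{\mu}_2+\widehat{\lambda}_1$ and $\widehat{\mu}_2+\widehat{\lambda}_0+\widehat{\lambda}_1\ge\widehat{\mu}_1+\widehat{\lambda}_2$ (in particular, always in the symmetric case, and more generally in the strongly pooled regime relevant to Theorem \ref{decay}): at $z=1$ one has $\beta_{1,2}(1)=(\widehat{\mu}_1+\widehat{\lambda}_0+\widehat{\lambda}_2-\widehat{\mu}_2-\widehat{\lambda}_1)^2$, both brackets $1-\sqrt{\beta_{i,j}(1)}/(\widehat{\lambda}+\widehat{\mu}_1+\widehat{\mu}_2)$ reduce to $2(\widehat{\mu}_j+\widehat{\lambda}_i)/(\widehat{\lambda}+\widehat{\mu}_1+\widehat{\mu}_2)$, and the two summands of $f(1)$ add to exactly $1$. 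Since under stability $\rho^{-2}>1$, you then have $f(1)=f(\rho^{-2})=1$, which is incompatible with $f$ being strictly increasing on $(1,\infty)$ (by continuity at $z=1^+$, a strictly increasing $f$ would satisfy $f(\rho^{-2})>f(1^+)\ge 1$). So the ``cleanest route'' you sketch must fail as stated. The alternative Perron--Frobenius convexity route is not obviously wrong, but it is not established either: $A_*(z)$ is an infinite operator here, so the finite-matrix result about log-convexity of the Perron root in $\log z$ does not directly apply, and one would also need to show that the scalar equation $f(z)=1$ (which in Lemma \ref{kl1} arises as a matching condition across the boundary index $l=0$) is actually equivalent to ``the spectral radius of $A_*(z)$ equals $1$''. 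The paper avoids all of this by a purely algebraic elimination: after introducing $\xi_1,\xi_2$ and squaring twice, the relation $f(z)=1$ factors as $\bigl((\xi_1+\xi_2)^2 z-4\bigr)\bigl((\xi_1-\xi_2)^2\tilde{\lambda}_0^2 z+4(\xi_1\xi_2-\tilde{\lambda}_0^2)\bigr)=0$; the second factor is shown to be strictly positive, and the first factor has exactly the two roots $z=1$ and $z=\rho^{-2}$, which pins down uniqueness on $(1,\infty)$. Without some substitute for that elimination step, your proof of uniqueness is incomplete.
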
\begin{proof}
See Appendix \ref{appro}.
\end{proof}

The results given in Lemmas \ref{kl1}, \ref{kl2} provide the basic ingredients for the proof of Theorem \ref{decay}. Note that based on Lemma \ref{kl2} there are four cases for the expression of $\mathbf{x}(\rho^{-2})$ based on the possible values of $\eta_{min}$, $\theta_{min}$. In particular,
\begin{displaymath}
\begin{array}{rl}
\begin{pmatrix}
\eta_{min}\\
\theta_{min}
\end{pmatrix}=&\begin{pmatrix}
\rho\frac{\gamma_{2}}{\gamma_{1}+\widehat{\lambda}_{0}}\\
\rho\frac{\gamma_{1}}{\gamma_{2}+\widehat{\lambda}_{0}}
\end{pmatrix}\text{ or }\begin{pmatrix}
\rho\\
\rho\frac{\gamma_{1}}{\gamma_{2}+\widehat{\lambda}_{0}}
\end{pmatrix}\text{ or }\begin{pmatrix}
\rho\frac{\gamma_{2}}{\gamma_{1}+\widehat{\lambda}_{0}}\\
\rho\end{pmatrix}\text{ or }\begin{pmatrix}
\rho\\
\rho\end{pmatrix}.
\end{array}
\end{displaymath}
The last one is rejected since corresponds to the case $\gamma_{1}+\widehat{\lambda}_{0}\leq \gamma_{2}$, $\gamma_{2}+\widehat{\lambda}_{0}\leq \gamma_{1}$, which is impossible since $\lambda_{0}>0$. It is easily seen that only the first pair satisfies $\mathbf{x}(\rho^{-2})=\mathbf{x}(\rho^{-2})A_{*}(\rho^{-2})$, which means that $A_{*}(\rho^{-2})$ is 1-positive. Note that in such a case $\rho\frac{\gamma_{2}}{\gamma_{1}+\widehat{\lambda}_{0}}<\rho\Rightarrow\gamma_{2}-\gamma_{1}<\widehat{\lambda}_{0}$, and $\rho\frac{\gamma_{1}}{\gamma_{2}+\widehat{\lambda}_{0}}<\rho\Rightarrow\gamma_{1}-\gamma_{2}<\widehat{\lambda}_{0}$, which is equivalent to $|\gamma_{1}-\gamma_{2}|<\widehat{\lambda}_{0}$ or to $|\widehat{\lambda}_{2}-\widehat{\lambda}_{1}+\rho^{2}(\mu_{1}-\mu_{2})|<\widehat{\lambda}_{0}$ (note that this case is equivalent to the so called \textit{strongly pooled case} in \cite{foley}).

Now it remains to verify the boundary condition \eqref{s3}. From Lemma \ref{kl2} we know that $\eta_{max}=\theta_{max}=\rho$. Thus, $\boldsymbol{\pi}_{0}\mathbf{y}<\infty$ implies
\begin{displaymath}
\sum_{l=0}^{\infty}\rho^{-l}(\mathbb{P}(N_{1}=l,N_{2}=0,C=1)+\mathbb{P}(N_{1}=0,N_{2}=l,C=1))<\infty,
\end{displaymath}
where $\{(N_{1},N_{2},C)\}$ is the stationary version of $\{X(n);n\geq 0\}$. The last inequality is the same as condition C.10 in \cite{foley}, which was proved under the corresponding conditions $max\{\rho_{1},\rho_{2}\}<\rho<1$, and $|\widehat{\lambda}_{2}-\widehat{\lambda}_{1}+\rho^{2}(\mu_{1}-\mu_{2})|<\widehat{\lambda}_{0}$ in \cite[Proposition 1]{foley}, following \cite[Theorem 14.3.7]{seanmeyn}, so further details are omitted. 

Following \cite[Definition 2.2]{limiya}, the two orbit queue are \textit{strongly balanced}, if the difference between orbit queue lengths (i.e., the background process) is \textit{dominated} by the level process, i.e., $min\{N_{1,n},N_{2,n}\}$. Intuitively, it means that the level process has more influence when the same scaling is applied to the level and background process. This is equivalent with the condition
\begin{displaymath}
\lim_{l\to+\infty}\rho^{-2l}x_{l}(\rho^{-2})=\lim_{l\to-\infty}\rho^{2l}x_{l}(\rho^{-2})=0.
\end{displaymath}
Noting \eqref{bnm}, it is easily realized that this is the case if and only if $\gamma_{2}<\rho(\gamma_{1}+\widehat{\lambda}_{0})$, $\gamma_{1}<\rho(\gamma_{2}+\widehat{\lambda}_{0})$.

\subsection{Proof of Lemma \ref{sym}}\label{prsym}
Starting with region I ($i\gg 1$, $j\gg 1$), equations \eqref{e1}, \eqref{e2} are the only relevant. By introducing the new variables $m=i+j$, $n=j-i$, and setting $p_{i,j}(k)\equiv R_{m,n}(k)$, \eqref{e1}, \eqref{e2} become 
\begin{eqnarray}
R_{m,n}(0)=\frac{\mu}{\lambda+2\alpha}R_{m,n}(1),\label{se1}\\
(\lambda+2\alpha)R_{m,n}(1)=[\lambda_{0}H(1-n)+\lambda_{+}]R_{m-1,n-1}(1)\nonumber\\+[\lambda_{0}H(n+1)+\lambda_{+}]R_{m-1,n+1}(1)
+\lambda R_{m,n}(0)+\alpha[R_{m+1,n-1}(0)+R_{m+1,n+1}(0)],\label{se2}
\end{eqnarray}
Substitute \eqref{se1} to \eqref{se2} yields
\begin{eqnarray}
(\widehat{\lambda}+2\widehat{\mu})R_{m,n}(1)=[\widehat{\lambda}_{0}H(1-n)+\widehat{\lambda}_{+}]R_{m-1,n-1}(1) \nonumber \\+[\widehat{\lambda}_{0}H(n+1)+\widehat{\lambda}_{+}]R_{m-1,n+1}(1)
+\widehat{\mu}[R_{m+1,n-1}(1)+R_{m,n}(1)],\label{sse2}
\end{eqnarray}
where $\widehat{\lambda}=\lambda(\lambda+2\alpha)$, $\lambda=\lambda_{0}+2\lambda_{+}$, $\widehat{\lambda}_{l}=\lambda_{l}(\lambda+2\alpha)$, $l=0,+$, $\widehat{\mu}=\alpha\mu$. Note that regarding variable $m$, \eqref{sse2} is second-order difference equation with constant coefficients, which admits solutions of the form
$R(m,n,1)=\gamma^{m}Q(n)$, where $\gamma$ a constant to be determined. Substituting in \eqref{sse2} we realize that $Q(n)$ satisfies
\begin{equation}
\begin{array}{rl}
(\widehat{\lambda}+2\widehat{\mu})Q(n)=&[\frac{\widehat{\lambda}_{0}H(1-n)+\widehat{\lambda}_{+}}{\gamma}+\widehat{\mu}\gamma]Q(n-1)+[\frac{\widehat{\lambda}_{0}H(1+n)+\widehat{\lambda}_{+}}{\gamma}+\widehat{\mu}\gamma]Q(n+1).
\end{array}\label{sse21}
\end{equation}
Note that symmetry implies that $Q(n)=Q(-n)$, and thus, it is sufficient to only consider $n\geq 0$. Then, for $n=0$, \eqref{sse2} gives
\begin{equation}
(\widehat{\lambda}+2\widehat{\mu})Q(0)=2[\frac{\widehat{\lambda}_{0}+\widehat{\lambda}_{+}}{\gamma}+\widehat{\mu}\gamma]Q(1).\label{q0}
\end{equation}
Setting $n=1$ in \eqref{sse21}, and using \eqref{q0} we obtain,
\begin{equation}
\begin{array}{rl}
[\frac{\widehat{\lambda}_{0}+\widehat{\lambda}_{+}}{\gamma}+\widehat{\mu}\gamma]Q(2)=&\left[\widehat{\lambda}+2\widehat{\mu}-\frac{2}{\widehat{\lambda}+2\widehat{\mu}}[\frac{\widehat{\lambda}_{0}+\widehat{\lambda}_{+}}{\gamma}+\widehat{\mu}\gamma][\frac{\widehat{\lambda}}{2\gamma}+\widehat{\mu}\gamma]\right]Q(1),
\end{array}\label{q2}
\end{equation}
while, for $n\geq 2$,
\begin{equation}
\begin{array}{rl}
(\widehat{\lambda}+2\widehat{\mu})Q(n)=&[\frac{\widehat{\lambda}_{+}}{\gamma}+\widehat{\mu}\gamma]Q(n-1)+[\frac{\widehat{\lambda}_{0}+\widehat{\lambda}_{+}}{\gamma}+\widehat{\mu}\gamma]Q(n+1).\label{q3}
\end{array}
\end{equation}
The form of \eqref{q3} implies that $Q(n)=c\delta^{n}$, $n\geq 1$. Thus, by substituting in \eqref{q2}, \eqref{q3} we obtain
\begin{eqnarray}
\widehat{\lambda}+2\widehat{\mu}=[\frac{\widehat{\lambda}_{0}+\widehat{\lambda}_{+}}{\gamma}+\widehat{\mu}\gamma]\delta+\frac{2}{\widehat{\lambda}+2\widehat{\mu}}[\frac{\widehat{\lambda}_{0}+\widehat{\lambda}_{+}}{\gamma}+\widehat{\mu}\gamma][\frac{\widehat{\lambda}}{2\gamma}+\widehat{\mu}\gamma],\label{t1}\\
\widehat{\lambda}+2\widehat{\mu}=[\frac{\widehat{\lambda}_{+}}{\gamma}+\widehat{\mu}\gamma]\frac{1}{\delta}+[\frac{\widehat{\lambda}_{0}+\widehat{\lambda}_{+}}{\gamma}+\widehat{\mu}\gamma]\delta.\label{t2}
\end{eqnarray}
Equations \eqref{t1}, \eqref{t2} constitute a pair of algebraic curves in $(\gamma,\delta)-plane$ with four intersection points. From these points the only possible candidates are: $(1,\frac{\widehat{\lambda}_{+}+\widehat{\mu}}{\widehat{\lambda}_{0}+\widehat{\lambda}_{+}+\widehat{\mu}})$, $(\frac{\widehat{\lambda}}{2\widehat{\mu}},\frac{\widehat{\lambda}^{2}+4\widehat{\lambda}_{+}\widehat{\mu}}{\widehat{\lambda}^{2}+4(\widehat{\lambda}_{0}+\widehat{\lambda}_{+})\widehat{\mu}})$. Note that $\gamma=1$ yields $\sum_{i,j\geq 0}p_{i,j}(1)=\infty$, i.e., corresponds to an un-normalizable solution for $p_{i,j}(1)$, thus the pair $(\frac{\widehat{\lambda}}{2\widehat{\mu}},\frac{\widehat{\lambda}^{2}+4\widehat{\lambda}_{+}\widehat{\mu}}{\widehat{\lambda}^{2}+4(\widehat{\lambda}_{0}+\widehat{\lambda}_{+})\widehat{\mu}})$, with $\widehat{\lambda}<2\widehat{\mu}$ corresponds to the feasible pair. Note that $\widehat{\lambda}<2\widehat{\mu}$ corresponds to the \textit{stability condition} for our system. Finally, from \eqref{q0} by substituting $(\gamma,\delta)=(\frac{\widehat{\lambda}}{2\widehat{\mu}},\frac{\widehat{\lambda}^{2}+4\widehat{\lambda}_{+}\widehat{\mu}}{\widehat{\lambda}^{2}+4(\widehat{\lambda}_{0}+\widehat{\lambda}_{+})\widehat{\mu}})$ yields
\begin{displaymath}
Q(0)=c\frac{\widehat{\lambda}^{2}+4\widehat{\lambda}_{+}\widehat{\mu}}{\widehat{\lambda}(\widehat{\lambda}+2\widehat{\mu})}.
\end{displaymath}

To summarize, the asymptotic expansions of $p_{i,j}(k)$, in region I ($i\gg 1$, $j\gg 1$) are
\begin{equation}
\begin{array}{rl}
p_{i,j}(1)\sim& c\left\{\begin{array}{ll}
\left(\frac{\widehat{\lambda}}{2\widehat{\mu}}\right)^{i+j}\left(\frac{\widehat{\lambda}^{2}+4\widehat{\lambda}_{+}\widehat{\mu}}{\widehat{\lambda}^{2}+4(\widehat{\lambda}_{0}+\widehat{\lambda}_{+})\widehat{\mu}})\right)^{|i-j|},&i\neq j,\\
\left(\frac{\widehat{\lambda}}{2\widehat{\mu}}\right)^{i+j}\left(\frac{\widehat{\lambda}^{2}+4\widehat{\lambda}_{+}\widehat{\mu}}{\widehat{\lambda}(\widehat{\lambda}+2\widehat{\mu})}\right),&i=j.
\end{array}\right.\vspace{2mm}\\
p_{i,j}(0)=&\frac{\mu}{\lambda+2\alpha}p_{i,j}(1),
\end{array}\label{r1}
\end{equation}
where the multiplicative constant $c$ is the normalization constant.

We proceed with region II for $i=0$ or 1, and $j\gg 1$. Equations \eqref{e1}, \eqref{e2} for $j>i+1$, and \eqref{e3} are the only relevant. Using these equations we have to solve
\begin{eqnarray}
(\lambda+\alpha 1_{\{i>0\}}+\alpha)p_{i,j}(0)=\mu p_{i,j}(1),\label{ee1}\vspace{2mm}\\
(\widehat{\lambda}+2\widehat{\mu})p_{i,j}(1)=\widehat{\lambda}_{+}p_{i,j-1}(1)+[\widehat{\lambda}_{0}+\widehat{\lambda}_{+}]p_{i-1,j}(1)+\widehat{\mu}[p_{i+1,j}(1)+p_{i,j+1}(1)],\label{ee2}\vspace{2mm}\\
(\lambda(\lambda+\alpha)+\widehat{\mu})p_{0,j}(1)=\frac{\widehat{\mu}(\lambda+\alpha)}{\lambda+2\alpha}p_{1,j}(1)+\widehat{\mu}p_{0,j+1}(1)+\lambda_{+}(\lambda+\alpha)p_{0,j-1}(1).\label{ee3}
\end{eqnarray}
Clearly, the solution of \eqref{ee1}-\eqref{ee3} should agree with the expansion \eqref{r1} for $i\gg 1$. Thus, we should add the condition
\begin{displaymath}
\begin{array}{c}
p_{i,j}(1)\sim c\left(\frac{\widehat{\lambda}}{2\widehat{\mu}}\right)^{i+j}\left(\frac{\widehat{\lambda}^{2}+4\widehat{\lambda}_{+}\widehat{\mu}}{\widehat{\lambda}^{2}+4(\widehat{\lambda}_{0}+\widehat{\lambda}_{+})\widehat{\mu}}\right)^{j-i},\,i\gg 1.
\end{array}
\end{displaymath}
This condition implies that we seek for solutions of \eqref{ee1}-\eqref{ee3} of the form
\begin{equation}
\begin{array}{c}
p_{i,j}(1)=c\left(\frac{\widehat{\lambda}}{2\widehat{\mu}}\times \frac{\widehat{\lambda}^{2}+4(\widehat{\lambda}_{0}+\widehat{\lambda}_{+})\widehat{\mu}}{\widehat{\lambda}^{2}+4\widehat{\lambda}_{+}\widehat{\mu}}\right)^{j}F(i),
\end{array}\label{sug}
\end{equation}
where $F(i)\sim \left(\frac{\widehat{\lambda}(\widehat{\lambda}^{2}+4\widehat{\lambda}_{+}\widehat{\mu})}{2\widehat{\mu}(\widehat{\lambda}^{2}+4(\widehat{\lambda}_{0}+\widehat{\lambda}_{+})\widehat{\mu})}\right)^{i}$, $i\gg1$. Substitute \eqref{sug} in \eqref{ee2}, \eqref{ee3} we obtain the following set of equations that $F(i)$ should satisfy:
\begin{equation}
\begin{array}{rl}
(\lambda(\lambda+\alpha)+\widehat{\mu})F(0)=&\widehat{\mu}[\frac{\lambda+\alpha}{\lambda+2\alpha}F(1)+\frac{\widehat{\lambda}(\widehat{\lambda}^{2}+4\widehat{\lambda}_{+}\widehat{\mu})}{2\widehat{\mu}(\widehat{\lambda}^{2}+4(\widehat{\lambda}_{0}+\widehat{\lambda}_{+})\widehat{\mu})}F(0)]\\&+\lambda_{+}(\lambda+\alpha)\frac{2\widehat{\mu}(\widehat{\lambda}^{2}+4\widehat{\mu}(\widehat{\lambda}_{0}+\widehat{\lambda}_{+}))}{\widehat{\lambda}(\widehat{\lambda}^{2}+4\widehat{\mu}\widehat{\lambda}_{+})}F(0),\label{f0}
\end{array}
\end{equation}
\begin{equation}
\begin{array}{rl}
(\widehat{\lambda}+2\widehat{\mu})F(i)=&(\widehat{\lambda}_{0}+\widehat{\lambda}_{+})F(i-1)+\widehat{\lambda}_{+}\frac{2\widehat{\mu}(\widehat{\lambda}^{2}+4\widehat{\mu}(\widehat{\lambda}_{0}+\widehat{\lambda}_{+}))}{\widehat{\lambda}(\widehat{\lambda}^{2}+4\widehat{\mu}\widehat{\lambda}_{+})}F(i)
+\widehat{\mu}F(i+1)\\&+\frac{\widehat{\lambda}(\widehat{\lambda}^{2}+4\widehat{\mu}\widehat{\lambda}_{+})}{2(\widehat{\lambda}^{2}+4\widehat{\mu}(\widehat{\lambda}_{0}+\widehat{\lambda}_{+}))}F(i),\,i\geq 1.\label{f1}
\end{array}
\end{equation}
The form of \eqref{f1} implies that $F(i)=A_{+}x_{+}^{i}+A_{-}x_{-}^{i}$, where
\begin{displaymath}
\begin{array}{rl}
x_{+}=\frac{\widehat{\lambda}(\widehat{\lambda}^{2}+4\widehat{\mu}(\widehat{\lambda}_{0}+\widehat{\lambda}_{+}))}{2\widehat{\mu}(\widehat{\lambda}^{2}+4\widehat{\mu}\widehat{\lambda}_{+}},&
x_{-}=\frac{\widehat{\lambda}_{0}+\widehat{\lambda}_{+}}{\widehat{\mu}x_{+}},\\
A_{-}=&\frac{\lambda(\lambda+\alpha)+\widehat{\mu}(1-x_{-}(1+\frac{\lambda+\alpha}{\lambda+2\alpha}))+\frac{\lambda_{+}(\lambda+\alpha)}{x_{+}}}{\widehat{\mu}(\frac{\lambda+\alpha}{\lambda+2\alpha}x_{-}+x_{+}-1)-\lambda(\lambda+\alpha)-\frac{\lambda_{+}(\lambda+\alpha)}{x_{+}}}.
\end{array}
\end{displaymath}
Moreover, $A_{+}=1$ (due to the asymptotic form of $F(i)$ given below \eqref{sug}), while $A_{-}$ is derived by the boundary condition \eqref{f0}. The obtained expansion for region II coincides with the expression \eqref{r1} for $i\gg 1$. Thus, \eqref{sug} is uniformly valid for all $i<j$ with $j\gg 1$. Due to the symmetry of the model, the expansion for $i>j$, $j\gg 1$ is obtained by replacing in the derived expressions $(i,j)$ with $(j,i)$.
\subsection{Proof of Lemma \ref{asym}}\label{prasym}
The proof is similar to the one presented in Section \ref{prsym}, although we need an additional condition to ensure that the approximated joint orbit queue length distribution is well defined. We start with region I ($i\gg1,j\gg1$), where equations \eqref{e1}, \eqref{e2} become
\begin{eqnarray}
R_{m,n}(0)=\frac{\mu}{\lambda+\alpha_{1}+\alpha_{2}}R_{m,n}(1),\label{ase1}\\
(\lambda+\alpha_{1}+\alpha_{2})R_{m,n}(1)=[\lambda_{0}H(1-n)+\lambda_{2}]R_{m-1,n-1}(1)\nonumber\\+[\lambda_{0}H(n+1)+\lambda_{1}]R_{m-1,n+1}(1)
+\lambda R_{m,n}(0)+\alpha_{1}R_{m+1,n-1}(0)+\alpha_{2}R_{m+1,n+1}(0)],\label{ase2}
\end{eqnarray}
which give
\begin{eqnarray}
(\widehat{\lambda}+\widehat{\mu}_{1}+\widehat{\mu}_{2})R_{m,n}(1)=[\widehat{\lambda}_{0}H(1-n)+\widehat{\lambda}_{2}]R_{m-1,n-1}(1) \nonumber \\+[\widehat{\lambda}_{0}H(n+1)+\widehat{\lambda}_{2}]R_{m-1,n+1}(1)
+\widehat{\mu}_{1}R_{m+1,n-1}(1)+\widehat{\mu}_{2}R_{m,n}(1),\label{asse2}
\end{eqnarray}
where $\widehat{\lambda}=\lambda(\lambda+\alpha_{1}+\alpha_{2})$, $\widehat{\lambda}_{l}=\lambda_{l}(\lambda+\alpha_{1}+\alpha_{2})$, $l=0,1,2$, $\widehat{\mu}_{l}=\alpha_{l}\mu$, $l=1,2$. Contrary to the symmetric case we are now seeking for solutions of \eqref{asse2} of the form
\begin{equation}
\begin{array}{rl}
R_{m,n}(1)=\gamma^{m}\left\{\begin{array}{ll}
c_{+}\delta_{+}^{n},&n\geq 1,\\
c_{0},&n=0,\\
c_{-}\delta_{-}^{-n},&n\leq -1.
\end{array}\right.
\end{array}\label{solv}
\end{equation}
Substitution of \eqref{solv} to \eqref{asse2} yields five equations, corresponding to the cases $n=0$, $n=\pm 1$, $n\geq 2$, $n\leq -2$:
\begin{eqnarray}
(\widehat{\lambda}+\widehat{\mu}_{1}+\widehat{\mu}_{2})c_{0}=(\frac{\widehat{\lambda}_{0}+\widehat{\lambda}_{2}}{\gamma}+\widehat{\mu}_{1}\gamma)c_{-}\delta_{-}+(\frac{\widehat{\lambda}_{0}+\widehat{\lambda}_{1}}{\gamma}+\widehat{\mu}_{2}\gamma)c_{+}\delta_{+},\label{u1}\\
(\widehat{\lambda}+\widehat{\mu}_{1}+\widehat{\mu}_{2})c_{+}\delta_{+}=(\frac{\widehat{\lambda}_{0}+2\widehat{\lambda}_{2}}{2\gamma}+\widehat{\mu}_{1}\gamma)c_{0}+(\frac{\widehat{\lambda}_{0}+\widehat{\lambda}_{1}}{\gamma}+\widehat{\mu}_{2}\gamma)c_{+}\delta_{+}^{2},\label{u2}\\
(\widehat{\lambda}+\widehat{\mu}_{1}+\widehat{\mu}_{2})c_{-}\delta_{-}=(\frac{\widehat{\lambda}_{0}+2\widehat{\lambda}_{1}}{2\gamma}+\widehat{\mu}_{2}\gamma)c_{0}+(\frac{\widehat{\lambda}_{0}+\widehat{\lambda}_{2}}{\gamma}+\widehat{\mu}_{1}\gamma)c_{-}\delta_{-}^{2},\label{u3}\\
\widehat{\lambda}+\widehat{\mu}_{1}+\widehat{\mu}_{2}=(\frac{\widehat{\lambda}_{2}}{\gamma}+\widehat{\mu}_{1}\gamma)\frac{1}{\delta_{+}}+(\frac{\widehat{\lambda}_{0}+\widehat{\lambda}_{1}}{\gamma}+\widehat{\mu}_{2}\gamma)\delta_{+},\label{u4}\\
\widehat{\lambda}+\widehat{\mu}_{1}+\widehat{\mu}_{2}=(\frac{\widehat{\lambda}_{1}}{\gamma}+\widehat{\mu}_{2}\gamma)\frac{1}{\delta_{-}}+(\frac{\widehat{\lambda}_{0}+\widehat{\lambda}_{2}}{\gamma}+\widehat{\mu}_{1}\gamma)\delta_{-}.\label{u5}
\end{eqnarray}
The system \eqref{u1}-\eqref{u5} is a set of five equations for the five unknowns $\gamma$, $\delta_{+}$, $\delta_{-}$, $c_{+}/c_{0}$, $c_{-}/c_{0}$. Using \eqref{u4} in \eqref{u2} and \eqref{u5} in \eqref{u3} we easily realize that
\begin{equation}
\begin{array}{rl}
c_{-}=&\frac{\epsilon_{+}}{\epsilon_{-}}c_{+}.
\end{array}\label{cc}
\end{equation}
where
\begin{equation}
\begin{array}{lr}
\epsilon_{+}=(\frac{\widehat{\lambda}_{2}}{\gamma}+\widehat{\mu}_{1}\gamma)(\frac{\widehat{\lambda}_{0}+2\widehat{\lambda}_{1}}{2\gamma}+\widehat{\mu}_{2}\gamma),&\epsilon_{-}=(\frac{\widehat{\lambda}_{1}}{\gamma}+\widehat{\mu}_{2}\gamma)(\frac{\widehat{\lambda}_{0}+2\widehat{\lambda}_{2}}{2\gamma}+\widehat{\mu}_{1}\gamma).
\end{array}\label{epsi}
\end{equation}
Thus, a natural choice is $c_{-}:=\sqrt{\frac{\epsilon_{+}}{\epsilon_{-}}}$, and $c_{+}:=\sqrt{\frac{\epsilon_{-}}{\epsilon_{+}}}$. 
Note that in the symmetric case, $c_{+}=c_{-}$. Following a procedure similar to the one used in \cite{kness}, after heavy algebraic manipulations (similar to those given in the proof of Lemma \ref{kl2}) we realize that $\gamma:=\rho=\frac{\widehat{\lambda}}{\widehat{\mu}_{1}+\widehat{\mu}_{2}}$, which is crucial for the stability condition, thus, asking $\widehat{\lambda}<\widehat{\mu}_{1}+\widehat{\mu}_{2}$. 

For $\gamma=\rho$, \eqref{u4}, \eqref{u5} have each one from two roots, namely $\delta_{+}=1$ or $\delta_{+}=\frac{\widehat{\lambda}_{2}+\rho^{2}\widehat{\mu}_{1}}{\widehat{\lambda}_{0}+\widehat{\lambda}_{1}+\rho^{2}\widehat{\mu}_{2}}$, and $\delta_{-}=1$ or $\delta_{-}=\frac{\widehat{\lambda}_{1}+\rho^{2}\widehat{\mu}_{2}}{\widehat{\lambda}_{0}+\widehat{\lambda}_{2}+\rho^{2}\widehat{\mu}_{1}}$, respectively. Therefore, the feasible candidates are $\delta_{+}=\frac{\widehat{\lambda}_{2}+\rho^{2}\widehat{\mu}_{1}}{\widehat{\lambda}_{0}+\widehat{\lambda}_{1}+\rho^{2}\widehat{\mu}_{2}}$, $\delta_{-}=\frac{\widehat{\lambda}_{1}+\rho^{2}\widehat{\mu}_{2}}{\widehat{\lambda}_{0}+\widehat{\lambda}_{2}+\rho^{2}\widehat{\mu}_{1}}$. Note, that when $\widehat{\lambda}_{0}>|\rho^{2}(\widehat{\mu}_{2}-\widehat{\mu}_{1})+\widehat{\lambda}_{1}-\widehat{\lambda}_{2}|$, both $0<\delta_{+}<1$, and $0<\delta_{-}<1$. Note that this case along with the assumption that $\rho>max\{\rho_{1},\rho_{2}\}$, where $\rho_{j}=\widehat{\lambda}_{j}/\widehat{\mu}_{j}$, $j=1,2$, corresponds to the so called \textit{strongly pooled} case mentioned in \cite[Theorem 2]{foley} for the standard join the shortest queue model \textbf{without} retrials. Substituting $\gamma$, $\delta_{+}$, $\delta_{-}$ in \eqref{cc} and \eqref{u1} we obtain after some algebra
\begin{displaymath}
\begin{array}{rl}
c_{0}=&\frac{\widehat{\lambda}_{1}+\rho^{2}\widehat{\mu}_{2}}{\widehat{\lambda}(1+\rho)}\sqrt{\frac{\epsilon_{+}}{\epsilon_{-}}}+\frac{\widehat{\lambda}_{2}+\rho^{2}\widehat{\mu}_{1}}{\widehat{\lambda}(1+\rho)}\sqrt{\frac{\epsilon_{-}}{\epsilon_{+}}},\\
\epsilon_{-}=\frac{(\widehat{\lambda}_{1}+\widehat{\mu}_{2}\rho^{2})(\widehat{\lambda}_{0}+2\widehat{\lambda}_{2}+2\widehat{\mu}_{1}\rho^{2})}{2\rho^{2}},&\epsilon_{+}=\frac{(\widehat{\lambda}_{2}+\widehat{\mu}_{1}\rho^{2})(\widehat{\lambda}_{0}+2\widehat{\lambda}_{1}+2\widehat{\mu}_{2}\rho^{2})}{2\rho^{2}}.
\end{array}
\end{displaymath}
Thus for region I ($i\gg1,j\gg1$) our results are summarized in
\begin{equation}
\begin{array}{rl}
p_{i,j}(1)\sim &c\rho^{i+j}\left\{\begin{array}{ll}
\sqrt{\frac{\epsilon_{-}}{\epsilon_{+}}}\left(\frac{\widehat{\lambda}_{2}+\rho^{2}\widehat{\mu}_{1}}{\widehat{\lambda}_{0}+\widehat{\lambda}_{1}+\rho^{2}\widehat{\mu}_{2}}\right)^{j-i},&j>i,\vspace{2mm}\\
\frac{\widehat{\lambda}_{1}+\rho^{2}\widehat{\mu}_{2}}{\widehat{\lambda}(1+\rho)}\sqrt{\frac{\epsilon_{+}}{\epsilon_{-}}}+\frac{\widehat{\lambda}_{2}+\rho^{2}\widehat{\mu}_{1}}{\widehat{\lambda}(1+\rho)}\sqrt{\frac{\epsilon_{-}}{\epsilon_{+}}},&i=j,\vspace{2mm}\\
\sqrt{\frac{\epsilon_{-}}{\epsilon_{+}}}\left(\frac{\widehat{\lambda}_{1}+\rho^{2}\widehat{\mu}_{2}}{\widehat{\lambda}_{0}+\widehat{\lambda}_{1}+\rho^{2}\widehat{\mu}_{1}}\right)^{i-j},&j<i.
\end{array}\right.\\
p_{i,j}(0)=&\frac{\mu}{\lambda+\alpha_{1}+\alpha_{2}}p_{i,j}(1),
\end{array}
\end{equation}
where $c$ is a multiplicative constant.

Next, we consider region II for $i=0$ or $1$, and $j\gg 1$. We should solve
\begin{eqnarray}
(\lambda+\alpha_{1}1_{\{i>0\}}+\alpha_{2})p_{i,j}(0)=\mu p_{i,j}(1),\label{aee1}\vspace{2mm}\\
(\widehat{\lambda}+\widehat{\mu}_{1}+\widehat{\mu}_{2})p_{i,j}(1)=\widehat{\lambda}_{2}p_{i,j-1}(1)+[\widehat{\lambda}_{0}+\widehat{\lambda}_{1}]p_{i-1,j}(1)+\widehat{\mu}_{1}p_{i+1,j}(1)+\widehat{\mu}_{2}p_{i,j+1}(1),\label{aee2}\vspace{2mm}\\
(\lambda(\lambda+\alpha_{2})+\widehat{\mu}_{2})p_{0,j}(1)=\frac{\widehat{\mu}_{1}(\lambda+\alpha_{2})}{\lambda+\alpha_{1}+\alpha_{2}}p_{1,j}(1)+\widehat{\mu}_{2}p_{0,j+1}(1)+\lambda_{2}(\lambda+\alpha_{2})p_{0.j-1}(1),\label{aee3}
\end{eqnarray}
and 
\begin{displaymath}
\begin{array}{c}
p_{i,j}(1)\sim c\sqrt{\frac{\epsilon_{-}}{\epsilon_{+}}}\left(\rho\frac{\widehat{\lambda}_{2}+\rho^{2}\widehat{\mu}_{1}}{\widehat{\lambda}_{0}+\widehat{\lambda}_{1}+\rho^{2}\widehat{\mu}_{2}}\right)^{j}\left(\rho\frac{\widehat{\lambda}_{0}+\widehat{\lambda}_{1}+\rho^{2}\widehat{\mu}_{2}}{\widehat{\lambda}_{2}+\rho^{2}\widehat{\mu}_{1}}\right)^{i},\,i\gg1.
\end{array}
\end{displaymath}
Thus, we seek solutions of \eqref{aee2}-\eqref{aee3} of the form
\begin{displaymath}
\begin{array}{c}
p_{i,j}(1)\sim c\sqrt{\frac{\epsilon_{-}}{\epsilon_{+}}}\left(\rho\frac{\widehat{\lambda}_{2}+\rho^{2}\widehat{\mu}_{1}}{\widehat{\lambda}_{0}+\widehat{\lambda}_{1}+\rho^{2}\widehat{\mu}_{2}}\right)^{j}F(i),
\end{array}
\end{displaymath}
and obtain the following set of equations for $F(i)$:
\begin{equation}
\begin{array}{rl}
(\widehat{\lambda}+\widehat{\mu}_{1}+\widehat{\mu}_{2})F(i)=&(\widehat{\lambda}_{0}+\widehat{\lambda}_{1})F(i-1)+\left(\frac{\widehat{\lambda}_{2}(\widehat{\lambda}_{0}+\widehat{\lambda}_{1}+\rho^{2}\widehat{\mu}_{2})}{\rho(\widehat{\lambda}_{2}+\rho^{2}\widehat{\mu}_{1})}+\frac{\widehat{\mu}_{2}\rho(\widehat{\lambda}_{2}+\rho^{2}\widehat{\mu}_{1})}{\widehat{\lambda}_{0}+\widehat{\lambda}_{1}+\rho^{2}\widehat{\mu}_{2}}\right) F(i)\\&
+\widehat{\mu}_{1}F(i+1),
\end{array}\label{w1}
\end{equation}
\begin{equation}
\begin{array}{rl}
(\lambda(\lambda+\alpha_{2})+\widehat{\mu}_{2})F(0)=&\widehat{\mu}_{1}\frac{\lambda+\alpha_{2}}{\lambda+\alpha_{1}+\alpha_{2}}F(1)+\frac{\widehat{\mu}_{2}\rho(\widehat{\lambda}_{2}+\rho^{2}\widehat{\mu}_{1})}{\widehat{\lambda}_{0}+\widehat{\lambda}_{1}+\rho^{2}\widehat{\mu}_{2}}F(0)\\&+\lambda_{2}(\lambda+\alpha_{2})\frac{\widehat{\lambda}_{0}+\widehat{\lambda}_{1}+\rho^{2}\widehat{\mu}_{2}}{\rho(\widehat{\lambda}_{2}+\rho^{2}\widehat{\mu}_{1})}F(0),\label{w2}
\end{array}
\end{equation}
with $F(i)\sim \left(\frac{\rho(\widehat{\lambda}_{0}+\widehat{\lambda}_{1}+\rho^{2}\widehat{\mu}_{2})}{\widehat{\lambda}_{2}+\rho^{2}\widehat{\mu}_{1}}\right)^{i}$, $i\gg 1$. The solution to \eqref{w1}, \eqref{w2} is $F(i)=x_{+}^{i}+A_{-}x_{-}^{i}$, where
\begin{displaymath}
\begin{array}{rl}
x_{+}=\frac{\rho(\widehat{\lambda}_{0}+\widehat{\lambda}_{1}+\rho^{2}\widehat{\mu}_{2})}{\widehat{\lambda}_{2}+\rho^{2}\widehat{\mu}_{1}},&x_{-}=\frac{(\widehat{\lambda}_{0}+\widehat{\lambda}_{1})(\widehat{\lambda}_{2}+\rho^{2}\widehat{\mu}_{1})}{\widehat{\mu}_{1}\rho(\widehat{\lambda}_{0}+\widehat{\lambda}_{1}+\rho^{2}\widehat{\mu}_{2})},\vspace{2mm}\\
A_{-}=&\frac{x_{+}^{2}(\widehat{\mu}_{1}\frac{\lambda+\alpha_{2}}{\lambda+\alpha_{1}+\alpha_{2}}+\frac{\lambda_{2}(\lambda+\alpha_{2})}{\rho^{2}})-x_{+}(\lambda(\lambda+\alpha_{2})+\widehat{\mu}_{2})+\rho^{2}\widehat{\mu}_{2}}{x_{+}(\lambda(\lambda+\alpha_{2})+\widehat{\mu}_{2})-[(\lambda_{0}+\lambda_{1})(\lambda+\alpha_{2})+\rho^{2}\widehat{\mu}_{2}]-\frac{\lambda_{2}(\lambda+\alpha_{2})}{\rho^{2}}x_{+}^{2}},
\end{array}
\end{displaymath}
where $A_{-}$ is obtained using \eqref{w2}. Moreover, $x_{+}>1$ when $\widehat{\lambda}_{2}+\widehat{\mu}_{1}\rho^{2}<\rho(\widehat{\lambda}_{0}+\widehat{\lambda}_{1}+\widehat{\mu}_{2}\rho^{2})$ (note that this corresponds to the strongly balanced condition mentioned in Theorem \ref{decay}). Furthermore, $x_{-}/x_{+}<1$ when $(\widehat{\lambda}_{2}+\widehat{\mu}_{1}\rho^{2})\sqrt{\frac{\widehat{\lambda}_{0}+\widehat{\lambda}_{1}}{\widehat{\mu}_{1}}}<\rho(\widehat{\lambda}_{0}+\widehat{\lambda}_{1}+\widehat{\mu}_{2}\rho^{2})$. Thus, for $i<j$, $j\gg 1$,
\begin{equation}
\begin{array}{rl}
p_{i,j}(1)\sim&c\sqrt{\frac{\epsilon_{-}}{\epsilon_{+}}}\left(\frac{\rho(\widehat{\lambda}_{2}+\rho^{2}\widehat{\mu}_{1})}{\widehat{\lambda}_{0}+\widehat{\lambda}_{1}+\rho^{2}\widehat{\mu}_{2}}\right)^{i}\left(\frac{\rho(\widehat{\lambda}_{0}+\widehat{\lambda}_{1}+\rho^{2}\widehat{\mu}_{2})}{\widehat{\lambda}_{2}+\rho^{2}\widehat{\mu}_{1}}\right)^{j}[1+A_{-}\left(\frac{x_{-}}{x_{+}}\right)^{i}],\\
p_{i,j}(0)=&\frac{\mu}{\lambda+\alpha_{1}1_{\{i>0\}}+\alpha_{2}}p_{i,j}(1).
\end{array}\label{o1}
\end{equation}

An analogous analysis can be performed for region III ($i\gg1$, $j=0,1$), and results in
\begin{equation}
\begin{array}{rl}
p_{i,j}(1)\sim&c\sqrt{\frac{\epsilon_{+}}{\epsilon_{-}}}\left(\frac{\rho(\widehat{\lambda}_{1}+\rho^{2}\widehat{\mu}_{2})}{\widehat{\lambda}_{0}+\widehat{\lambda}_{2}+\rho^{2}\widehat{\mu}_{1}}\right)^{i}\left(\frac{\rho(\widehat{\lambda}_{0}+\widehat{\lambda}_{2}+\rho^{2}\widehat{\mu}_{1})}{\widehat{\lambda}_{1}+\rho^{2}\widehat{\mu}_{2}}\right)^{j}[1+B_{-}\left(\frac{y_{-}}{y_{+}}\right)^{j}],\,i>j,\,i\gg 1,\\
p_{i,j}(0)=&\frac{\mu}{\lambda+\alpha_{2}1_{\{j>0\}}+\alpha_{1}}p_{i,j}(1),
\end{array}\label{o2}
\end{equation}
where, 
\begin{displaymath}
\begin{array}{rl}
y_{-}=\frac{(\widehat{\lambda}_{0}+\widehat{\lambda}_{2})(\widehat{\lambda}_{1}+\rho^{2}\widehat{\mu}_{2})}{\widehat{\mu}_{2}\rho(\widehat{\lambda}_{0}+\widehat{\lambda}_{2}+\rho^{2}\widehat{\mu}_{1})},&y_{+}=\frac{\rho(\widehat{\lambda}_{0}+\widehat{\lambda}_{2}+\rho^{2}\widehat{\mu}_{1})}{\widehat{\lambda}_{1}+\rho^{2}\widehat{\mu}_{2}}\vspace{2mm}\\
B_{-}=&\frac{y_{+}^{2}(\widehat{\mu}_{2}\frac{\lambda+\alpha_{1}}{\lambda+\alpha_{1}+\alpha_{2}}+\frac{\lambda_{1}(\lambda+\alpha_{1})}{\rho^{2}})-y_{+}(\lambda(\lambda+\alpha_{1})+\widehat{\mu}_{1})+\rho^{2}\widehat{\mu}_{1}}{y_{+}(\lambda(\lambda+\alpha_{1})+\widehat{\mu}_{1})-[(\lambda_{0}+\lambda_{2})(\lambda+\alpha_{1})+\rho^{2}\widehat{\mu}_{1}]-\frac{\lambda_{1}(\lambda+\alpha_{1})}{\rho^{2}}y_{+}^{2}}.
\end{array}
\end{displaymath}
Moreover, $y_{+}>1$ when $\widehat{\lambda}_{1}+\widehat{\mu}_{2}\rho^{2}<\rho(\widehat{\lambda}_{0}+\widehat{\lambda}_{2}+\widehat{\mu}_{1}\rho^{2})$ (note that this corresponds to the strongly balanced condition mentioned in Theorem \ref{decay}), and $y_{-}/y_{+}<1$ when $(\widehat{\lambda}_{1}+\widehat{\mu}_{2}\rho^{2})\sqrt{\frac{\widehat{\lambda}_{0}+\widehat{\lambda}_{2}}{\widehat{\mu}_{2}}}<\rho(\widehat{\lambda}_{0}+\widehat{\lambda}_{2}+\widehat{\mu}_{1}\rho^{2})$.
\section{Numerical results}\label{num}
In the following, we numerically validate and compare the theoretical results obtained based on asymptotic analysis in subsection \ref{taildecay}, with those obtained by the heuristic approach in \ref{heur}. We will see that as $m\to\infty$, i.e., $\min\{N_{1,n},N_{2,n}\}\to\infty$ the expressions derived from  the tail asymptotic analysis coincide with the heuristic approximation expressions. Moreover, we notice that even when $m$ takes small values the difference of the derived expressions is negligible.  

Set $\lambda_{0}=0.15$, $\lambda_{1}=0.05$, $\lambda_{2}=0.01$, $\mu=0.44$. Then, for $\alpha_{1}=0.25$, $\alpha_{2}=0.1$ we investigate whether the results obtained by the asymptotic analysis presented in subsection \ref{decayp} agreed with those obtained in subsection \ref{heur} through our heuristic approach. In this direction, we focus on the absolute difference $|\mathbb{P}(N_{1}=i,N_{2}=j)-\mathbb{P}(min\{i,j\},j-i)|$, where $\mathbb{P}(N_{1}=i,N_{2}=j)=p_{i,j}(0)+p_{i,j}(1)$ obtained with the aid of Lemma \ref{asym} in subsection \ref{heur}, and $\mathbb{P}(min\{i,j\}=m,j-i=l)=\pi_{m,l}(0)+\pi_{m,l}(1)$ with the aid of Theorem \ref{decay} in subsection \ref{taildecay}. Note that under such a setting, $\rho=0.7636>max\{\rho_{1}=0.2545,\rho_{2}=0.1273\}$, and $\widehat{\lambda}_{0}-|\rho^{2}(\widehat{\mu}_{2}-\widehat{\mu}_{1})+\widehat{\lambda}_{1}-\widehat{\lambda}_{2}|=0.0679>0$, and $\rho(\gamma_{2}+\widehat{\lambda}_{0})>\gamma_{1}$, $\rho(\gamma_{1}+\widehat{\lambda}_{0})>\gamma_{2}$ (i.e., the orbit queues are strongly balanced); see Table \ref{table:notation}.
\begin{table}[!t]
	\small
	\caption{Validation of the asymptotic stationary approximations}
	\centering
	\begin{tabular}{ | c | l | c|l|}
		\hline
		\bfseries Indicated values $(i,j)$ & \bfseries Difference\\
		\hline
		$(10,100)$ &$4.0667*10^{-40}$\\
		\hline$(10,200)$ &$2.3404*10^{-81}$ \\
		\hline$(10,300)$ &$1.3469*10^{-122}$\\\hline $(10,400)$&$7.7516*10^{-164}$,\\
		\hline$(100,10)$& $1.2203*10^{-54}$\\
		\hline$(200,10)$& $4.555*10^{-112}$\\
		\hline$(300,10)$& $1.7003*10^{-169}$\\
		\hline$(400,10)$& $6.3466*10^{-227}$\\
		\hline
	\end{tabular} \label{table:notation}	
\end{table}
Our results show that the stationary approximations through the heuristic approach agreed with the results obtained by the asymptotic analysis.
\begin{figure}[H]
\centering
\includegraphics[scale=0.65]{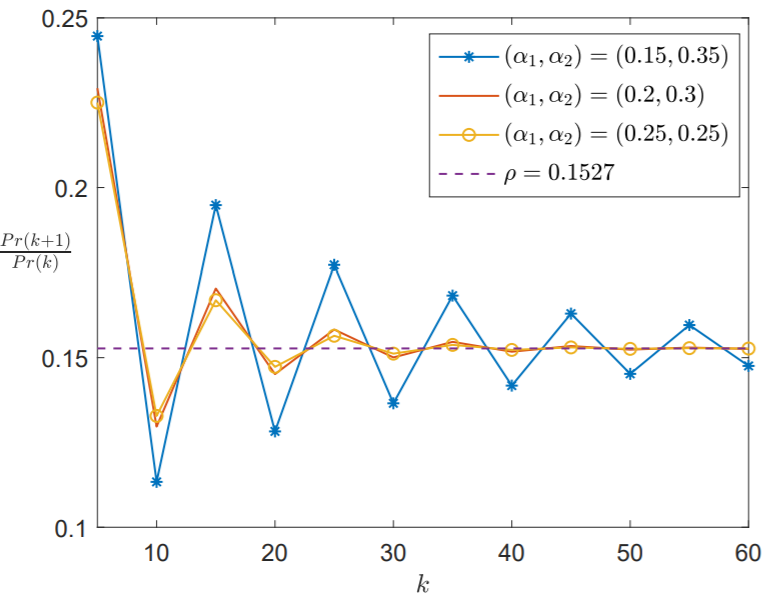}
\caption{The ratio $\frac{Pr(k+1)}{Pr(k)}$ for increasing values of $k=N_{1}+N_{2}$.}
\label{unibf}
\end{figure}
We now focus on the stationary approximations derived through the heuristic approach in subsection \ref{heur}. In Figure \ref{unibf} we observe the ratio $\frac{Pr(k+1)}{Pr(k)}$ for increasing values of the total number of jobs in orbits, i.e., $k=N_{1}+N_{2}$, where $Pr(k)=p_{i,j}(1)+p_{i,j}(0)$ with $k=i+j$, and $\lambda_{0}=0.04$, $\lambda_{1}=\lambda_{2}=0.01$, $\mu=0.44$. It is readily seen that $\lim_{k\to\infty}\frac{Pr(k+1)}{Pr(k)}=\rho=\frac{\widehat{\lambda}}{\widehat{\mu}_{1}+\widehat{\mu}_{2}}$. Moreover, we can observe that the more the difference $\alpha_{1}-\alpha_{2}$ get smaller, the faster this ratio tends to $\rho$. In other words, the asymmetry of the retrial rates affects the asymptotic behaviour.

Moreover, we can also observe that the presence of the dedicated traffic, also heavily affects the way the ratio $\frac{Pr(k+1)}{Pr(k)}$ tends to $\rho$. In the following table, we can observe this trend for $\lambda_{1}=0=\lambda_{2}$, $\lambda_{0}=0.06$, $\mu=0.44$, compared with the case where $\lambda_{1}=0.01=\lambda_{2}$, $\lambda_{0}=0.04$, $\mu=0.44$; see Table \ref{table2}
\begin{table}[!t]
	\small
	\caption{Effect of dedicated traffic}
	\centering
	\begin{tabular}{ |c| c | c|}
		\hline
		$\frac{Pr(k+1)}{Pr(k)}$ for $(\alpha_{1},\alpha_{2})=(0.15,0.35)$& $(\lambda_{0},\lambda_{1},\lambda_{2})=(0.06,0,0)$& $(\lambda_{0},\lambda_{1},\lambda_{2})=(0.04,0.01,0.01)$\\
		\hline
		$k=5$&$0.1526$&$0.2446$\\
		\hline$k=15$&$0.1527$ &$0.1948$ \\
		\hline$k=35$&$0.1527$ &$0.1683$\\\hline $k=55$&$0.1527$&$0.1596$\\
		\hline\hline
		$\frac{Pr(k+1)}{Pr(k)}$ for $(\alpha_{1},\alpha_{2})=(0.25,0.25)$& $(\lambda_{0},\lambda_{1},\lambda_{2})=(0.06,0,0)$& $(\lambda_{0},\lambda_{1},\lambda_{2})=(0.04,0.01,0.01)$\\
		\hline
		$k=5$&$0.1527$&$0.225$\\
		\hline$k=15$&$0.1527$ &$0.1669$ \\
		\hline$k=35$&$0.1527$ &$0.1538$\\\hline $k=55$&$0.1527$&$0.1527$\\
		\hline
	\end{tabular} \label{table2}	
\end{table}
We observe, that in case of no dedicated traffic the convergence of the ratio $\frac{Pr(k+1)}{Pr(k)}$ to $\rho$ is really faster compared with the case where we assumed dedicated traffic. Note that in these scenarios we have assumed that the total arrival rate is fixed and equal to $\lambda=0.06$, so that $\rho=0.1527$ is fixed in both cases.
 
Moreover, we note when $\rho$ increases, the ratio $\frac{Pr(k+1)}{Pr(k)}$ tends to $\rho$ very fast (from $k\geq 10$ the ratio is very close to $\rho$), as shown in Figure \ref{unibfc}, where $\lambda_{0}=0.19$, $\lambda_{1}=\lambda_{1}=0.01$ $\mu=0.44$ (in the case of \textit{no dedicated traffic}, we assumed $\lambda_{1}=\lambda_{2}=0$, $\lambda_{0}=0.21$).
\begin{figure}[H]
\centering
\includegraphics[scale=0.65]{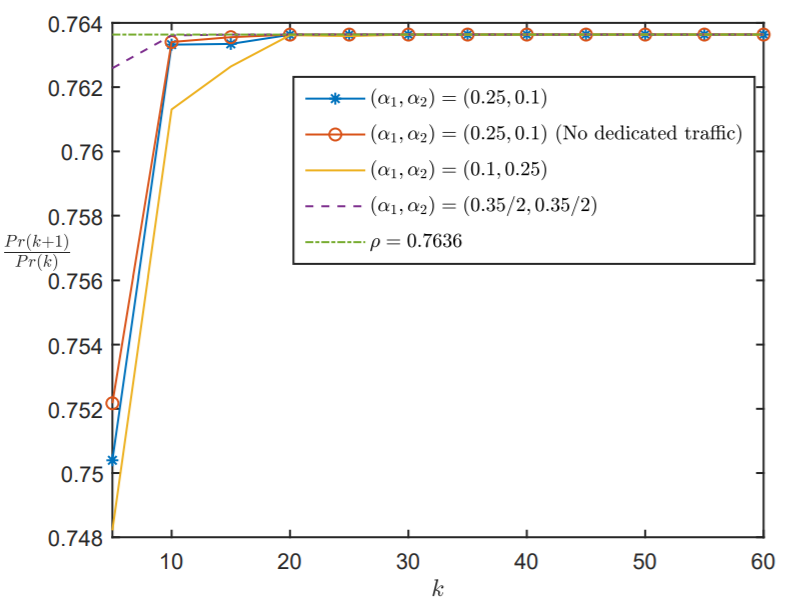}
\caption{The ratio $\frac{Pr(k+1)}{Pr(k)}$ for increasing values of $k=N_{1}+N_{2}$.}
\label{unibfc}
\end{figure}
\section{Conclusion and future work}
In this work, we introduced the generalized join the shortest queue system with retrials and two infinite capacity orbit queues. Three independent streams of jobs, namely a \textit{smart}, and two \textit{dedicated} streams, flow into a single server service
system, which can hold at most one job. Blocked smart jobs join the shortest orbit queue, while blocked jobs from the dedicated streams join their corresponding orbit queue. We remind that the system is described as a Markov modulated two-dimensional random walk, but this kind of modulation allowed for a completely tractable analysis.  

We establish the geometric tail asymptotics along the minimum direction for a fixed value of the difference among orbit queue lengths. Moreover, we apply a heuristic approach to obtain stationary approximations of the number of the joint orbit queue length distribution, which are accurate when one of the orbit queue lengths is large, and thus agreed with the asymptotic analysis. We have
shown that even though the exact solution to the problem may be quite complicated, the asymptotic expansions of $p_{i,j}(k)$ are relatively simple and clearly indicate the dependence of the stationary distribution on the parameters of the model.

We also cope with the stability condition, and based on the stability of the censored chain on the busy states along with extensive simulation experiments, we conjecture that the ergodicity conditions of the censored chain coincide with those of the original model. We postpone the formal proof of this conjecture in a future work. We also leave as a future work the complete investigation of the tail decay rate problem. More precisely, we proved the exact geometric decay under specific conditions given in Theorem \ref{decay} (which are similar to those given in \cite{foley}), but it is still an open problem the investigation of the decay rates when these conditions collapse. 

Finally, simulation experiments indicate that in heavy traffic (i.e., $\rho\to 1$) and when $\lambda_{0}>|\lambda_{1}-\lambda_{2}|$, $\alpha_{1}=\alpha_{2}$, our system exhibits a \textit{state-space collapse} behaviour; see Figure \ref{fgh} (right). It seems that when the first condition fails, our model does not retain this behaviour; see Figure \ref{fgh} (left), where $\lambda_{1}>\lambda_{0}>\lambda_{2}$, and $\lambda_{0}<|\lambda_{1}-\lambda_{2}|$ (i.e., the impact of dedicated traffic for orbit queue 1 is very crucial). Moreover, it also seems that the condition $\alpha_{1}=\alpha_{2}$ is not crucial (see Figure \ref{fghm} (left)).

This means that in heavy-traffic and under these conditions, our load balancing scheme collapses to a one-dimensional line where all the orbit queue lengths are equal. A similar result was proven for the GJSQ system \textbf{without} retrials in the seminal paper \cite{fosc}. State-space collapse occurs because the smart traffic flow dominates over the dedicated traffic flows and ``forces" the two orbit queues to be equal. Thus, it seams that in the heavy traffic regime the sum of the orbit queue lengths can be approximated by the \textit{reference system} described at the end of subsection \ref{taildecay}. The reference system behaves as if there is only a single orbit queue with all the ``servers" (i.e., the retrial servers) pooled together as an aggregated ``server" (in standard JSQ systems this is called \textit{complete resource pooling}). This result implies that under specific conditions, GJSOQ is asymptotically optimal, i.e., heavy-traffic delay
optimal, since the response time in the pooled single-orbit system is stochastically less than that of a typical load balancing
system, i.e. the reference system seams to serve as a lower bound (in the stochastic sense) on the
total orbit queue length of the original model. In a future study we plan to prove formally this justification.
\begin{figure}[H]
\centering
\includegraphics[scale=0.4]{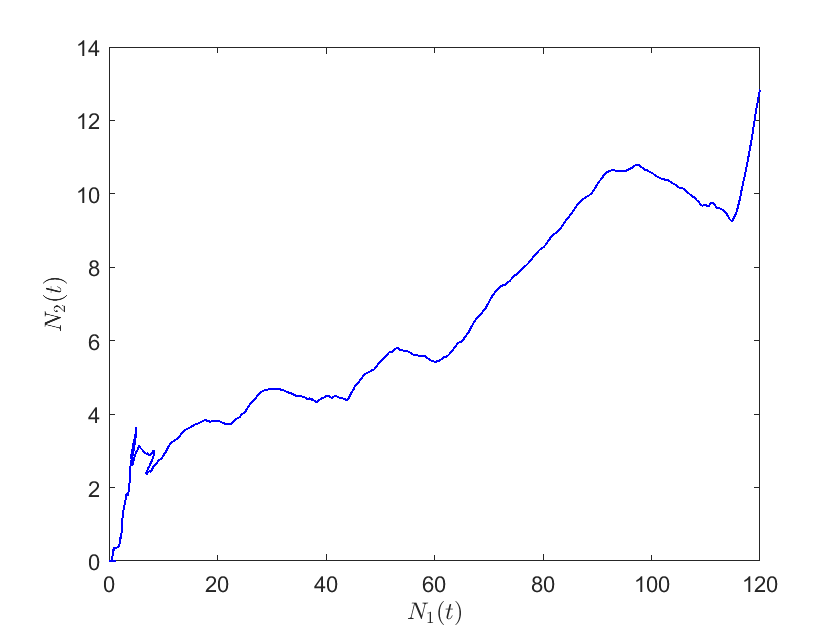}\includegraphics[scale=0.4]{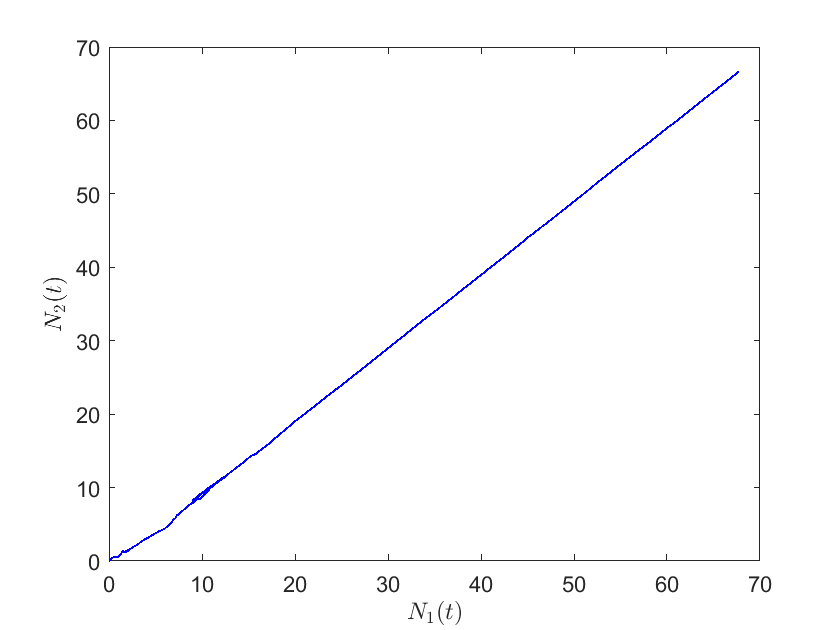}
\caption{Orbit dynamics for $\rho\to 1$ when $\lambda_{0}<|\lambda_{1}-\lambda_{2}|$ (left), and when $\lambda_{0}>|\lambda_{1}-\lambda_{2}|$ (right).}
\label{fgh}
\end{figure}
\section*{Acknowledgements}
I would like to thank the Editors and the reviewers for the insightful remarks, which helped to improve the original exposition.\vspace{0.5cm}\\
\textbf{Competing interests:} The author declare none.

\bibliographystyle{abbrv}
\bibliography{peis_CR}
\appendix
\section{Proof of Lemma \ref{stab}}\label{app1}
Denote by $(M_{i}^{(r_k)},M_{j}^{(r_k)})$ the mean jump vectors in angle $r_{k}$, $k=1,2,$ and by $(M_{i}^{(h)},M_{j}^{(h)})$, $(M_{i}^{(v)},M_{j}^{(v)})$ the mean jump vectors in rays $h$ and $v$, respectively. Then,
\begin{displaymath}
\begin{array}{rlcrl}
M_{i}^{(r_1)}=&\lambda_{1}-\frac{\widehat{\mu}_{1}}{\lambda+\alpha_{1}+\alpha_{2}},&&M_{j}^{(r_1)}=&\lambda_{0}+\lambda_{2}-\frac{\widehat{\mu}_{2}}{\lambda+\alpha_{1}+\alpha_{2}},\\
M_{i}^{(r_2)}=&\lambda_{0}+\lambda_{1}-\frac{\widehat{\mu}_{1}}{\lambda+\alpha_{1}+\alpha_{2}},&&M_{j}^{(r_2)}=&\lambda_{2}-\frac{\widehat{\mu}_{2}}{\lambda+\alpha_{1}+\alpha_{2}},\\
M_{i}^{(h)}=&\lambda_{1}-\frac{\widehat{\mu}_{1}}{\lambda+\alpha_{1}},&&M_{j}^{(h)}=&\lambda_{0}+\lambda_{2},\\
M_{i}^{(v)}=&\lambda_{0}+\lambda_{1},&&M_{j}^{(h)}=&\lambda_{2}-\frac{\widehat{\mu}_{2}}{\lambda+\alpha_{2}}.
\end{array}
\end{displaymath}
The mean jump vector from ray $d$ equals $\frac{1}{2}(M_{i}^{(r_1)}+M_{i}^{(r_2)},M_{j}^{(r_1)}+M_{j}^{(r_2)})$, while $M_{i}^{(r_1)}+M_{j}^{(r_1)}=M_{i}^{(r_2)}+M_{j}^{(r_2)}=\lambda-\frac{\widehat{\mu}_{1}+\widehat{\mu}_{2}}{\lambda+\alpha_{1}+\alpha_{2}}=\frac{\widehat{\lambda}-\widehat{\mu}_{1}-\widehat{\mu}_{2}}{\lambda+\alpha_{1}+\alpha_{2}}$. 

Let us first focus on the sufficient part. We use the well known Foster's criterion \cite[Theorem 2.3.3]{fay}, i.e., the Markov chain $\{X(n);n\geq 0\}$ is ergodic iff there exists a positive function $f(i,j)$ on $\mathbb{Z}_{+}^{2}$, $\epsilon>0$ and a finite set $A\in\mathbb{Z}_{+}^{2}$ such that
\begin{equation}
E(f(i+\theta_{i},j+\theta_{j}))-f(i,j)<-\epsilon,\,\,(x,y)\in\mathbb{Z}_{+}^{2}/A,\label{foster}
\end{equation}
where $(\theta_i, \theta_j)$ is a random vector distributed as a one-step jump of the chain $X(n)$ from the state $(i,j)$. 

We first assume that assertion 1 in Theorem \ref{stab} holds, i.e., $\widehat{\lambda}_{1}<\widehat{\mu}_{1}$, $\widehat{\lambda}_{2}<\widehat{\mu}_{2}$, $\widehat{\lambda}<\widehat{\mu}_{1}+\widehat{\mu}_{2}$. Consider the function $f(i,j)=\sqrt{i^2+j^2}$. Then, simple manipulation yields
\begin{displaymath}
E(f(i+\theta_{i},j+\theta_{j}))-f(i,j)=\frac{xE(\theta_{i})+yE(\theta_{j})}{f(i,j)}+o(1),\,\text{as }i^{2}+j^{2}\to\infty.
\end{displaymath} 
Then, in angle $r_{1}$ we have $E(\theta_{i})=M_{i}^{(r_{1})}=\frac{\widehat{\lambda}_{1}-\widehat{\mu}_{1}}{\lambda+\alpha_{1}+\alpha_{2}}<0$, and $E(\theta_{i})+E(\theta_{j})=M_{i}^{(r_{1})}+M_{j}^{(r_{1})}=\frac{\widehat{\lambda}-\widehat{\mu}_{1}-\widehat{\mu}_{2}}{\lambda+\alpha_{1}+\alpha_{2}}<0$. Moreover, in $r_{1}$
\begin{displaymath}
E(f(i+\theta_{i},j+\theta_{j}))-f(i,j)\leq \frac{j(E(\theta_{i})+E(\theta_{j}))}{j\sqrt{2}}+o(1)<-\epsilon_{1}
\end{displaymath} 
for $\epsilon_{1}>0$ as $i^{2}+j^{2}\to\infty$.

For $(i,j)$ in ray $h$, the condition $\widehat{\lambda}_{1}<\widehat{\mu}_{1}$, $i.e.,$ $M_{i}^{(r_{1})}<0$, implies $M_{i}^{(h)}<0$, since 
\begin{displaymath}
M_{i}^{(h)}=M_{i}^{(r_{1})}+\widehat{\mu}_{1}(\frac{1}{\lambda+\alpha_{1}+\alpha_{2}}-\frac{1}{\lambda+\alpha_{1}})=M_{i}^{(r_{1})}-\frac{\alpha_{2}\widehat{\mu}_{1}}{(\lambda+\alpha_{1})(\lambda+\alpha_{1}+\alpha_{2})}<0.
\end{displaymath}
Therefore,
\begin{displaymath}
E(f(i+\theta_{i},j+\theta_{j}))-f(i,j)\leq \frac{i M_{i}^{(h)}}{i}+o(1)<-\epsilon_{2}
\end{displaymath} 
for $\epsilon_{2}>0$ as $i^{2}\to\infty$. The case for angle $r_{2}$ and ray $v$ are symmetric to those for $r_{1}$ and $h$, respectively, thus, \eqref{foster} is verified similarly. The case for the ray $d$, i.e., $i=j$ is treated similarly. Indeed, having in mind that $(E(\theta_{i}),E(\theta_{j}))=\frac{1}{2}(M_{i}^{(r_1)}+M_{i}^{(r_2)},M_{j}^{(r_1)}+M_{j}^{(r_2)})$, \eqref{foster} reads
\begin{displaymath}
\begin{array}{rl}
E(f(i+\theta_{i},j+\theta_{j}))-f(i,j)=&\frac{i2E(\theta_{i})}{i\sqrt{2}}+o(1)=\frac{M_{i}^{(r_1)}+M_{j}^{(r_1)}}{\sqrt{2}}+o(1)<-\epsilon_{3},\,\text{as }i\to\infty.
\end{array}
\end{displaymath}

Assume now that assertion 2 in Theorem \ref{stab} holds: $\widehat{\lambda}_{1}\geq \widehat{\mu}_{1}$, $\widehat{\lambda}_{2}< \widehat{\mu}_{2}$, and $M_{i}^{(r_1)}M_{j}^{(h)}-M_{i}^{(h)}M_{j}^{(r_1)}<0\Leftrightarrow f_{1}<0$. In such a case the following hold:
\begin{displaymath}
\begin{array}{rl}
M_{i}^{(r_2)}>&M_{i}^{(r_1)}\geq 0,\\
M_{j}^{(r_2)}<&M_{j}^{(r_1)}<0,\\
M_{i}^{(r_1)}M_{j}^{(h)}<&M_{i}^{(h)}M_{j}^{(r_1)}\Rightarrow M_{i}^{(h)}<\frac{M_{i}^{(r_1)}}{M_{j}^{(r_1)}}M_{j}^{(h)}<0\,\,\,\,(\text{since }M_{j}^{(r_1)}<0),\\
M_{j}^{(v)}=&M_{j}^{(r_2)}-\frac{\alpha_{1}\widehat{\mu}_{2}}{(\lambda+\alpha_{1})(\lambda+\alpha_{1}+\alpha_{2})}<0.
\end{array}
\end{displaymath}
Note also that $M_{i}^{(r_1)}M_{j}^{(h)}-M_{i}^{(h)}M_{j}^{(r_1)}<0$ is equivalent to $\lambda_{1}<\mu_{1}^{*}-\frac{\lambda_{0}+\lambda_{2}}{\tilde{\mu}_{2}}(\mu_{1}^{*}-\tilde{\mu}_{1})$, where $\tilde{\mu}_{i}=\frac{\widehat{\mu}_{i}}{\lambda+\alpha_{1}+\alpha_{2}}$, $i=1,2$, and $\mu_{1}^{*}=\frac{\widehat{\mu}_{1}}{\lambda+\alpha_{1}}$. Then,
\begin{displaymath}
\begin{array}{rl}
M_{i}^{(r_1)}+M_{j}^{(r_2)}=\lambda-\tilde{\mu}_{1}-\tilde{\mu}_{2}<\lambda_{1}-\mu_{1}^{*}+\frac{\lambda_{0}+\lambda_{2}}{\tilde{\mu}_{2}}(\mu_{1}^{*}-\tilde{\mu}_{1})<0.
\end{array}
\end{displaymath}
Indeed, 
\begin{displaymath}
\begin{array}{rl}
\lambda-\tilde{\mu}_{1}-\tilde{\mu}_{2}<\lambda_{1}-\mu_{1}^{*}+\frac{\lambda_{0}+\lambda_{2}}{\tilde{\mu}_{2}}(\mu_{1}^{*}-\tilde{\mu}_{1})\Leftrightarrow&(\frac{\lambda_{0}+\lambda_{2}}{\tilde{\mu}_{2}}-1)(\tilde{\mu}_{2}+\tilde{\mu}_{1}-\mu_{1}^{*})<0.
\end{array}
\end{displaymath}
Since $M_{j}^{(r_1)}=\lambda_{0}+\lambda_{2}-\tilde{\mu}_{2}<0$, it suffices to show that $\tilde{\mu}_{2}+\tilde{\mu}_{1}-\mu_{1}^{*}>0$. Indeed, simple calculations yields $\tilde{\mu}_{2}+\tilde{\mu}_{1}-\mu_{1}^{*}=\frac{\lambda\tilde{\mu}_{2}}{\lambda+\alpha_{1}}>0$. Thus, $M_{i}^{(r_1)}+M_{j}^{(r_1)}=M_{i}^{(r_2)}+M_{j}^{(r_2)}<0.$

We construct the function $f(i,j) =\sqrt{ki^{2}+lj^{2}+wij}$ with $k,l>0$, $kl>w^{2}/4$, satisfying \eqref{foster}. We first choose $k,w>0$ such that 
\begin{displaymath}
\begin{array}{rl}
2kM_{i}^{(r_1)} + lM_{j}^{(r_1)}<&0,\\
2kM_{i}^{(h)} + lM_{j}^{(h)}<&0,
\end{array}
\end{displaymath}
or equivalently
\begin{displaymath}
\frac{M_{j}^{(r_1)}}{M_{i}^{(r_1)}}<-\frac{2k}{l}<\frac{M_{j}^{(h)}}{M_{i}^{(h)}},
\end{displaymath}
which is possible under the assumption $M_{i}^{(r_1)}M_{j}^{(h)}-M_{i}^{(h)}M_{j}^{(r_1)}<0$. Next, take $l>w^{2}/(4l)$
sufficiently large. Then due to the inequalities we derived above, we ensure that
\begin{displaymath}
\begin{array}{rl}
2kM_{i}^{(r_1)}+w(M_{i}^{(r_1)}+M_{j}^{(r_1)})+2lM_{j}^{(r_1)}<&0,\\
wM_{i}^{(r_2)}+2kM_{j}^{(r_2)}<&0,\\
2kM_{i}^{(r_2)}+w(M_{i}^{(r_2)}+M_{j}^{(r_2)})+2lM_{j}^{(r_2)}<&0,\\
wM_{i}^{(v)}+2kM_{j}^{(v)}<&0.
\end{array}
\end{displaymath} 
Thus, the function $f(i,j)$ satisfies \eqref{foster}, and using \cite[Lemma 2.3.3.3]{fay}, as $i^{2}+j^{2}\to\infty$,
\begin{displaymath}
E(f(i+\theta_{i},j+\theta_{j}))-f(i,j)=\frac{i(2kE(\theta_{i})+wE(\theta_{j}))+j(wE(\theta_{i})+2lE(\theta_{j}))}{2f(i,j)}+o(1).
\end{displaymath} 
For $i>j>0$, $(E(\theta_{i}),E(\theta_{j}))=(M_{i}^{(r_1)},M_{j}^{(r_1)})$ and 
\begin{displaymath}
E(f(i+\theta_{i},j+\theta_{j}))-f(i,j)\leq \frac{j(2kM_{i}^{(r_1)}+w(M_{i}^{(r_1)}+M_{j}^{(r_1)})+2lM_{j}^{(r_1)})}{j\sqrt{k+l+w}}+o(1)<-\epsilon_{1},
\end{displaymath} 
for some $\epsilon_{1}>0$, as $i^{2}+j^{2}\to\infty$. Similar argumentation proves the validity of assertion 2 of Theorem \ref{stab} for the rest of the cases. The proof of assertion 3 is symmetric to the proof of assertion 2 and further details are omitted. 

We now turn our attention in the necessary part, and show that the chain $\{\tilde{X}(n);n\geq 0\}$ is non-ergodic if none of the assertions 1, 2, 3 holds. To cope with this task, we apply \cite[Theorem 2.2.6]{fay}, which states that for Markov chain $L$ to be non-ergodic, it is sufficient that there exist a function $f (i,j)$ on $\mathbb{Z}^{2}_{+}$ and a constant $C > 0$ such that
\begin{eqnarray}
E(f(i+\theta_{i},j+\theta_{j}))-f(i,j)\geq 0,\label{none}
\end{eqnarray}
for all $(i,j)\in \{(i,j)\mathbb{Z}^{2}_{+}:f(i,j)>C\}$, where the sets $\{(i,j)\in \mathbb{Z}^{2}_{+}:f(i,j)>C\}$ and $\{(i,j)\mathbb{Z}^{2}_{+}:f(i,j)<C\}$ are not empty.

Assume first that assertion 1 does not hold, i.e., $M_{i}^{(r_1)}+M_{j}^{(r_1)}=M_{i}^{(r_2)}+M_{j}^{(r_2)}\geq 0$, and set $f(i,j)=i+j$. If $i,j>0$,
\begin{displaymath}
\begin{array}{c}
E(f(i+\theta_{i},j+\theta_{j}))-f(i,j)=M_{i}^{(r_1)}+M_{j}^{(r_1)}=M_{i}^{(r_2)}+M_{j}^{(r_2)}\geq 0.
\end{array}
\end{displaymath} 
If $i>0,j=0$,
\begin{displaymath}
\begin{array}{c}
E(f(i+\theta_{i},j+\theta_{j}))-f(i,j)=M_{i}^{(h)}+M_{j}^{(h)}=M_{i}^{(r_1)}+M_{j}^{(r_1)}+\frac{\lambda\tilde{\mu}_{2}}{\lambda+\alpha_{1}}>0.
\end{array}
\end{displaymath} 
If $i=0,j>0$,
\begin{displaymath}
\begin{array}{c}
E(f(i+\theta_{i},j+\theta_{j}))-f(i,j)=M_{i}^{(v)}+M_{j}^{(v)}=M_{i}^{(r_1)}+M_{j}^{(r_1)}+\frac{\lambda\tilde{\mu}_{1}}{\lambda+\alpha_{2}}>0.
\end{array}
\end{displaymath} 
Thus, $f(i,j)$ satisfies \eqref{none} and the chain is non-ergodic when assertion 1 does not hold.

Assume now that 
\begin{eqnarray}
M_{i}^{(r_1)}\geq 0,\\
f_{1}\geq 0\Leftrightarrow M_{i}^{(r_1)}M_{j}^{(h)}-M_{i}^{(h)}M_{j}^{(r_1)}\geq 0.\label{vcz}
\end{eqnarray}
We further focus only to the case where $M_{i}^{(r_1)}+M_{j}^{(r_1)}<0$ (since the case $M_{i}^{(r_1)}+M_{j}^{(r_1)}\geq 0$ has been
already considered). Moreover, $M_{j}^{(r_1)}<0$ and assume $M_{i}^{(r_1)}>0$ (we omit the case $M_{i}^{(r_1)}=0$, since it implies that $M_{i}^{(h)}=-\frac{\alpha_{1}\tilde{\mu}_{2}}{\lambda+\alpha_{1}}<0$. With that in mind and by taking into account \eqref{vcz}, we have that $M_{j}^{(r_1)}\geq0$, so that $M_{i}^{(r_1)}+M_{j}^{(r_1)}\geq 0$ and the chain is non-ergodic).

The assumptions $M_{i}^{(r_1)}>0$, $M_{j}^{(r_2)}<0$, implies
\begin{eqnarray}
M_{i}^{(r_2)}>M_{i}^{(r_1)}>0,\\
M_{j}^{(r_2)}<M_{j}^{(r_1)}<0.
\end{eqnarray}
Let $f(i,j)=-M_{j}^{(r_1)}i+M_{i}^{(r_1)}j$. Then, for $i>j>0$,
\begin{eqnarray}
E(f(i+\theta_{i},j+\theta_{j}))-f(i,j)=f(M_{i}^{(r_1)},M_{j}^{(r_1)})=0.
\end{eqnarray}
If $i>0,j=0$,
\begin{eqnarray}
E(f(i+\theta_{i},j+\theta_{j}))-f(i,j)=f(M_{i}^{(h)},M_{j}^{(h)})\geq 0,
\end{eqnarray}
due to \eqref{vcz}. If $j>i>0$, we know that since $M_{i}^{(r_1)}+M_{j}^{(r_2)}<0$, and $M_{i}^{(r_2)}>M_{i}^{(r_1)}$ we have
\begin{displaymath}
\begin{array}{rl}
E(f(i+\theta_{i},j+\theta_{j}))-f(i,j)=&f(M_{i}^{(r_2)},M_{j}^{(r_2)})=-M_{i}^{(r_2)}M_{j}^{(r_1)}+M_{i}^{(r_1)}M_{j}^{(r_2)}\\=&-M_{i}^{(r_2)}M_{j}^{(r_1)}+M_{i}^{(r_1)}(M_{j}^{(r_2)}+M_{i}^{(r_2)}-M_{i}^{(r_2)})\\
=&-M_{i}^{(r_2)}M_{j}^{(r_1)}+M_{i}^{(r_1)}(M_{j}^{(r_1)}+M_{i}^{(r_1)}-M_{i}^{(r_2)})\\=&-(M_{j}^{(r_1)}+M_{i}^{(r_1)})(M_{i}^{(r_2)}-M_{i}^{(r_1)})>0.
\end{array}
\end{displaymath}
Finally, for $i=0,j>0$,
\begin{displaymath}
\begin{array}{l}
E(f(i+\theta_{i},j+\theta_{j}))-f(i,j)=f(M_{i}^{(v)},M_{j}^{(v)})=-M_{i}^{(v)}M_{j}^{(r_1)}+M_{i}^{(r_1)}M_{j}^{(v)}\\=-M_{j}^{(r_1)}(M_{i}^{(v)}-M_{i}^{(r_{2})})+M_{i}^{(r_1)}(M_{j}^{(v)}-M_{j}^{(r_2)})+f(M_{i}^{(r_2)},M_{j}^{(r_2)})\\
=f(M_{i}^{(r_2)},M_{j}^{(r_2)})-\tilde{\mu}_{1}(M_{j}^{(r_1)}+M_{i}^{(r_1)}\frac{\alpha_{2}}{\lambda+\alpha_{2}})>f(M_{i}^{(r_2)},M_{j}^{(r_2)})-\tilde{\mu}_{1}(M_{j}^{(r_1)}+M_{i}^{(r_1)})>0.
\end{array}
\end{displaymath}
Therefore, in case assertion 2 does not hold, the chain is non-ergodic. The investigation of the case $M_{j}^{(r_2)}\geq 0$, $f_{2}\geq 0\Leftrightarrow M_{j}^{(r_2)}M_{i}^{(v)}-M_{i}^{(v)}M_{j}^{(r_2)}\geq 0$, i.e., the violation of assertion 3, is symmetric to the previous one, and further details are omitted.
\section{On the decay rate of the \textit{reference system}}\label{app0}
We consider the \textit{reference system} described at the end of subsection \ref{taildecay} and show that the tail decay rate of the stationary orbit queue length distribution equals $\rho$. Let $N(t)$ be the number of orbiting jobs at time $t$ in this \textit{reference system}. Then, $L(t)=\{(N(t),C(t));t\geq 0\}$ is the continuous Markov chain with state space $\mathbb{N}_{0}\times\{0,1\}$. The process $\{L(t);t\geq 0\}$ is a homogeneous QBD process with transition rate 
\begin{displaymath}
\mathbb{Q}=\begin{pmatrix}
\Lambda_{0}^{(0)}&\Lambda_{1}&&&&\\
\Lambda_{-1}^{(0)}&\Lambda_{0}^{(1)}&\Lambda_{1}&&&\\
&\Lambda_{-1}&\Lambda_{0}&\Lambda_{1}&&\\
&&\Lambda_{-1}&\Lambda_{0}&\Lambda_{1}&\\
&&&\ddots&\ddots&\ddots\\
\end{pmatrix},
\end{displaymath}
where
\begin{displaymath}
\begin{array}{c}
\Lambda_{0}^{(0)}=\begin{pmatrix}
-\lambda&\lambda\\
\mu&-(\lambda+\mu)
\end{pmatrix},\,\Lambda_{1}=\begin{pmatrix}
0&0\\
0&\lambda
\end{pmatrix},\,\Lambda_{-1}^{(0)}=\begin{pmatrix}
0&\alpha_{1}\\
0&0
\end{pmatrix},\,\Lambda_{-1}=\begin{pmatrix}
0&\alpha_{1}+\alpha_{2}\\
0&0
\end{pmatrix}\\
\Lambda_{0}^{(1)}=\begin{pmatrix}
-(\lambda+\alpha_{1})&\lambda\\
\mu&-(\lambda+\mu)
\end{pmatrix},\,\Lambda_{0}=\begin{pmatrix}
-(\lambda+\alpha_{1}+\alpha_{2})&\lambda\\
\mu&-(\lambda+\mu)
\end{pmatrix}.
\end{array}
\end{displaymath}
Under usual assumptions, the transition rate matrix $\mathbb{Q}$ has a single communicating class. Let $\Lambda=\Lambda_{-1}+\Lambda_{0}+\Lambda_{1}:=\begin{pmatrix}
-(\lambda+\alpha_{1}+\alpha_{2})&\lambda+\alpha_{1}+\alpha_{2}\\
\mu&-\mu
\end{pmatrix}$, and $\mathbf{u}:=(u_{0},u_{1})$ its stationary vector. Then, $\mathbf{u}\Lambda=\mathbf{0}$, $\mathbf{u}\mathbf{1}=1$, yields $u_{0}=\mu$, $u_{1}=\lambda+\alpha_{1}+\alpha_{2}$ (remind that we have assumed $\lambda+\alpha_{1}+\alpha_{2}+\mu=1$). Then, following \cite{neuts}, $\{L(t);t\geq 0\}$ is stable if and only if $\mathbf{u}\Lambda_{1}\mathbf{1}<\mathbf{u}\Lambda_{-1}\mathbf{1}$. Straightforward calculations indicate that the stability condition for the reference system is $\widehat{\lambda}<\widehat{\mu}_{1}+\widehat{\mu}_{2}\Rightarrow\rho<1$.

Note that $\Lambda_{1}=\mathbf{a}.\mathbf{z}$, where $\mathbf{a}=(0,\lambda)^{T}$, $\mathbf{z}=(0,1)$. Thus, by applying the matrix geometric method \cite{neuts} we are able to derive the equilibrium distribution of $\{L(t);t\geq 0\}$, for which the rate matrix is obtained explicitly. Moreover, the Perron-Frobenius eigenvalue of the rate matrix, i.e., the tail decay rate $\zeta$, is given as the unique root in $(0, 1)$ of the determinant equation
 \begin{displaymath}
 \begin{array}{c}
 det(\Lambda_{1}+\Lambda_{0}\zeta+\Lambda_{-1}\zeta^{2})=0.
 \end{array}
\end{displaymath}  
In our case $det(\Lambda_{1}+\Lambda_{0}\zeta+\Lambda_{-1}\zeta^{2})=0\Rightarrow \zeta(1-\zeta)(\zeta(\widehat{\mu}_{1}+\widehat{\mu}_{2})-\widehat{\lambda})=0$. Therefore, $\zeta=\rho$.
\section{Proof of Lemma \ref{kl1}}\label{app2}
Condition \eqref{s1} reads as follows
\begin{equation}
\begin{array}{rl}
x_{l}(z)=&x_{l-1}(z)(\frac{\widehat{\mu}_{1}}{\lambda+\alpha_{1}+\alpha_{2}}+(\lambda_{0}+\lambda_{2})z)+x_{l}(z)(\alpha_{1}+\alpha_{2}+\frac{\lambda\mu}{\lambda+\alpha_{1}+\alpha_{2}})\\&+x_{l+1}(z)(\frac{\widehat{\mu}_{2}}{\lambda+\alpha_{1}+\alpha_{2}}z^{-1}+\lambda_{1}),\,l\leq -2,\label{d1}
\end{array}
\end{equation}
\begin{equation}
\begin{array}{rl}
x_{-1}(z)=&x_{-2}(z)(\frac{\widehat{\mu}_{1}}{\lambda+\alpha_{1}+\alpha_{2}}+(\lambda_{0}+\lambda_{2})z)+x_{-1}(z)(\alpha_{1}+\alpha_{2}+\frac{\lambda\mu}{\lambda+\alpha_{1}+\alpha_{2}})\\&+x_{0}(z)(\frac{\widehat{\mu}_{2}}{\lambda+\alpha_{1}+\alpha_{2}}z^{-1}+\lambda_{1}+\frac{\lambda_{0}}{2}),\label{d2}
\end{array}
\end{equation}
\begin{equation}
\begin{array}{rl}
x_{0}=&x_{-1}(z)(\frac{\widehat{\mu}_{1}}{\lambda+\alpha_{1}+\alpha_{2}}+(\lambda_{0}+\lambda_{2})z)+x_{0}(\alpha_{1}+\alpha_{2}+\frac{\lambda\mu}{\lambda+\alpha_{1}+\alpha_{2}})\\&+x_{1}(z)(\frac{\widehat{\mu}_{2}}{\lambda+\alpha_{1}+\alpha_{2}}+(\lambda_{0}+\lambda_{1})z),\label{d3}
\end{array}
\end{equation}
\begin{equation}
\begin{array}{rl}
x_{1}(z)=&x_{0}(z)(\frac{\widehat{\mu}_{1}}{\lambda+\alpha_{1}+\alpha_{2}}z^{-1}+\frac{\lambda_{0}}{2}+\lambda_{2})+x_{1}(z)(\alpha_{1}+\alpha_{2}+\frac{\lambda\mu}{\lambda+\alpha_{1}+\alpha_{2}})\\&+x_{2}(z)(\frac{\widehat{\mu}_{2}}{\lambda+\alpha_{1}+\alpha_{2}}+(\lambda_{1}+\lambda_{0})z),\label{d22}
\end{array}
\end{equation}
\begin{equation}
\begin{array}{rl}
x_{l}(z)=&x_{l-1}(z)(\frac{\widehat{\mu}_{1}}{\lambda+\alpha_{1}+\alpha_{2}}z^{-1}+\lambda_{2})+x_{l}(z)(\alpha_{1}+\alpha_{2}+\frac{\lambda\mu}{\lambda+\alpha_{1}+\alpha_{2}})\\&+x_{l+1}(z)(\frac{\widehat{\mu}_{2}}{\lambda+\alpha_{1}+\alpha_{2}}+(\lambda_{0}+\lambda_{1})z),\,l\geq 2,\label{d4}
\end{array}
\end{equation}
and
\begin{equation}
\begin{array}{rl}
y_{l}(z)=&y_{l-1}(z)(\frac{\widehat{\mu}_{2}}{\lambda+\alpha_{1}+\alpha_{2}}z^{-1}+\lambda_{1})+y_{l}(z)(\alpha_{1}+\alpha_{2}+\frac{\lambda\mu}{\lambda+\alpha_{1}+\alpha_{2}})\\&+y_{l+1}(z)(\frac{\widehat{\mu}_{1}}{\lambda+\alpha_{1}+\alpha_{2}}z^{-1}+(\lambda_{0}+\lambda_{2})z),\,l\leq -1,\label{d5}
\end{array}
\end{equation}
\begin{equation}
\begin{array}{rl}
y_{0}(z)=&y_{-1}(z)(\frac{\widehat{\mu}_{2}}{\lambda+\alpha_{1}+\alpha_{2}}z^{-1}+\frac{\lambda_{0}}{2}+\lambda_{1})+y_{0}(\alpha_{1}+\alpha_{2}+\frac{\lambda\mu}{\lambda+\alpha_{1}+\alpha_{2}})\\&+y_{1}(z)(\frac{\widehat{\mu}_{1}}{\lambda+\alpha_{1}+\alpha_{2}}z^{-1}+\frac{\lambda_{0}}{2}+\lambda_{2}),\label{d33}
\end{array}
\end{equation}
\begin{equation}
\begin{array}{rl}
y_{l}(z)=&y_{l-1}(z)(\frac{\widehat{\mu}_{2}}{\lambda+\alpha_{1}+\alpha_{2}}+(\lambda_{0}+\lambda_{1})z)+y_{l}(z)(\alpha_{1}+\alpha_{2}+\frac{\lambda\mu}{\lambda+\alpha_{1}+\alpha_{2}})\\&+y_{l+1}(z)(\frac{\widehat{\mu}_{1}}{\lambda+\alpha_{1}+\alpha_{2}}z^{-1}+\lambda_{2}),\,l\geq 1.\label{d6}
\end{array}
\end{equation}

Then, for $\eta_{min}<\eta_{max}$, $\theta_{min}<\theta_{max}$, a general solution for $\mathbf{x}$ is given by,
\begin{eqnarray}
x_{l}(z)=\left\{\begin{array}{ll}
k_{1}\eta_{max}^{-l}+k_{2}\eta_{min}^{-l},&l\leq-2,\\
k_{3}\theta_{min}^{l}+k_{4}\theta_{max}^{l},&l\geq 2,
\end{array}\right.
\end{eqnarray}
where $k_{1},k_{2},k_{3},k_{4}$, are constants to be determined by \eqref{d2}-\eqref{d22}. Similarly,
\begin{eqnarray}
y_{l}(z)=\left\{\begin{array}{ll}
s_{1}\eta_{min}^{l}+s_{2}\eta_{max}^{l},&l\leq-1,\\
s_{3}\theta_{max}^{-l}+s_{4}\theta_{min}^{-l},&l\geq 1,
\end{array}\right.\label{yl}
\end{eqnarray}
where $s_{1},s_{2},s_{3},s_{4}$ are constants, and $\eta_{min}$, $\eta_{max}$, $\theta_{min}$, $\theta_{max}$ are obtained in the way of solving the difference equations \eqref{d1}-\eqref{d6}, which result in \eqref{pol1}, \eqref{pol2}. 

Following \cite{limiya}, the condition \eqref{s2} requires \eqref{ss1}, $\mathbf{x}(z)$ to be constructed using $\eta_{min}$, $\theta_{min}$, and $\mathbf{y}(z)$ to be constructed using $\eta_{max}^{-1}$, $\theta_{max}^{-1}$, so that $k_{1}=k_{4}=s_{1}=s_{4}=0$. Next we focus on \eqref{d5}, \eqref{d6} and substitute \eqref{yl} to realize that $s_{2}=s_{3}=y_{0}$, so that 
\begin{displaymath}
y_{-1}(z)=y_{0}\eta_{max}^{-1},\,y_{1}(z)=y_{0}\theta_{max}^{-1}.
\end{displaymath}
Then by substituting in \eqref{d33}, the resulting equation is $f(z)=1$. Using a similar argumentation and \eqref{d1}, \eqref{d2} yield $x_{-1}(z)=c_{2}\eta_{min}$, with 
\begin{displaymath}
c_{2}=\frac{\widehat{\mu}_{2}z^{-1}+\widehat{\lambda}_{1}+\frac{\widehat{\lambda}_{0}}{2}}{\widehat{\mu}_{2}z^{-1}+\widehat{\lambda}_{1}}x_{0}.
\end{displaymath}
Similarly, by using \eqref{d22}, \eqref{d4}, $x_{1}(z)=c_{3}\theta_{min}$, where now,
\begin{displaymath}
c_{3}=\frac{\widehat{\mu}_{1}z^{-1}+\widehat{\lambda}_{2}+\frac{\widehat{\lambda}_{0}}{2}}{\widehat{\mu}_{1}z^{-1}+\widehat{\lambda}_{2}}x_{0}.
\end{displaymath}
Then, substituting back in \eqref{d3} yields again $f(z)=1$, and the conditions are necessary and sufficient. Equation \eqref{inva} is obtained straightforwardly from the above results. 
\section{Proof of Lemma \ref{kl2}}\label{appro}
We show that $z=\rho^{-2}$ is the unique solution of $f(z)=1$, $z>1$. For convenience let
\begin{displaymath}
\begin{array}{lr}
\tilde{\mu}_{i}=\frac{\widehat{\mu}_{i}}{\widehat{\lambda}+\widehat{\mu}_{1}+\widehat{\mu}_{2}},\,i=1,2,&\tilde{\lambda}_{i}=\frac{\widehat{\lambda}_{i}}{\widehat{\lambda}+\widehat{\mu}_{1}+\widehat{\mu}_{2}},\,i=0,1,2,\\
\xi_{1}=2(\tilde{\mu}_{1}z^{-1}+\tilde{\lambda}_{2})+\tilde{\lambda}_{0},&\xi_{2}=2(\tilde{\mu}_{2}z^{-1}+\tilde{\lambda}_{1})+\tilde{\lambda}_{0}.
\end{array}
\end{displaymath}
Under such a setting, the following identities are derived straightforwardly: 
\begin{displaymath}
\begin{array}{lr}
\tilde{\mu}_{1}z^{-1}+\tilde{\lambda}_{2}=\frac{1}{2}(\xi_{1}-\tilde{\lambda}_{0}),\,&\tilde{\mu}_{2}z^{-1}+\tilde{\lambda}_{1}=\frac{1}{2}(\xi_{2}-\tilde{\lambda}_{0}),\\
\tilde{\mu}_{1}+(\tilde{\lambda}_{2}+\tilde{\lambda}_{0})z=\frac{1}{2}(\xi_{1}+\tilde{\lambda}_{0})z,\,&\tilde{\mu}_{2}+(\tilde{\lambda}_{1}+\tilde{\lambda}_{0})z=\frac{1}{2}(\xi_{2}+\tilde{\lambda}_{0})z.
\end{array}
\end{displaymath}
Then, $f(z)=1$ is rewritten as
\begin{displaymath}
\begin{array}{rl}
\tilde{\lambda}_{0}(\xi_{1}+\xi_{2}-2\tilde{\lambda}_{0})=&\xi_{2}(\xi_{1}-\tilde{\lambda}_{0})\sqrt{1-(\xi_{1}+\tilde{\lambda}_{0})(\xi_{2}-\tilde{\lambda}_{0})z}+\xi_{1}(\xi_{2}-\tilde{\lambda}_{0})\sqrt{1-(\xi_{2}+\tilde{\lambda}_{0})(\xi_{1}-\tilde{\lambda}_{0})z}.
\end{array}
\end{displaymath}
Taking squares at both sides and rearrange terms yields
\begin{equation}
\begin{array}{l}
\tilde{\lambda}_{0}^{2}(\xi_{1}+\xi_{2}-2\tilde{\lambda}_{0})^{2}-\xi_{1}^{2}(\xi_{2}-\tilde{\lambda}_{0})^{2}-\xi_{2}^{2}(\xi_{1}-\tilde{\lambda}_{0})^{2}\\=-\xi_{1}^{2}(\xi_{2}-\tilde{\lambda}_{0})^{2}(\xi_{1}-\tilde{\lambda}_{0})(\xi_{2}+\tilde{\lambda}_{0})z-\xi_{2}^{2}(\xi_{1}-\tilde{\lambda}_{0})^{2}(\xi_{2}-\tilde{\lambda}_{0})(\xi_{1}+\tilde{\lambda}_{0})z\\
+2\xi_{1}\xi_{2}(\xi_{1}-\tilde{\lambda}_{0})(\xi_{2}-\tilde{\lambda}_{0})\sqrt{1-(\xi_{1}-\tilde{\lambda}_{0})(\xi_{2}+\tilde{\lambda}_{0})z}\sqrt{1-(\xi_{2}-\tilde{\lambda}_{0})(\xi_{1}+\tilde{\lambda}_{0})z}.
\end{array}\label{bq}
\end{equation}
Having in mind that $\xi_{1}-\tilde{\lambda}_{0}>0$, $\xi_{2}-\tilde{\lambda}_{0}>0$, \eqref{bq} is rewritten after manipulations as
\begin{displaymath}
\begin{array}{l}
2(2\tilde{\lambda}_{0}^{2}-\xi_{1}\xi_{2})+(\xi_{1}^{2}(\xi_{2}^{2}-\tilde{\lambda}_{0}^{2})+\xi_{2}^{2}(\xi_{1}^{2}-\tilde{\lambda}_{0}^{2}))z=2\xi_{1}\xi_{2}\sqrt{1+2(\tilde{\lambda}_{0}^{2}-\xi_{1}\xi_{2})z+(\tilde{\lambda}_{0}^{2}-\xi_{1}^{2})(\tilde{\lambda}_{0}^{2}-\xi_{2}^{2})z^{2}}.
\end{array}
\end{displaymath}
Taking again squares at both sides, we obtain after lengthy but straightforward manipulations the following equation
\begin{displaymath}
\begin{array}{c}
((\xi_{1}+\xi_{2})^{2}z-4)((\xi_{1}-\xi_{2})^{2}\tilde{\lambda}_{0}^{2}z+4(\xi_{1}\xi_{2}-\tilde{\lambda}_{2}))=0.
\end{array}
\end{displaymath}
Note that $\xi_{1}\xi_{2}-\tilde{\lambda}_{2}=2[2(\tilde{\mu}_{1}z^{-1}+\tilde{\lambda}_{2})(\tilde{\mu}_{2}z^{-1}+\tilde{\lambda}_{1})+\tilde{\lambda}_{0}((\tilde{\mu}_{1}+\tilde{\mu}_{1})z^{-1}+\tilde{\lambda}_{2}+\tilde{\lambda}_{1})]>0$, and thus, $f(z)=1$ if and only if
\begin{displaymath}
(\xi_{1}+\xi_{2})^{2}z=4.
\end{displaymath}
It is easily seen that this quadratic equation with respect to $z$ has two solutions, i.e., $z=1$ and $z=\rho^{-2}$. Thus, the only possible solution of $f(z)=1$ such that $z>1$ is $\rho^{-2}$. For $z=\rho^{-2}$ we can easily derive the zeros of \eqref{pol1}, \eqref{pol2}, as given in \eqref{rot1}, \eqref{rot2}, respectively.\vspace{1cm}\\


\end{document}